\newcommand{\bP}{\mathbb{P}}
\newcommand{\bR}{\mathbb{R}}
\newcommand{\bS}{\mathbb{S}}
\newcommand{\cF}{\mathcal{F}}
\newcommand{\bfX}{\mathbf{X}}
\theoremstyle{definition}
\newtheorem{definition}{Definition}[section]
\newtheorem{lemma}{Lemma}[section]
\newtheorem{thm}{Theorem}[section]
\theoremstyle{remark}
\newtheorem{remark}{Remark}[section]
\newcommand{\I}{\mathds 1}
\newcommand{\R}{\mathds R}
\newcommand{\1}{\mathbbm{1}}
\def\d{{\rm d}}
\def\<{\langle}
\def\>{\rangle}
 \def\ss{\sqrt}
\def\bb{\beta}
\newcommand{\bZ}{\mathbb{Z}}
\def\R{\mathbb R}   \def\ss{\sqrt} 
  \def\vv{\varepsilon} 
\def\<{\langle} \def\>{\rangle}  
  \def\nn{\nabla}  
\def\d{\text{\rm{d}}} \def\bb{\beta}  
  \def\si{\sigma} 
 \def\beq{\begin{equation}}  \def\F{\mathscr F}
\def\e{\text{\rm{e}}}  \def\OO{\Omega}  
 \def\P{\mathbb P}
  \def\ll{\lambda}
\def\E{\mathbb E}
\def\to{\rightarrow}
\def\8{\infty}\def\3{\triangle}
\def\1{\lesssim}
\renewcommand{\bar}{\overline}
\renewcommand{\hat}{\widehat}
\renewcommand{\tilde}{\widetilde}
\title{The random periodic solutions for McKean-Vlasov stochastic differential equations
}
\author{
 Jianhai Bao \\
  Center for Applied Mathematics\\
  Tianjin University\\
 300072 Tianjin, P.R. China\\ 
  \texttt{jianhaibao@tju.edu.cn}  \\
   \AND
   Gon\c calo Dos Reis \\
  School of Mathematics, University of Edinburgh, Peter Guthrie Tait Road,  EH9 3FD, UK\\ 
  and\\
Center for Mathematics and Applications (NOVA Math), 2829-516 Caparica, PT \\
  \texttt{G.dosReis@ed.ac.uk}  \\
    \AND
  Yue Wu \\
  Department of Mathematics and Statistics\\
  University of Strathclyde\\
 Glasgow,  G1 1XH, UK\\ 
  \texttt{yue.wu@strath.ac.uk}   
}
\begin{document}
\maketitle

\begin{abstract}
In this paper,  we study well-posedness of random periodic solutions of stochastic  differential equations (SDEs) of McKean-Vlasov type driven by a two-sided Brownian motion, where the random periodic behaviour is characterised by the equations' long-time behaviour. Given the well-known connection between McKean-Vlasov SDEs and interacting particle systems,  we show  propagation of chaos and that the key properties of the interacting particle systems recover those of the McKean-Vlasov SDEs in the particle limit.  All results in the present work are shown under two settings: fully and partially dissipative case. Each setting has its challenges and limitations.  For instance, weakening full dissipativity to partial dissipativity demands stronger structural assumptions on the equations' dynamics and yields random periodic behaviour in the weak sense instead of pathwise sense (as in the full dissipativity case). 
The proof mechanisms are close but fundamentally different.
\end{abstract}

\keywords{
Random periodic solution 
\and McKean-Vlasov stochastic differential equation 
\and propagation 
of chaos
\and full dissipativity
\and partial dissipativity}


\newpage
\section{Introduction}

The invariant probability measure (PIM) has been well studied in the field of stochastic analysis \cite{da1996ergodicity} as it provides a way to understand the long-time behaviour of natural random phenomena  in the distribution sense. 
For more details on existence and uniqueness of IPMs for Markov processes, we refer to the excellent lecture notes \cite{hairer2006ergodic}. In the pathwise sense, it has been proved that,  under sufficient conditions,  a stationary trajectory of an SDE  admits a unique IPM \cite{arnold1995random}. Recently,  IPM was even adopted in the machine learning context for approximating the minimal weights of parameters in neural networks \cite{bottou2018optimization,chen2024mean}. Periodic measures (PMs), which can be thought of as a generalisation of the IPM, is also important and sometimes affects  ergodicity of the system under consideration. For instance, for a one-dimensional probabilistic cellular automation (PCA), a periodic measure of the period two may appear which leads to a unique but non-ergodic IPM  \cite{chassaing2011non}. However, there was a lack of literature addressing   existence/properties of periodic measures in a random dynamical system until very recently \cite{feng2020random}, after the introduction of the concept of the \textit{random periodic path} and \textit{pathwise random periodic solution} of cocycles/semiflows \cite{zhao2009random,feng2011pathwise}. These concepts generalise the definition of both the \textit{stationary trajectory} and the \textit{deterministic periodic trajectory} \cite{zhao2009random,wu2021random} and allow rigorous lens to understand physically interesting problems of certain random phenomena with (pseudo-)periodic patterns \cite{chekroun2011stochastic}. In recent works e.g. \cite{feng2011pathwise,feng2012random,feng2016anticipating,wu2018random},    existence of random periodic solutions for stochastic dynamical systems generated by  SDEs  and stochastic partial differential equations (SPDEs) was addressed.  For such works, a uniformly dissipative condition was generally imposed on the drift coefficient to guarantee its uniqueness \cite{feng2017numerical,wu2021random,wu2021galerkin}. In the past few years, the uniformly dissipative condition   was relaxed to be 
the counterpart which is 
dissipative in the  long distance or dissipative on average \cite{bao2022random}. 
    
An interesting bidirectional relation between the random periodic trajectory and the IPM was revealed in \cite{feng2020random}: the law of the random periodic trajectory is shown to be periodically preserved by some map which, in turn, provides a non-trivial characterisation of the statistical behaviour of the system; conversely, from a periodic measure one can construct an enlarged probability space and a random periodic process whose law is the periodic measure (this is akin to the Lions lift \cite{carmona2018probabilistic}). This fact provides a strategy to study random periodic trajectory of S(P)DEs and its law,  one after the other. It is however not capable of the class of McKean-Vlasov SDEs because  such an SDE has its law involved in its drift and diffusion coefficients:
\begin{equation}\label{E1}
\d X_{s,t}=b_t(X_{s,t},\mathscr L_{X_{s,t}})\,\d t+\si_t(X_{s,t},\mathscr L_{X_{s,t}})\,\d W_t, \quad t\ge s\in\R
\end{equation}with the initial condition $X_{s,s}=\xi$,  an integrable and $\cF_s$-measurable $\xi$, where  the maps 
$$b:\R\times\R^d\times\mathscr P_2(\R^d)\to\R^d,\qquad \sigma:\R\times\R^d\times\mathscr P_2(\R^d)\to\R^d\otimes\R^m$$
are measurable.  
One has to handle both of the random periodic solution and its law simultaneously. In this work, we will tackle the challenge and study \textit{random periodic solutions}  for the class of McKean-Vlasov SDEs\footnote{In the literature, due to the preferred interpretation of the processes involved, the McKean-Vlasov SDE is also named as the nonlinear SDE (e.g. \cite{kolokoltsov2010nonlinear}) or the mean-field SDE (e.g. \cite{carmona2018probabilistic}) or the  distribution-dependent SDE (e.g. \cite{huang2021distribution}). In some occasions, they are  used interchangeably. }.

McKean-Vlasov SDEs  have been intensively studied in statistical physics and have recently garnered renewed interest since they are the natural settings for stochastic mean-field games \cite{carmona2018probabilistic}. Nonetheless, their modelling scope is much broader than just mean-field games. Studying the long-time behaviour of such complex systems is of significant importance, especially when this system is proven as the limit of the single motion within models for complex phenomena involving a large number of interrelated components \cite{sznitman1991topics,Meleard1996,AdamsSalkeld2022LDPExitTimes,chen2023wellposedness}.  Our work brings the concept of random periodic solutions to the class of McKean-Vlasov SDEs  and paves a way for new modelling capabilities in fields, where the underlying physical system has periodic features that need to be captured and the McKean-Vlasov SDE is a suitable model. For instance, in neuroscience mean-field models \cite{LocherbachMonmarche2022Metastability} including the modelling of time-periodic features \cite{van2014periodic,FriedrichKinzel2009}, metastability of particle systems and their limits \cite{LocherbachMonmarche2022Metastability}, preservation of modelling properties across particle limit and scaling laws  \cite{chapman2012coarsening} in materials sciences, seasonal dynamics underlying electricity markets \cite{Alasseur2020storageSmartGrids}.

Owing to the involvement of measure variables in the coefficients, there are major discrepancies on  distributional properties between classical SDEs and McKean-Vlasov SDEs. For instance, the solution processes of classical SDEs are related to linear Markov processes whereas the McKean-Vlasov SDEs correspond  to nonlinear Markov  processes \cite{kolokoltsov2010nonlinear};   distributions of solution processes associated with classical SDEs solve  linear Fokker-Planck equations nevertheless the counterparts of McKean-Vlasov SDEs solve  nonlinear Fokker-Planck equations. As far as a classical SDE is concerned, the corresponding  Markov operator is a linear semigroup while the one associated with a McKean-Vlasov SDE is no longer a semigroup (see e.g.~ \cite{wang2018distribution,huang2021distribution}). As for classical SDEs, the distribution properties of solution processes with a general initial distribution can be investigated thoroughly once the counterpart starting from the Dirac measure is available (i.e., the flow property in $\bR^d$), while the point of view above is not any longer valid (see, for example, \cite{wang2018distribution} for further details). Furthermore, concerning classical SDEs, the localization procedure is powerful to handle e.g., existence and uniqueness of strong/weak solutions, ergodic properties, numerical methods, to name a few. Unfortunately, such a strategy  cannot  carry over (at least directly)  to  the setting of McKean–Vlasov SDEs; see e.g., \cite{reis2018simulation} for more details.

\textit{Our contributions.} 
In this paper, our first goal is to establish  a comprehensive framework to understand \textit{random periodic trajectory}  for SDEs of McKean-Vlasov type. 
As a more involved system, since the equation's dynamics depends on the law of the solution process itself, the task is not straightforward. 
There are several manners to tackle  the issue on existence and uniqueness of strong solutions to the McKean-Vlasov SDE  \eqref{E1}; see, for instance, \cite{salkeld2019LDPMV-SDEs,wang2018distribution} via a distribution Picard-iteration procedure and \cite{huang2021distribution} by invoking the Banach fixed point theorem. 
Furthermore, we refer to a comprehensive survey paper \cite{huang2021distribution} on recent progresses concerning McKean-Vlasov SDE theory, and the monograph \cite{carmona2018probabilistic}. 
As aforementioned, by its nature,  existence and uniqueness of  random periodic solutions and periodic measures are established simultaneously in our work (rather than one   after another as seen in all previous literature).

McKean-Vlasov SDEs are often obtained as limits of interacting particle systems (IPSs)  and the corresponding mathematical tool that shows  convergence of the IPS to the original McKean-Vlasov SDE is termed as the \emph{propagation of chaos} (PoC) \cite{sznitman1991topics,Meleard1996,AdamsSalkeld2022LDPExitTimes}. 
Our second contribution is the study of the behaviour around the infinite time horizon and we carry it out from two perspectives. One is via direct analysis of the McKean-Vlasov SDE and making use of the nonlinear flow  defined on the space of probability measure (see   Theorem  \ref{th1}). 
The other is via the particle limit of the IPS system. In other words, we firstly study the long-time behaviour of the IPS system and then leverage  PoC type results to ensure the long-time behaviour of the IPS carrying across the particle limit (see Theorems\ref{thm2} and   \ref{thm3} ). This exchange of limits result is far from trivial, and we point to the well-known   fact that invariant distributions of the IPS do not necessarily carry over as invariant distributions to the McKean-Vlasov SDE \cite{HerrmannTugaut2010}.

In the last few decades, there have been great advances in the study of existence of time-periodic laws for non-linear diffusion processes and associated non-linear Fokker-Planck  equations. 
Particularly,  \cite{scheutzow1985some}   provided a collection of non-trivial examples on the topic: a distribution-dependent Ornstein-Uhlenbeck process, a deterministic example, a one-dimensional example with a semi-linear drift, and a two-dimensional example with a linear drift. 
Additionally, \cite{scheutzow1986periodic} revealed that the stochastic Brusselator in the mean-field limit has a solution with a periodic law (with respect to time). Further recent progresses on periodic behaviours of mean-field models and associated particle systems can be found in \cite{cormier2021hopf,dai2020oscillatory,luccon2020emergence,Marini2023etalfrustratednetworks} and references within.  In \cite{Marini2023etalfrustratednetworks}, the authors focused on a particular interacting particle model and investigated mechanisms driving the emergence of periodic behaviours including in the corresponding mean-field limit. They combined both the effect of noise and the topology of the interaction network to study a model of so-called frustrated interacting diffusions and showed noise-induced periodicity. Their scopes were fairly broad as they studied the interacting particle system, propagation of chaos, the limiting mean-field model, the small noise limit case (and its ODE system) and behaviour from the associated non-linear Vlasov-Fokker-Plank equation including the accompanying numerical studies.
\color{black}
As for a long view, our manuscript proposes a new concept to be used for modelling and opening door for further studies: for instance, the vanishing noise behaviour by means of large deviations \cite{salkeld2019LDPMV-SDEs,HIPP2014Resonance}, numerical simulation \cite{reis2018simulation,wu2021random,Chen2024SuperSUper}, McKean-Vlasov SDE with jumps and random periodicity within the more complex  mean-field setup with common noise.

The paper is organised in the following way: in Section \ref{sec:framework}, the main notations are introduced, including ${X}^{\xi}_{s,t}$ the solution at time $t$ of the McKean-Vlasov SDE \eqref{E1} starting from  $\xi$ at  time $s$, ${X}^{*}_{t}$ the random periodic solution at time $t$ of the McKean-Vlasov equation, $ {\bf X}^{\xi^N}_{s,t}$ the solution at time $t$ of the corresponding IPS starting from $\xi^N$ at time $s$, and $ {\bf X}^{*,N}_{t}$ the random periodic solution at time $t$ of the corresponding IPS, and two main results, under the uniformly dissipative condition and the partially dissipative condition respectively, are summarised; Section \ref{sec:ProofofMVSDEwellposednesstheo:th1} is devoted to the proof of the first main result, i.e., the red downward arrow on the right side of the chart, showing existence and uniqueness of  random periodic solutions and periodic measures for  McKean-Vlasov equations; Section \ref{sec:ProofofInteractingParticSystem:PoC:thM2} is concerned with   the proof of the second main result, i.e., the blue double-arrow at the bottom of the chart, showing the PoC carrying over the results under the infinite time horizon.   
\begin{align*}
    &\hspace{1cm}  \ {\bf X}^{\xi^N}_{s,t}\ \ \overset{N\to \infty}{\Longrightarrow}\ \ {X}^{\xi}_{s,t}
    \\
    &\!\! {s\to -\infty}\downarrow \ \ \ \ \  \hspace{1.26cm}{\color{red}\downarrow}\ \ {s\to -\infty }
    \\
    &\hspace{1.2cm}\mathbf {X}^{*,N}_t \ \ \overset{N\to \infty}{{\color{blue}\Longrightarrow}} \ \ \hspace{0.07cm}X^{*}_t 
    \\
    & \hspace{1.22cm} \downarrow \ \ \hspace{0.75cm}{\downarrow}\ \ \hspace{0.50cm}{\downarrow}
    \\
    & \hspace{-1.3cm}\textrm{Our results:}  \ \ \textrm{Thm \ref{thm2}} \hspace{0.20cm}{\textrm{Thm \ref{thm3}}}\ \ {\textrm{Thm \ref{th1}}} 
   \quad \leftarrow \textrm{ Fully dissipative case \& result in  pathwise sense }
    \\
    & \hspace{-1.3cm}\textrm{Our results:}  \ \ \textrm{Thm \ref{thm4}} \hspace{0.20cm}{\textrm{Thm \ref{thm6}}}\ \ {\textrm{Thm \ref{thm*}}} 
   \quad \leftarrow \textrm{ Partially dissipative case \& result in distribution sense}
\end{align*}

The two assumption archetypes we explore lead to different formulations of the main results, where in one formulation the results hold in a pathwise sense while in the second formulation they hold in a distributional sense. The first block of assumptions focuses on a general dissipative condition and proofs follow more intuitive arguments. The second block of assumptions addresses the partially dissipative case but requires further structural assumptions on the McKean-Vlasov  dynamics and the proof makes use of coupling arguments.

Under the fully dissipative condition, the synchronous coupling approach is employed to treat   existence and uniqueness of pathwise random periodic solutions; see, for instance,  \cite{feng2017numerical,wu2021random,wu2021galerkin}. Whereas, the synchronous coupling trick does not work once the uniformly dissipative condition is broken. 
In terms of \cite[Proposition 2.1]{bao2022random}, ergodicity under the Wasserstein distance plays a crucial role in investigating existence and uniqueness of random periodic solutions in the weak sense. So far,  one of the powerful tools in exploring ergodic properties for stochastic dynamical systems is 
the coupling method, which has been   investigated considerably in dealing with e.g. eigenvalue estimate and spectral gap 
in the monograph \cite{chen2004markov}.   Recently, a great breakthrough on exponential ergodicity for SDEs with non-uniformly dissipative coefficients  is due to \cite{eberle2016reflection}. 
In the spirit of \cite{eberle2016reflection},
more progresses on exponential ergodicity for non-uniformly dissipative SDEs driven by Brownian motions or pure jump L\'{e}vy processes have been made by invoking the reflection coupling or the refined basic coupling; see e.g. \cite{luo2019refined,ren2021exponential,wang2021exponential}.
For non-dissipative  McKean-Vlasov SDEs,  exponential ergodicity under the Wasserstein-type distance has been considered in \cite{wang2021exponential}. Subsequently,  \cite{wang2021exponential} was extended to the setting of time-periodic McKean-Vlasov SDEs, where  exponential ergodicity  in relative entropy was also discussed via the Talagrand equality and the log-Sobolev inequality. In Section 5, we also address  exponential ergodicity for the McKean-Vlasov SDE \eqref{E1} and the corresponding interacting particle system under the partially dissipative condition. With contrast to  \cite{ren2021exponential,wang2021exponential}, a much more explicit transportation cost function is designed in the present paper. Most importantly,  via a much more direct approach (compared with the splicing method adopted in \cite{ren2021exponential,wang2021exponential}), 
we cope with the PoC 
uniform  in time and limit interchangeability for time-periodic McKean-Vlasov SDEs with multiplicative noise, which was not involved \cite{ren2021exponential,wang2021exponential}.

In the past few years, there is a huge amount of literature concerned with PoC in a  finite time horizon; see, for instance, \cite{sznitman1991topics,carmona2018probabilistic}
 for McKean-Vlasov SDEs with regular coefficients. 
Since the pioneer work \cite{durmus2020elementary} on investigating by the coupling approach
the uniform-in-time  PoC 
 for time-homogeneous granular media equations  with additive noise,  the issue on the uniform-in-time  PoC 
 has received more and more attention; see, for example, \cite{lacker2023sharp,guillin2022convergence} for McKean-Vlasov SDEs with additive noises.

\section{Framework and main results} \label{sec:framework}

\subsection*{Notations and spaces}

  

Let $(\R^d,\<\cdot,\cdot\>,|\cdot|)$ be the $d$-dimensional Euclidean space with the inner product $\<\cdot,\cdot\>$ inducing the Euclidean norm $|\cdot|$ and $(\R^{m}\otimes\R^d,\<\cdot,\cdot\>_{\rm HS},\|\cdot\|_{\rm HS})$ the collection of all $m\times d$ matrices with real entries, which is endowed with the Hilbert-Schmidt product $\<\cdot,\cdot\>_{\rm HS}$ inducing the Hilbert-Schmidt norm $\|\cdot\|_{\rm HS}$.
Let $\bZ$ be the space of integer numbers and, for each integer $N\ge1$, define $\mathbb S_N=\{ 1,2,\cdots,N\}$. For $x\in \R^d$ or $A\in \R^{d\times m}$,  define $x^*$ and $A^*$ as their corresponding transposes. For $x,y\in \R^d$, define the function $\text{vec}$ iteratively as $\text{vec}(x,y):=(x^*,y^*)^*$, which is a column vector. We set $\bf0$ as the origin of $\bR^d$ (the dimension is implied if no confusion arises).

Set $L_{\rm loc}(\R;\R_+)$ as the space of locally integrable functions $f:\R\to \R_+$. 
Denote $\mathscr P(\mathbb{R}^d)$ by the class
of probability measures on $\mathbb{R}^d$. For $p>0$,  
$\mathscr P_p(\mathbb{R}^d)$  refers to  
a subspace of $\mathscr P(\R^d)$ with finite $p$-th moment, i.e., 
\begin{equation*}
  \mathscr P_p(\mathbb{R}^d)=\bigg\{\mu\in\mathscr P(\R^d)\Big|\ \mu(|\cdot|^p):=\int_{\R^d}|x|^p\,\mu(\d x)<\8\bigg\}, 
\end{equation*}
 which is a Polish space under the Wasserstein distance
\begin{align*}
    \mathbb{W}_p(\mu,\nu)=\inf_{\pi\in\mathcal C(\mu,\nu)}\Bigg\{\bigg(\int_{\mathbb{R}^d\times\mathbb{R}^d}\mathrm{d}(x,y)^p\pi(\mathrm{d}x,\mathrm{d}y)\bigg)^{\frac{1}{1\vee p}} \Bigg\},
\end{align*}
where $\mathscr C(\mu,\nu)$ means the set of couplings of $\mu$ and $\nu$, i.e., $\pi\in\mathscr C(\mu,\nu)$ if and only if $\pi(\cdot,\R^d)=\mu(\cdot)$ and $\pi(\R^d,\cdot)=\nu(\cdot)$.
For a random variable $\xi$,    we shall write $\xi\sim\mu$ to represent  that $\xi$ is distributed according to $\mu$. For $x\in\R^d$,   $\delta_x$ denotes the Dirac delta measure centered at $x$. For $x,y\in \R^d$,  $\delta_{(x,y)}$ specifies the Dirac delta measure centered at $\text{vec}(x,y)$.

Let $\mathbb{P}$ be the two-sided $d$-dimensional Wiener measure on $(\Omega, \mathscr{F})$, which is a measure-preserving probability. Write  $(W(t))_{t \in \mathbb{R}}$ as the $d$-dimensional Brownian motion on the probability space $(\Omega, \mathscr{F}, \mathbb{P})$; see \cite{bao2022random} for more details. We denote by $L^p(\OO\to\R^d,\F_s,\P)$ the space of $\F_s$-measurable random variables $X$ in $\R^d$ with finite $p$-th moment, i.e., $\E|X|^p<\8$.

Let $\theta$ be the Wiener shift operator defined by $\left(\theta_{t} \omega\right)(s)=\omega(t+s)-\omega(t)$ for all $s, t \in \mathbb{R}$ and $\omega \in \Omega$. Then, $(\Omega, \mathscr{F}, \mathbb{P}, \theta)$ is a metric dynamical system. 
For each $\omega \in \Omega$ and $t \in \mathbb{R}$, define $W(t, \omega)=\omega(t)$. We will use $W_t$ for $W(t,\omega)$ when there is no ambiguity about $\omega$.
 
For a vector valued (or matrix-valued) function $f$ on $\mathbb{R}$ and a positive constant $\tau, f$ is said to be $\tau$-periodic if $f(t+\tau)=f(t)$ for all $t \in \mathbb{R}$. Set $\triangle:=\{(s,t)\in\R^2: s\le t, t,s\in\R\}$. For two random variables 
$\xi,\eta$, we write $\xi\overset{d}{=}\eta$ to demonstrate that $\xi$ and $\eta$
share the same law. 


\subsection*{Framework}
To guarantee  existence and uniqueness of strong solutions to the McKean-Vlasov SDE \eqref{E1}, for the drift term $b$ and the diffusion term $\sigma$,  we assume that 
\begin{enumerate}
\item[(\bf A)] 
$\R\ni t\mapsto |b_t({\bf0},\delta_{{\bf0}})|^2+\|\sigma_t({\bf0},\delta_{{\bf0}})\|^2_{\rm HS}$  is continuous, and, for each $t\in\R$, $ b_t$ is continuous and bounded on bounded sets of $\R^d\times \mathscr P_2(\R^d)$. 
 Moreover, there exist $\bar{K}_1,\bar{K}_2\in L_{\rm loc}(\R;\R_+)$ such that   for all $t\in\R,$ $x,y\in\R^d$ and $\mu,\nu\in\mathscr P_2(\R^d)$, 
\begin{equation}\label{E2}
2\<x-y,b_t(x,\mu)-b_t(y,\nu)\>\le \bar{K}_1(t)\big(|x-y|^2+\mathbb W_2(\mu,\nu)^2\big),
\end{equation}
and 
\begin{equation}\label{E3}
\|\sigma_t(x,\mu)-\sigma_t(y,\nu)\|_{\rm HS}^2\le \bar{K}_2(t)\big(|x-y|^2+\mathbb W_2(\mu,\nu)^2\big).
\end{equation}
\end{enumerate}

In terms of  \cite[Theorem 3.3]{huang2021distribution}, \eqref{E1} has a unique strong solution $(X_{s,t})_{t\ge s}$ and, for a given finite horizon $T(\ge s)$, there exists a constant $C=C(s,T)>0$ such that 
\begin{equation*}
\E\Big(\sup_{s\le t\le T}|X_{s,t}|^2\Big)\le C\big(1+\E|X_{s,s}|^2\big)   
\end{equation*}
for all
 $X_{s,s}\in L^2(\OO\to\R^d,\F_s,\P)$. Below, in some occasions,   we shall write $(X_{s,t}^\xi)_{t\ge s}$ instead of $(X_{s,t})_{t\ge s}$ to emphasise the initial value $X_{s,s}=\xi\in L^2(\OO\to\R^d,\F_s,\P)$. 

As we know,   Assumption ({\bf A}) dominates the short-time behaviour (e.g. existence and uniqueness of solutions, continuous dependency on initial distributions and time regularity) of solutions to the McKean-Vlasov SDE \eqref{E1}. Whereas,   Assumption ({\bf A}) is insufficient to manipulate  the long-term behaviour (e.g., existence of random periodic solution \cite{ren2021exponential} and ergodic property \cite{wang2021exponential}) of solution processes to \eqref{E1}.

Before we proceed, let's retrospect the notions on random periodic solutions for a semi-flow involved.   Below, the underlying state space $H$ is set to be   a separable Banach space.

\begin{definition}
\label{feng-zhao1}(\cite[Definition 1.1]{feng2011pathwise})
   For a fixed $\tau>0$, an ${\cal F}$-measurable
map $X^*:\mathbb{R}\times \Omega\to H $ is called a pathwise random $\tau$-periodic solution of the  semi-flow  $\phi: \Delta\times H\times \Omega\to H$ if for all $(s,t)\in\triangle$ and a.s. $\omega\in \Omega$,
\begin{eqnarray*}
\phi(t,s, X^*_s(\omega), \omega)=X^*_t(\omega) \quad \mbox{ and } \quad
X^*_{t+\tau}(\omega)=X^*_t( \theta_\tau \omega).
\end{eqnarray*}
\end{definition}

\begin{remark}
In  Definition \ref{feng-zhao1}, 
the initial time point and initial condition are both hidden. The initial time point is  set to be $-\infty$ by default thus it may be unnecessary to give the initial condition; see  \cite{wu2021random,bao2022random} for a detailed  discussion.
\end{remark}

The study on  existence of pathwise random periodic solutions has been treated in variant settings.  Herein, we would like to refer to     \cite{feng2011pathwise,feng2016anticipating,feng2017numerical,wu2021random} for $\mathbb{R}^d$-valued SDEs with $H=\mathbb{R}^d$, \cite{feng2012random,wu2018random} 
for  SPDEs on a bounded domain $D \subset \mathbb{R}^d$ with $H=L^2(D)$, and    \cite{bao2022random} for a functional  SDE with a finite time lag $r_0$ and $H=L^2\big(\Omega \to C([-r_0,0];\mathbb{R}^d)\big)$.  
In some scenarios, the pathwise random periodic solution is unavailable for a semi-flow under investigation. In this setting, we shall focus on a weak notion on random periodic solutions, that is, the random periodic solution in the sense of distribution, which is stated as follows.

\begin{definition}(\cite[Theorem 1.1]{bao2022random})
\label{def:random-period-solution} For a fixed number $\tau>0,$
an ${\cal F}$-measurable
map $X^*:\mathbb{R}\times \Omega\to H$ is called a  random $\tau$-periodic solution in the sense of distribution of the  semi-flow  $\phi: \Delta\times H\times \Omega\to H$ if for all $(s,t)\in\triangle,$
\begin{eqnarray*}
\phi(t,s, X^*_s(\omega), \omega)\overset{d}{=}X^*_t(\omega) \quad \mbox{ and } \quad 
X^*_{t+\tau}(\omega)\overset{d}{=}X^*_t( \theta_\tau \omega) .
\end{eqnarray*}

\end{definition}
Under a partially dissipative condition,   in \cite{bao2022random} it is set $H=L^1(\Omega\to \mathbb{R}^d)$  to study random periodicity of an   $\mathbb{R}^d$-valued SDE with an additive noise.

%

\subsection*{Main result  under a general dissipative condition}
Regarding the McKean-Vlasov SDE \eqref{E1}, to investigate existence of random periodic solutions (which definitely is concerned with the long-term behaviour of solutions), we need to strengthen  Assumption ({\bf A}) by  Assumption ({\bf A'}), as stated below. 

\begin{enumerate}
\item[(\bf A')] Assume {(\bf A)}. In addition, for each $x\in\R^d$ and each $\mu\in\mathscr P_2(\R^d)$, $t\mapsto b_t(x,\mu)$ and $t\mapsto \si_t(x,\mu)$ are $\tau$-periodic for some deterministic constant    $\tau>0$. Moreover, there exist $\tau$-periodic functions $K_1\in C(\R;\R)$ and $K_2,K_3\in C(\R;\R_+)$ such that for all $t\in\R$, $x,y\in\R^d$ and $\mu,\nu\in\mathscr P_2(\R^d)$, 
\begin{equation} \label{E20}
2\<x-y,b_t(x,\mu)-b_t(y,\nu)\>+\|\sigma_t(x,\mu)-\sigma_t(y,\nu)\|_{\rm HS}^2\le K_1(t)|x-y|^2+K_2(t)\mathbb W_2(\mu,\nu)^2,
\end{equation}
and 
\begin{equation} \label{E21}
\|\sigma_t(x,\mu)-\sigma_t(y,\nu)\|_{\rm HS}^2\le K_3(t)\big(|x-y|^2+\mathbb W_2(\mu,\nu)^2\big).
\end{equation}
\end{enumerate}

{In the following, an illustrative example is constructed  to show that \eqref{E20} is available.  Let $\R\ni t\mapsto K_1(t)\in[-1,\frac{1}{2}]$ be a $1$-periodic function such that 
\begin{equation*}
K_1(t)=
 \begin{cases}
   -2t,\quad \quad\quad ~~0\le t\le \frac{1}{2},\\
   6t-4,\quad \quad~~\frac{1}{2}\le t\le \frac{3}{4},\\
   -2(t-1), \quad \frac{3}{4}\le t\le 1.
 \end{cases}   
\end{equation*}
Then, we define for any $x\in\R$ and $\mu\in\mathscr P(\R)$, 
\begin{equation*}
b_t(x,\mu)=K_1(t)\big((x^3+x)\I_{\{0\le t\le \frac{3}{4}\}}+x\I_{\{\frac{3}{4}\le t\le 1\}}
\big)+\frac{1}{2}K_2(t)\mu(|\cdot|),
\end{equation*}
and $\si_t(x,\mu)=\frac{1}{2}K_2(t)\mu(|\cdot|)$, where $\R\ni t\mapsto K_2(t)\in[0,\8)$ is a $1$-periodic function. A direct calculation shows that \eqref{E20} is satisfied. 
}

Our first main result is concerned with existence of pathwise random periodic solutions to the McKean-Vlasov SDE \eqref{E1}. The proof is postponed to  Section \ref{sec:ProofofMVSDEwellposednesstheo:th1}.

\begin{thm}\label{th1}
Assume   $({\bf A'})$ with \begin{equation}\label{E23}
-\lambda:=\int_0^\tau\big(K_1(u)+K_2(u)\big)\,\d u\in(-\8,0).
\end{equation} 
Then, 
the stochastic semi-flow $\phi$ defined by
\begin{equation}\label{E22}
\Delta\times L^2(\Omega\to\R^d,\F_s,\P) \times \Omega    \ni (s,t,\xi,\omega)\mapsto \phi(t,s,\xi,\omega)=X_{s,t}^\xi(\omega),
 \end{equation}
where  $X_{s,t}^\xi(\omega)$ solves  \eqref{E1},  possesses a unique  \textit{pathwise random $\tau$-periodic solution}. That is, 
 there exists a unique stochastic process $ X^{*}_t \in L^2(\Omega\to\R^d,\F_t,\P)$ such that  for all $t\in\R$ and $h\ge0$,  
 \begin{equation*}
X^*_{t+h}(\omega)=\phi(t+h, t, X^*_t(\omega),\omega)\quad \mbox{ and } \quad X^*_{t+\tau}(\omega)=X^*_t(\theta_\tau\omega)\quad {\rm a.s. }
\end{equation*}
Moreover, for all $t\in\R$  and $ \xi \in L^2(\Omega\to\R^d,\F_s,\P)$,
\begin{equation} \label{P17}
    \lim_{s\downarrow-\8}\E|X^{\xi}_{s,t}-X^*_t|^2=0.
\end{equation} 
\end{thm}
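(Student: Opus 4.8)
The plan is to construct $X^*$ as the limit, as $s\downarrow-\infty$, of solutions started from a fixed initial condition (say $\xi=\mathbf 0$ at time $s$), and to show this limit exists, is $\tau$-periodic in the required pathwise sense, and satisfies \eqref{P17} for an \emph{arbitrary} square-integrable initial condition. The engine driving everything is a one-step dissipativity estimate: for two solutions $X^\xi_{s,\cdot},X^\eta_{s,\cdot}$ of \eqref{E1} with the \emph{same} Brownian motion (synchronous coupling), an application of It\^o's formula to $|X^\xi_{s,t}-X^\eta_{s,t}|^2$ together with \eqref{E20} and the bound $\mathbb W_2(\mathscr L_{X^\xi_{s,t}},\mathscr L_{X^\eta_{s,t}})^2\le \E|X^\xi_{s,t}-X^\eta_{s,t}|^2$ yields
\begin{equation*}
\frac{\d}{\d t}\,\E|X^\xi_{s,t}-X^\eta_{s,t}|^2\le \big(K_1(t)+K_2(t)\big)\,\E|X^\xi_{s,t}-X^\eta_{s,t}|^2,
\end{equation*}
hence by Gronwall $\E|X^\xi_{s,t}-X^\eta_{s,t}|^2\le \exp\!\big(\int_s^t(K_1(u)+K_2(u))\,\d u\big)\,\E|\xi-\eta|^2$. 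Because $K_1+K_2$ is $\tau$-periodic with $\int_0^\tau(K_1+K_2)=-\lambda<0$, the exponent over an interval of length $n\tau$ is $-n\lambda$ plus a bounded-over-one-period correction, so the contraction factor over $[s,t]$ decays like $e^{-\lambda(t-s)/\tau}$ up to a multiplicative constant; this is the key quantitative input.

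First I would record the flow/cocycle property: for $s\le r\le t$, $\phi(t,s,\xi,\omega)=\phi\big(t,r,\phi(r,s,\xi,\omega),\omega\big)$ (from pathwise uniqueness under \textbf{(A)}), and the crucial shift-covariance $\phi(t+\tau,s+\tau,\xi,\omega)=\phi(t,s,\xi,\theta_\tau\omega)$, which holds because the coefficients are $\tau$-periodic in $t$ and $W_{t+\tau}-W_\tau = (\theta_\tau W)_t$. Next, fix $t$ and consider the family $\{X^{\mathbf 0}_{s,t}\}_{s\le t}$ in $L^2(\Omega\to\R^d,\F_t,\P)$. Using the flow property to write $X^{\mathbf 0}_{s',t}=\phi\big(t,s,X^{\mathbf 0}_{s',s},\omega\big)$ for $s'\le s$ and applying the contraction estimate with $\eta=X^{\mathbf 0}_{s',s}$, $\xi=\mathbf 0$, together with the a priori bound $\sup_{s'\le s}\E|X^{\mathbf 0}_{s',s}|^2<\infty$ (which follows from the dissipativity, i.e.\ a one-sided growth bound forcing uniform-in-start $L^2$ bounds — here $-\lambda<0$ is what makes this supremum finite), I get $\E|X^{\mathbf 0}_{s,t}-X^{\mathbf 0}_{s',t}|^2\le C e^{-\lambda(t-s)/\tau}$, uniformly in $s'\le s$. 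Hence $\{X^{\mathbf 0}_{s,t}\}$ is Cauchy in $L^2$ as $s\downarrow-\infty$; call the limit $X^*_t$, which is $\F_t$-measurable. The estimate with general $\xi$ in place of $\mathbf 0$ then gives \eqref{P17} immediately, and also shows the limit does not depend on the chosen anchor $\mathbf 0$.

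Then I would verify the two defining properties of a pathwise random $\tau$-periodic solution. For the semi-flow identity $X^*_{t+h}=\phi(t+h,t,X^*_t,\omega)$: write $X^{\mathbf 0}_{s,t+h}=\phi(t+h,t,X^{\mathbf 0}_{s,t},\omega)$, let $s\downarrow-\infty$ on both sides, using $X^{\mathbf 0}_{s,t}\to X^*_t$ in $L^2$ on the right and continuous dependence of $\phi(t+h,t,\cdot,\omega)$ on its initial datum in $L^2$ (again from the contraction estimate, now on the finite interval $[t,t+h]$), and $X^{\mathbf 0}_{s,t+h}\to X^*_{t+h}$ on the left. For periodicity $X^*_{t+\tau}(\omega)=X^*_t(\theta_\tau\omega)$: by shift-covariance, $X^{\mathbf 0}_{s,t+\tau}(\omega)=\phi(t+\tau,s,\mathbf 0,\omega)=\phi(t,s-\tau,\mathbf 0,\theta_\tau\omega)=X^{\mathbf 0}_{s-\tau,t}(\theta_\tau\omega)$; letting $s\downarrow-\infty$ gives $X^*_{t+\tau}(\omega)=X^*_t(\theta_\tau\omega)$ (the law of $\theta_\tau\omega$ is $\P$, so $L^2$-convergence transfers). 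Uniqueness: if $Y^*$ is another pathwise random $\tau$-periodic solution, then $Y^*_t=\phi(t,s,Y^*_s,\omega)$ for all $s\le t$, so $\E|Y^*_t-X^{\mathbf 0}_{s,t}|^2\le C e^{-\lambda(t-s)/\tau}\,\E|Y^*_s-\mathbf 0|^2$; since $\E|Y^*_s|^2=\E|Y^*_{s \bmod \tau}|^2$ is bounded by periodicity-in-law, letting $s\downarrow-\infty$ forces $Y^*_t=X^*_t$ a.s.

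The main obstacle is not any single step but making the contraction estimate honest in the McKean-Vlasov setting: the synchronous coupling couples the \emph{driving noise} but the two solutions still have \emph{different laws} entering their own coefficients, so one must be careful that the It\^o computation closes with $\mathbb W_2(\mathscr L_{X^\xi},\mathscr L_{X^\eta})^2\le \E|X^\xi-X^\eta|^2$ and that \eqref{E20} is exactly the structural assumption that lets the $K_2\mathbb W_2^2$ term be absorbed into an estimate on $\E|X^\xi-X^\eta|^2$ rather than spawning an uncontrolled hierarchy — this is precisely why the hypothesis separates the $|x-y|^2$ and $\mathbb W_2^2$ contributions and why the relevant rate is $\int_0^\tau(K_1+K_2)$ and not $\int_0^\tau K_1$. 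A secondary technical point is justifying the uniform-in-$s$ second-moment bound $\sup_{s\le s_0}\E|X^{\mathbf 0}_{s,t}|^2<\infty$ globally in $t$ over a period (rather than just on compacts), which again rests on $-\lambda<0$ and a Gronwall argument on $\E|X^{\mathbf 0}_{s,t}|^2$ using \eqref{E20} with $y=\mathbf 0$, $\nu=\delta_{\mathbf 0}$ plus the continuity/periodicity of $t\mapsto |b_t(\mathbf 0,\delta_{\mathbf 0})|^2+\|\sigma_t(\mathbf 0,\delta_{\mathbf 0})\|_{\rm HS}^2$.
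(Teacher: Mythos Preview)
Your proposal is correct and follows essentially the same route as the paper: the paper isolates your ``one-step dissipativity'' contraction as Lemma~\ref{lem:stable1}, the uniform-in-$s$ second-moment bound as Lemma~\ref{lem3}, the semi-flow property as Lemma~\ref{lem1}, and the shift-covariance as Lemma~\ref{lem2}, and then invokes an external criterion \cite[Proposition~3.6]{bao2022random} which carries out exactly your Cauchy-in-$s$ construction of $X^*_t$ and the verification of its defining properties. The only noteworthy difference is emphasis: you dispatch the semi-flow property and the shift identity $\phi(t+\tau,s+\tau,\xi,\omega)=\phi(t,s,\xi,\theta_\tau\omega)$ in a line each (``from pathwise uniqueness'' and ``because the coefficients are $\tau$-periodic''), whereas the paper devotes full lemmas with Banach fixed-point arguments on measure-valued path spaces to them, reflecting the fact that in the McKean--Vlasov setting the coefficients see $\mathscr L_{X_{s,t}}$ and one must check that the law variable behaves consistently under restriction to $[r,t]$ and under the time-shift; your arguments are valid, but the paper's care here is precisely to close the point you flag as ``the main obstacle''.
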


A common element for McKean-Vlasov equations is their associated (interacting and non-interacting) particle systems and PoC results.  Consider the auxiliary non-interacting particle system  associated with \eqref{E1}: for  the integer $N\geq 1$,  $i\in \mathbb S_N $, and $(s,t)\in\Delta,$
\begin{equation}\label{P1}
\d X_{s,t}^{i}=b_t(X_{s,t}^{i},\mu_{s,t}^i)\,\d t+\si_t(X_{s,t}^{i},\mu_{s,t}^i)\,\d W_t^i,  \quad   \quad \mathscr L_{X_{s,s}^i}=\mathscr L_{X_{s,s}}=\mu,
\end{equation}
where $(X_{s,s}^i,W_t^{i})_{i\in \mathbb S_N}$   are i.i.d. copies of $(X_{s,s},W_t)$ and  $\mu_{s,t}^i:=\mathscr L_{X_{s,t}^{i}}$. Due to the strong well-posedness of \eqref{E1}, we obviously have $\mu_{s,t}^i=\mu_{s,t}=\mathscr L_{X_{s,t}}$ with $\mathscr L_{X_{s,s}}=\mu,$ where $(X_{s,t})_{t\ge s}$ is the solution process to \eqref{E1}. 
Replacing  $\mu_{s,t}^i$ in \eqref{P1} by the empirical measure generated by the $N$ particles, we obtain the IPS related to \eqref{E1}: for all  $i\in \mathbb S_N $, and $(s,t)\in\Delta$,
\begin{equation}\label{P2}
\d X_{s,t}^{i,N}
=
b_t\big(X_{s,t}^{i,N},\hat{\mu}^{{\bf X}^N}_{s,t}\big)\,\d t
+\si_t\big(X_{s,t}^{i,N},\hat{\mu}^{{\bf X}^N}_{s,t}\big)\,\d W_t^i, 
\quad   \quad  X_{s,s}^{i,N}=X_{s,s}^i,
\end{equation}
where the empirical measure $\hat{\mu}^{{\bf X}^{N,N}}_{s,t}$   
is defined by
$$\hat{\mu}^{{\bf X}^{N,N}}_{s,t}(\d z)=\hat{\mu}^{{\bf X}_{s,t}^{N,N}}(\d z):=\frac{1}{N}\sum_{j=1}^N\delta_{X_{s,t}^{j,N}}(\d z),$$
with ${\bf X}_{s,t}^{N,N}:=\text{vec}\big(X_{s,t}^{1,N},\cdots,X_{s,t}^{N,N}\big)\in( \R^{d})^N$.

Set
$$\xi^N:=\text{vec}(\xi_1,\cdots,\xi_N)\quad  \mbox{ and } \quad   \eta^N:=\text{vec}(\eta_1,\cdots,\eta_N),$$
where 
  $\xi_i, \eta_i\in L^p(\OO\to\R^d,\F_s,\P)$, $i\in\mathbb S_N$, are  i.i.d copies of $X_{s,s}\in L^p(\OO\to\R^d,\F_s,\P)$ for some $p\ge2$. In the sequel,  we shall also write 
${\bf X}_{s,t}^{N,N,\xi^N}$ and ${\bf X}_{s,t}^{N,N,\eta^N}$ instead of ${\bf X}_{s,t}^{N}$ in case of ${\bf X}_{s,s}^{N,N}=\xi^N$ and ${\bf X}_{s,s}^{N,N}=\eta^N$
to   highlight the initial value.  
For each fixed integer $N\ge1$, define the mapping $\phi^N:\triangle\times(\R^{d})^N\times\Omega\to(\R^{d})^N$ by 
\begin{equation}\label{1T}
(s,t,\xi^N,\omega)\mapsto\phi^N(t,s,\xi^N,\omega)={\bf X}_{s,t}^{N,N,\xi^N}(\omega).
\end{equation}

The next results confirms the well-posedness of the particle system and the pathwise random periodic solution property.
\begin{thm}\label{thm2}
Under  assumptions of Theorem \ref{th1}, for each fixed $N\ge1$,
the stochastic semi-flow $\phi^N$  defined in  \eqref{1T}, possesses a unique pathwise random $\tau$-periodic solution. That is, 
there exists a unique stochastic process $ {\bf X}^{*,N,N}_t =\text{vec}\big(X_{t}^{*,1,N},\cdots,X_{t}^{*,N,N}\big)\in L^2(\Omega\to(\R^{d})^N,\F_t,\P)$ such that  for all $t\in\R$ and $h\ge0$,
 \begin{equation}\label{P14}
{\bf X}^{*,N,N}_{t+h}(\omega)=\phi^N(t+h, t, {\bf X}^{*,N,N}_t(\omega),\omega),\qquad {\bf X}^{*,N,N}_{t+\tau}(\omega)={\bf X}^{*,N,N}_t(\theta_\tau\omega)\quad {\rm a.s. }
\end{equation}
Furthermore,  for all $t\in\R$ and $ \xi^N \in L^2(\Omega\to(\R^{d})^N,\F_s,\P)$,
\begin{equation} \label{P15}
    \lim_{s\downarrow-\8}\E|{\bf X}^{N,N,\xi^N}_{s,t}-{\bf X}^{*,N,N}_t|^2=0.
\end{equation}
\end{thm}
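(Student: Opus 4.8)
The strategy is to observe that, once written on the product state space $\mathbb{H}_N:=(\R^d)^N$, the interacting particle system \eqref{P2} is itself an equation of the type \eqref{E1} --- in fact one with no genuine dependence on the law --- to which Theorem \ref{th1} applies with $d$ replaced by $Nd$. For $\mathbf{x}=\text{vec}(x^1,\cdots,x^N)\in\mathbb{H}_N$ write $\hat{\mu}^{\mathbf{x}}:=\frac1N\sum_{j=1}^N\delta_{x^j}\in\mathscr P_2(\R^d)$ and set
\[
B_t(\mathbf{x}):=\text{vec}\big(b_t(x^1,\hat{\mu}^{\mathbf{x}}),\cdots,b_t(x^N,\hat{\mu}^{\mathbf{x}})\big),\qquad \Sigma_t(\mathbf{x}):=\mathrm{diag}\big(\sigma_t(x^1,\hat{\mu}^{\mathbf{x}}),\cdots,\sigma_t(x^N,\hat{\mu}^{\mathbf{x}})\big),
\]
and let $\mathbf{W}_t:=\text{vec}(W^1_t,\cdots,W^N_t)$ be the concatenated two-sided Brownian motion on the product Wiener space carrying the i.i.d.\ copies $(X^i_{s,s},W^i)_{i\in\mathbb S_N}$ of $(X_{s,s},W)$, whose shift $\theta_\tau$ acts coordinatewise. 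Then \eqref{P2} reads $\d\mathbf{X}^{N,N}_{s,t}=B_t(\mathbf{X}^{N,N}_{s,t})\,\d t+\Sigma_t(\mathbf{X}^{N,N}_{s,t})\,\d\mathbf{W}_t$, and $\phi^N$ in \eqref{1T} is precisely the associated stochastic semi-flow on $L^2(\Omega\to\mathbb{H}_N,\F_s,\P)$. Periodicity of $(b,\sigma)$ in $t$ transfers at once to $(B,\Sigma)$.

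Next I would verify that $(B,\Sigma)$ satisfy Assumption ({\bf A'}) on $\mathbb{H}_N$ together with the analogue of condition \eqref{E23}. The only genuine ingredient is the ``diagonal coupling'' bound: since $\frac1N\sum_j\delta_{(x^j,y^j)}\in\mathscr C(\hat{\mu}^{\mathbf{x}},\hat{\mu}^{\mathbf{y}})$,
\[
\mathbb{W}_2(\hat{\mu}^{\mathbf{x}},\hat{\mu}^{\mathbf{y}})^2\le\frac1N\sum_{j=1}^N|x^j-y^j|^2=\frac1N|\mathbf{x}-\mathbf{y}|^2,\qquad \mathbb{W}_2(\hat{\mu}^{\mathbf{x}},\delta_{\mathbf{0}})^2=\frac1N|\mathbf{x}|^2.
\]
Continuity of $t\mapsto|B_t(\mathbf{0})|^2+\|\Sigma_t(\mathbf{0})\|_{\rm HS}^2=N\big(|b_t(\mathbf{0},\delta_{\mathbf{0}})|^2+\|\sigma_t(\mathbf{0},\delta_{\mathbf{0}})\|_{\rm HS}^2\big)$ and local boundedness/continuity of $B_t$ follow from Assumption ({\bf A}) and the $\mathbb{W}_2$-Lipschitz continuity of $\mathbf{x}\mapsto\hat{\mu}^{\mathbf{x}}$ (the first bound). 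Summing \eqref{E20} over $i\in\mathbb S_N$, using that $\Sigma_t$ is block-diagonal, and inserting the first bound gives
\[
2\<\mathbf{x}-\mathbf{y},B_t(\mathbf{x})-B_t(\mathbf{y})\>+\|\Sigma_t(\mathbf{x})-\Sigma_t(\mathbf{y})\|_{\rm HS}^2\le\big(K_1(t)+K_2(t)\big)|\mathbf{x}-\mathbf{y}|^2,
\]
so \eqref{E20} holds for $(B,\Sigma)$ with $\widetilde K_1:=K_1+K_2\in C(\R;\R)$ and $\widetilde K_2\equiv0\in C(\R;\R_+)$, while summing \eqref{E21} gives \eqref{E21} for $(B,\Sigma)$ with $\widetilde K_3:=2K_3\in C(\R;\R_+)$. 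In particular the corresponding integrability constant is
\[
\int_0^\tau\big(\widetilde K_1(u)+\widetilde K_2(u)\big)\,\d u=\int_0^\tau\big(K_1(u)+K_2(u)\big)\,\d u=-\lambda\in(-\8,0),
\]
so that \eqref{E23} is satisfied verbatim by the lifted system.

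Finally, applying Theorem \ref{th1} to $\d\mathbf{X}^{N,N}_{s,t}=B_t\,\d t+\Sigma_t\,\d\mathbf{W}_t$ on the Banach space $L^2(\Omega\to\mathbb{H}_N,\F_s,\P)$ yields a unique $\mathbb{H}_N$-valued pathwise random $\tau$-periodic solution $\mathbf{X}^{*,N,N}_t=\text{vec}(X^{*,1,N}_t,\cdots,X^{*,N,N}_t)$ of $\phi^N$, i.e.\ \eqref{P14}, together with the $L^2$-convergence \eqref{P15}; note that the threshold in \eqref{E23} is inherited with the same value $-\lambda$, uniformly in $N$. I expect the only delicate point to be the book-keeping needed to match the product filtration and the coordinatewise Wiener shift $\theta_\tau$ with the probabilistic framework underlying Theorem \ref{th1} (for which one relies on $(X^i_{s,s},W^i)_{i\in\mathbb S_N}$ being i.i.d.\ copies of $(X_{s,s},W)$), rather than the dissipativity estimate, which is immediate from the displayed Wasserstein bounds.
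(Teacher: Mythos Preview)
Your proposal is correct and follows essentially the same route as the paper: both rewrite the interacting particle system \eqref{P2} as a classical SDE on $(\R^d)^N$ with block-diagonal diffusion, invoke the diagonal-coupling bound $\mathbb W_2(\hat\mu^{\mathbf{x}},\hat\mu^{\mathbf{y}})^2\le N^{-1}|\mathbf{x}-\mathbf{y}|^2$ to convert \eqref{E20}--\eqref{E21} into the same dissipativity and Lipschitz estimates for the lifted coefficients (these are exactly \eqref{P4}--\eqref{P5} in the paper's Lemma \ref{lem4}), and then appeal to the abstract criterion for pathwise random periodic solutions. The only cosmetic difference is that the paper first records the moment bounds \eqref{P6}--\eqref{P77} in Lemma \ref{lem4} and then invokes \cite[Proposition 3.6]{bao2022random} directly, whereas you absorb this step by applying Theorem \ref{th1} itself to the lifted (measure-independent) system; since Theorem \ref{th1} is proved precisely via that proposition, the two arguments coincide.
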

The proof is postponed to Section \ref{sec:ProofofInteractingParticSystem:PoC:thM2}.

We close with a result connecting the particle systems concerned with the McKean-Vlasov SDE under investigation. The first statement is classical saying that the IPS \eqref{P2} recovers, in the particle limit, the McKean-Vlasov SDE \eqref{P1}. The second statement, is novel and states that    the random periodic
solution associated with the IPS recovers, in the particle limit, that of the McKean-Vlasov SDE \eqref{P1}.
\begin{thm}[Uniform-in-time PoC]
\label{thm3}
Assume that  assumptions  of Theorem \ref{th1}   hold. 
Then, there exists a constant $C>0$ (independent of particle number $N$ and uniformly bounded in initial time $s$) such that for all $t\in\R$ and $i\ge1$,
\begin{equation}\label{P19} \E|X_{s,t}^{i}-{X}_{s,t}^{i,N}|^2
+ \E|X_t^{*,i}-{X}_t^{*,i,N}|^2\le C\varphi(N)
\to 0 \ \textrm{ as }\ N\to\infty,
\end{equation}
where 
$(X^i_{s,t})_{i\in \bS_N}$ is the unique solution of the non-interacting particle system \eqref{P1},  $(X^{*,i})_{i\in \bS_N}$ is the unique random periodic solution corresponding to  \eqref{P1}, 
$({ X}_{s,t}^{i,N})_{i\in \bS_N}$ is the unique solution associated with the interacting particle system \eqref{P2}, 
$({ X}_t^{*,i,N})_{i\in \bS_N}$ is the unique random periodic solution associated ${\bf X}^{*,N,N}$ related   to \eqref{P2}, and 
\begin{equation}\label{P7}
\varphi(N)=
\begin{cases}  
 N^{-\frac{1}{2}} +N^{-\frac{\vv_0}{2+\vv_0}},\qquad\qquad \qquad~~~ d<4,\\
 N^{-\frac{1}{2}}\log(1+N)+N^{-\frac{\vv_0}{2+\vv_0}},\qquad d=4,\\
 N^{-\frac{2}{d}}+N^{-\frac{\vv_0}{2+\vv_0}},~~~\qquad\qquad \qquad d>4,
\end{cases}
\end{equation}
where the precise value of $\vv_0\in(0,1)$ can be traced by a close inspection of the proof for Lemma
\ref{lem3} in Section $3.$

\end{thm}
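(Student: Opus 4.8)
The plan is to establish the two contributions to \eqref{P19} in turn: first the uniform-in-time propagation of chaos bound $\E|X_{s,t}^{i}-X_{s,t}^{i,N}|^2\le C\varphi(N)$, with $C$ independent of $N$ and of the pair $s\le t$, and then to transfer it to the random periodic solutions by letting $s\downarrow-\8$. For the first bound I would use that, by construction, the non-interacting particle $X_{s,t}^{i}$ of \eqref{P1} and the interacting particle $X_{s,t}^{i,N}$ of \eqref{P2} are driven by the \emph{same} Brownian motion $W^{i}$ and share the initial datum $X_{s,s}^{i}$, so this is a synchronous coupling with vanishing initial gap; put $u_{s,t}:=\E|X_{s,t}^{i}-X_{s,t}^{i,N}|^2$, which is independent of $i\in\bS_N$ by exchangeability and satisfies $u_{s,s}=0$. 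Applying It\^o's formula to $|X_{s,t}^{i}-X_{s,t}^{i,N}|^2$, taking expectations (the local martingale term has zero mean by the finite-time moment estimates), and invoking the combined one-sided condition \eqref{E20}, whose left-hand side already carries the Hilbert--Schmidt term generated by the It\^o correction of a synchronous coupling with multiplicative noise, yields
\begin{equation*}
\frac{\d}{\d t}u_{s,t}\le K_1(t)\,u_{s,t}+K_2(t)\,\E\,\mathbb W_2\big(\mu_{s,t},\hat\mu^{{\bf X}_{s,t}^{N,N}}\big)^2,\qquad u_{s,s}=0.
\end{equation*}

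The heart of the argument, and what I expect to be the main obstacle, is bounding $\E\,\mathbb W_2(\mu_{s,t},\hat\mu^{{\bf X}_{s,t}^{N,N}})^2$ \emph{without degrading the dissipativity constant}. Writing $\hat\mu^{{\bf X}_{s,t}^{N}}$ for the empirical measure of the non-interacting particles $(X_{s,t}^{j})_{j\in\bS_N}$, which are i.i.d.\ with law $\mu_{s,t}$, I would apply the triangle inequality for $\mathbb W_2$ in squared form together with Young's inequality with a free parameter $\delta>0$:
\begin{equation*}
\mathbb W_2\big(\mu_{s,t},\hat\mu^{{\bf X}_{s,t}^{N,N}}\big)^2\le(1+\delta^{-1})\,\mathbb W_2\big(\mu_{s,t},\hat\mu^{{\bf X}_{s,t}^{N}}\big)^2+(1+\delta)\,\mathbb W_2\big(\hat\mu^{{\bf X}_{s,t}^{N}},\hat\mu^{{\bf X}_{s,t}^{N,N}}\big)^2.
\end{equation*}
The last term is controlled by the identity pairing $X_{s,t}^{j}\mapsto X_{s,t}^{j,N}$, an admissible coupling, so $\E\,\mathbb W_2(\hat\mu^{{\bf X}_{s,t}^{N}},\hat\mu^{{\bf X}_{s,t}^{N,N}})^2\le\frac1N\sum_{j=1}^N\E|X_{s,t}^{j}-X_{s,t}^{j,N}|^2=u_{s,t}$ by exchangeability; the first term is a purely statistical error, for which I would invoke the quantitative Wasserstein law of large numbers of Fournier and Guillin with exponent $2+\vv_0$, obtaining $\E\,\mathbb W_2(\mu_{s,t},\hat\mu^{{\bf X}_{s,t}^{N}})^2\le C\varphi(N)$ with $\varphi(N)$ as in \eqref{P7}, granted the $(2+\vv_0)$-th moment of $\mu_{s,t}$ is finite and bounded uniformly in $s\le t$ --- the content of Lemma~\ref{lem3}, which also pins down $\vv_0\in(0,1)$. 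The point of the $\delta$-refinement is that a crude split into $2(\cdots)+2(\cdots)$ would leave the coefficient $K_1+2K_2$ in front of $u_{s,t}$ and demand the strictly stronger condition $\int_0^\tau(K_1+2K_2)<0$, whereas here the coefficient of $u_{s,t}$ is only $K_1+(1+\delta)K_2$ and the large factor $\delta^{-1}$ is harmless since it multiplies the $O(\varphi(N))$ term alone.

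Substituting these two estimates gives $\frac{\d}{\d t}u_{s,t}\le(K_1(t)+(1+\delta)K_2(t))\,u_{s,t}+(1+\delta^{-1})\,C\,K_2(t)\,\varphi(N)$. Because $\int_0^\tau\big(K_1(u)+(1+\delta)K_2(u)\big)\,\d u=-\lambda+\delta\int_0^\tau K_2(u)\,\d u<0$ for $\delta$ small enough, and all coefficients are $\tau$-periodic, the accumulated exponential propagator satisfies $\exp\!\big(\int_r^t(K_1+(1+\delta)K_2)\big)\le C_0\,e^{-c(t-r)}$ for some $C_0,c>0$; a Gr\"onwall argument together with $u_{s,s}=0$ then gives $u_{s,t}\le C\varphi(N)$ with $C$ independent of $N$ and of $s\le t$. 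Running the same scheme on the interacting system \eqref{P2} alone supplies the uniform-in-$(N,s,t)$ moment bounds invoked above (part of the preliminary estimates of Section~3).

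Finally, for the random periodic terms: since $(X_{s,t}^{j})_{j}$ are i.i.d.\ copies of the McKean--Vlasov solution, $X_{t}^{*,i}$ is an i.i.d.\ copy of $X_{t}^{*}$ and $X_{s,t}^{i}\to X_{t}^{*,i}$ in $L^2$ as $s\downarrow-\8$ by \eqref{P17}, while $X_{s,t}^{i,N}\to X_{t}^{*,i,N}$ in $L^2$ by \eqref{P15}. Continuity of the $L^2$-norm along these two convergences yields $\E|X_{t}^{*,i}-X_{t}^{*,i,N}|^2=\lim_{s\downarrow-\8}u_{s,t}\le C\varphi(N)$, and adding this to the bound from the previous step establishes \eqref{P19}.
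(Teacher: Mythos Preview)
Your proof is correct and follows essentially the same route as the paper. The paper packages the first part as a separate lemma (Lemma~\ref{lem5}), building the integrating factor $\e^{-\int_s^t K_*}$ with $K_*=K_1+(1+\vv_*)K_2$ directly into the It\^o formula rather than applying Gr\"onwall afterwards (your $\delta$ is their $\vv_*$), and for the random-periodic part uses the three-term triangle split $\E|X_t^{*,i}-X_t^{*,i,N}|^2\le 3\E|X_t^{*,i}-X_{s,t}^{i}|^2+3\E|X_{s,t}^{i}-X_{s,t}^{i,N}|^2+3\E|X_{s,t}^{i,N}-X_t^{*,i,N}|^2$ instead of your (equivalent but slightly tighter) direct passage to the limit of $u_{s,t}$.
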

The proof is postponed to Section \ref{sec:ProofofInteractingParticSystem:PoC:thM2}.

\subsection*{Main results under a partial dissipative condition}

The results presented in the previous section are of general nature under the dissipative condition \eqref{E20}. We now show that our results still  hold true when  \eqref{E20} is replaced  by a  partial dissipativity condition (see \eqref{X3} below) but at the expense of additional structural conditions on the dynamics of the McKean-Vlasov SDE. The proofs we present later on draw on coupling arguments (see e.g. \cite{wang2021exponential}) which is a fundamentally different approach with contrast to that  used to establish the previous results.

We now work with McKean-Vlasov SDEs of the following type (see Remark \ref{remark why the label{*X} equation has that shape} for more details):  
\begin{equation}\label{*X}
\d X_{s,t}=\big(\,\hat b_t(X_{s,t})+(\tilde b_t*\mathscr L_{X_{s,t}})(X_{s,t})\big)\,\d t+\sqrt{\alpha_t}\d B_t+\hat \si_t(X_{s,t})\,\d W_t, \quad (t,s)\in\triangle,
\end{equation}
where $\hat{b}:\R\times \R^d\to\R^d$, for any $x\in \bR^d$ and $\mu \in \mathscr P_1(\bR^d)$,  $(\tilde{b}_t*\mu)(x):=\int_{\R^d}\tilde{b}_t(x,y)\,\mu(\d y)$ with 
$\tilde{b}:\R\times \R^d\times\R^d\to\R^d$,
$\hat \sigma:\R\times\R^d\to\R^d\otimes\R^d$, $\alpha:\R\to\R_+$, 
and $(W_t)$ and $(B_t)$ are two independent two-sided Brownian motions.

\begin{remark}
\label{remark why the label{*X} equation has that shape}
Note that two independent identically distributed noises $(B_t)$ and $(W_t)$ are involved in \eqref{*X}, which  makes the formulation \eqref{*X} non-standard. Furthermore, we would like to stress that  \eqref{*X} is not a  McKean-Vlasov SDE with common noise since the distribution involved is deterministic (rather than random). Indeed, instead of \eqref{*X}, this paper could have focused on the more general McKean-Vlasov SDE: 
\begin{equation}\label{*X*}
\d X_{s,t}=\big(\,\hat b_t(X_{s,t})+(\tilde b_t*\mathscr L_{X_{s,t}})(X_{s,t})\big)\,\d t+\si_t(X_{s,t})\,\d W_t, \quad (t,s)\in\triangle,
\end{equation}
where $(\hat b,\tilde b, W)$ would be as in \eqref{*X} and $\si:\R\times \R^d\to\R^d\otimes\R^d$ is measurable. As long as  the diffusion term $\si$ satisfies a uniform ellipticity condition, one can decompose (in the sense of distribution) the term $\si_t(X_{s,t})\,\d W_t$ into two parts: one part resembles ``$\sqrt{\alpha_t}\d B_t$'' and the other one ``$\hat \si_t(X_{s,t})\,\d W_t $''. 
Based on this point of view, we prefer the McKean-Vlasov SDE \eqref{*X} in lieu of \eqref{*X*}. Most importantly, the formulation \eqref{*X} makes it easier to deploy an asymptotic reflection coupling for the additive noise, and a synchronous coupling for the multiplicative counterpart (see   Remark \ref{remark on couplings} for further details). 
\end{remark}

Assume that 
\begin{enumerate}
\item[(\bf H)] Fix $\tau \in (0,\infty)$. Let $\R\ni t\mapsto \hat b_t({\bf0})$, $\R\ni t\mapsto \hat \sigma_t({\bf0})$, $\R\ni t\mapsto\tilde b_t({\bf0},{\bf0})$ and $\R\ni t\mapsto\alpha_t$ are $\tau$-periodic and there exist
constants $K_0,\ell_0\ge0, K_1,K_2,K_3>0$ such that for all $t\in\R$ and $x,y,\tilde x,\tilde y\in\R^d$, 
\begin{align}\label{X3}
\nonumber
\<x-y,\hat b_t(x)-\hat b_t(y)\>
& +\frac{1}{2}\|\hat \sigma_t(x)-\hat \sigma_t(y)\|_{\rm HS}^2
\\
&\le \alpha_t\big(K_0|x-y|^2\I_{\{|x-y|\le\ell_0\}}-K_1|x-y|^2\I_{\{|x-y|>\ell_0\}}\big), 
\end{align}
and
\begin{equation}\label{XX}
|\tilde{b}_t(x,y)-\tilde{b}_t(\tilde x,\tilde y)|\le K_2\alpha_t\big(|x-\tilde x|+|y-\tilde y|\big),\quad \|\hat \sigma_t(x)-\hat \sigma_t(y) \|_{\rm HS}^2\le K_3\alpha_t|x-y|^2,
\end{equation}
where $\I_\cdot$ means  the indicator function.
\end{enumerate}

Below, we provide some examples to show that the prerequisite \eqref{X3} is valid.    Let
\begin{align*}
U(x)=x^2g_n(x)^2+a^2-2axg_n(x),\quad x\in\R   
\end{align*}
for some parameters $a>0$ and $n\ge1$, where 
\begin{equation*}
g_n(x)=
 \begin{cases}
   -n,\quad x\in(-\8,-n)\\
   x,\quad~~~ x\in[-n,n)\\
  n,\quad~~~ x\in(n,\8).
 \end{cases}   
\end{equation*}
Then, as shown in \cite{majka2020nonasymptotic}, 
$\hat b_t(x)=-\alpha_t U'(x),  $ along with $\hat \si_t(x)\equiv0$, 
satisfies \eqref{X3}, where  $\R\ni t\mapsto \alpha_t\in[0,\8)$ is a $\tau$-periodic function. In addition, for the $\tau$-periodic function $\alpha_\cdot$ above, we set 
$\hat b_t(x)=-\alpha_t\nn_x U(x)$ with $U(x)=\frac{1}{4}|x|^4-\frac{1}{2}|x|^2, x\in\R^d$. Then, by the aid of   \cite[Example 5.3]{cattiaux2014semi}, 
$\hat b_t$   satisfies \eqref{X3} in case of  $\hat \si_t(x)\equiv0$ for the high-dimensional setting. 
\color{black}

\begin{thm}\label{thm*}
{Assume  ({\bf H}) with $\int_0^\tau\alpha_u\,\d u\in(0,\8)$.} Then,  there exists a constant $K_2^*>0$ such that, for all $K_2\in(0,K_2^*]$, the stochastic semi-flow $\phi$, associated with \eqref{*X} and defined as in \eqref{E22}, has a unique  random $\tau$-periodic solution in the sense of distribution. Namely, there exists a unique (in the sense of distribution) stochastic process  $X^*_t\in L^1(\OO\to\R^d,\F_t,\P)$  such that for all $(s,t)\in\triangle$ and $ h\ge0,$ 
\begin{equation*}
X^*_{t+h}(\omega)\overset{d}{=}\phi(t+h, t, X^*_t(\omega),\omega),\qquad X^*_{t+\tau}(\omega)\overset{d}{=}X^*_t(\theta_\tau\omega).
\end{equation*}
Moreover, for all $t\in\R$ and $\xi\in L^1(\OO\to\R^d,\F_s,\P)$,
$$\lim_{s\downarrow-\8}X^{\xi}_{s,t}\overset{d}{=}X^*_t.$$
\end{thm}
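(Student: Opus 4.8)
The plan is to adapt the strategy of \cite{bao2022random} (cf.\ Proposition~2.1 there) to the present measure-dependent setting: the key is to establish exponential ergodicity of the two-point motion under a carefully designed transportation cost, and then harvest the random periodic solution in the sense of distribution from it. Concretely, I would first fix the coupling: for two copies of \eqref{*X} started from $\xi,\eta$ at time $s$, I would run an asymptotic reflection coupling on the additive component $\sqrt{\alpha_t}\,\d B_t$ and a synchronous coupling on the multiplicative component $\hat\si_t(X_{s,t})\,\d W_t$ (as flagged in Remark~\ref{remark why the label{*X} equation has that shape} and Remark~\ref{remark on couplings}). Writing $Z_{s,t}=X_{s,t}^\xi-X_{s,t}^\eta$, I would introduce a concave, increasing function $f$ (of Eberle type, built from the constants $K_0,\ell_0,K_1$ in \eqref{X3}) and a distance-like functional $r_t=f(|Z_{s,t}|)$, and then apply Itô's formula. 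The drift of $f(|Z_{s,t}|)$ is controlled using \eqref{X3} on the region $\{|Z|>\ell_0\}$ and the concavity of $f$ on $\{|Z|\le\ell_0\}$; the convolution term $(\tilde b_t*\mathscr L_{X_{s,t}})$ contributes, via \eqref{XX}, a term of order $K_2\alpha_t\big(|Z_{s,t}|+\mathbb W_1(\mathscr L_{X_{s,t}^\xi},\mathscr L_{X_{s,t}^\eta})\big)$; the reflection part produces a strictly negative second-order term on the additive noise. Choosing $K_2\le K_2^*$ small enough that the $K_2$-contribution is dominated by the contraction from the reflection coupling, I obtain
\begin{equation*}
\mathbb W_1\big(\mathscr L_{X^\xi_{s,t}},\mathscr L_{X^\eta_{s,t}}\big)\le C\,\mathrm e^{-c\int_s^t\alpha_u\,\d u}\,\mathbb W_1(\mathscr L_\xi,\mathscr L_\eta),\qquad (s,t)\in\triangle,
\end{equation*}
with $c>0$; since $\int_0^\tau\alpha_u\,\d u\in(0,\infty)$ and $\alpha_\cdot$ is $\tau$-periodic, $\int_s^t\alpha_u\,\d u\to\infty$ as $t-s\to\infty$, giving genuine exponential-in-horizon contraction.

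Next I would build the candidate periodic solution. Define $P^*_{s,t}$ to be the (nonlinear) flow on $\mathscr P_1(\R^d)$ sending $\mathscr L_\xi\mapsto\mathscr L_{X^\xi_{s,t}}$; by $\tau$-periodicity of the coefficients this flow satisfies $P^*_{s+\tau,t+\tau}=P^*_{s,t}$, so the family $\{P^*_{s+n\tau,\,t+n\tau}\}_n$ is really a single map iterated. Fixing $t$ and considering $s=t-n\tau\downarrow-\infty$, the contraction estimate shows $\{P^*_{t-n\tau,\,t}(\mu)\}_{n\ge1}$ is Cauchy in $(\mathscr P_1(\R^d),\mathbb W_1)$ uniformly in $\mu$, hence converges to a limit $\mu^*_t\in\mathscr P_1(\R^d)$ independent of $\mu$; the map $t\mapsto\mu^*_t$ is then $\tau$-periodic and satisfies $P^*_{s,t}\mu^*_s=\mu^*_t$ for all $(s,t)\in\triangle$. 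To promote this to a process-level statement I would use the Lions-lift-type construction referenced after \cite{feng2020random}: on an enlarged probability space realise, for each $t$, an $\mathscr F_t$-measurable $X^*_t$ with $\mathscr L_{X^*_t}=\mu^*_t$ and consistent with the Wiener shift, so that $\phi(t+h,t,X^*_t,\cdot)\overset{d}{=}X^*_{t+h}$ and $X^*_{t+\tau}(\omega)\overset{d}{=}X^*_t(\theta_\tau\omega)$; uniqueness in law follows because any two distributional random periodic solutions have laws that are fixed by the same contraction map. Finally the convergence $\lim_{s\downarrow-\infty}X^\xi_{s,t}\overset{d}{=}X^*_t$ is exactly the statement that $\mathbb W_1(\mathscr L_{X^\xi_{s,t}},\mu^*_t)\to0$, which is immediate from the contraction bound applied with $\eta$ distributed as $\mu^*_s$.

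The main obstacle I anticipate is the self-referential nature of the contraction: unlike in the classical SDE case, the convolution term feeds back the law $\mathscr L_{X_{s,t}}$ itself, so the Grönwall-type argument after Itô's formula does not close on $\E f(|Z_{s,t}|)$ alone — one gets $\tfrac{\d}{\d t}\E r_t\lesssim \alpha_t\big(-c\,\E r_t + K_2\,\mathbb W_1(\mathscr L_{X^\xi_{s,t}},\mathscr L_{X^\eta_{s,t}})\big)$, and $\mathbb W_1(\mathscr L_{X^\xi_{s,t}},\mathscr L_{X^\eta_{s,t}})\le \E|Z_{s,t}|$ must be reconciled with $\E r_t=\E f(|Z_{s,t}|)$ through the two-sided bounds $c_1 r\le f(r)\le r$ coming from the construction of $f$. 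This forces the smallness threshold $K_2^*$ and requires tracking the comparison constants carefully; a secondary technical point is justifying the reflection-coupling SDE is well-posed here (the coefficients are only locally Lipschitz because of the cubic-type examples), which I would handle by the usual localisation-plus-a priori-moment-bound argument, noting moment bounds uniform in the horizon follow from the dissipativity in \eqref{X3} together with $\int_0^\tau\alpha_u\,\d u>0$.
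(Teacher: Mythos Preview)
Your proposal is correct and follows essentially the same route as the paper: the paper packages the reflection/synchronous coupling argument yielding the $\mathbb W_1$-contraction as Lemma~\ref{lem5.2}, the uniform moment bound as Lemma~\ref{lem5.1}, and then (together with the semi-flow and periodicity properties from Lemmas~\ref{lem1}--\ref{lem2}) invokes \cite[Proposition~2.1]{bao2022random} directly rather than unpacking the Cauchy-sequence/lift construction you sketch. Your identification of the self-referential feedback as the source of the smallness threshold $K_2^*$, and of the two-sided comparison $c_1 r\le f(r)\le (1+c_1)r$ as the device that closes the Gr\"onwall loop, matches the paper's mechanism exactly.
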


Before proceeding, we comment on the random periodic solutions and time-periodic laws for McKean-Vlasov SDEs. 
\begin{remark}
{
As is known, the distribution flow $(\mu_t)_{t\ge0}$ of a McKean-Vlasov SDE solves a non-linear Fokker-Planck equation. In terminology, the deterministic  $(\mu_t)_{t\ge0}$ is said to be $\tau$-periodic if and only if $\mu_{i\tau}=\mu_0$ for any $i\ge1$ and $\mu_s\neq \mu_0$ for all $0\le s<\tau$; see, for instance,  \cite{Marini2023etalfrustratednetworks}.
The existence of time-periodic laws for McKean-Vlasov SDEs arising from the stochastic Brusselator was investigated in \cite{scheutzow1986periodic}, where the main tool used was the Schauder-Tychonoff fixed point theorem. 
To address the issue of time-periodic laws for (16) based on the strategy of \cite{scheutzow1986periodic},  additional non-trivial work would be required: i.e., choice of a weakly compact and convex sub-space on the Wasserstein space, construction of an appropriate mapping, and examination of weak continuity of the underlying mapping. 

Lastly, the notions and analyses for time-periodic laws and random periodic solutions are entirely different although the objects involved are related (but very distinct). 
The subject involved in time-periodic laws is deterministic, whereas the counterpart we discuss concerned with random periodic solutions is a stochastic process. 
}
\end{remark}

Concerning the McKean-Vlasov SDE \eqref{*X}, 
the corresponding  stochastic non-interacting particle system  and stochastic IPS are given respectively  as below: for $(s,t)\in\Delta$, 
\begin{equation}\label{P1-}
\d X_{s,t}^{i}=\big(\,\hat{b}_t(X_{s,t}^{i})+(\tilde{b}_t*\mu_{s,t}^i)(X_{s,t}^{i})\big)\,\d t+\sqrt{\alpha_t} \d B_t^i+\hat \si_t(X_{s,t}^{i})\,\d W_t^i
,  
\end{equation}
and 
\begin{equation}\label{P2-}
\d X_{s,t}^{i,N}
=
\bigg(\hat{b}_t(X_{s,t}^{i,N})+\frac{1}{N}\sum_{j=1}^N\tilde{b}_t(X_{s,t}^{i,N},X_{s,t}^{j,N})
\bigg)\,\d t+ \sqrt{\alpha_t} \d B_t^i
+\hat \si_t(X_{s,t}^{i,N})\,\d W_t^i
,
\end{equation}
where   $(X_{s,s}^i,B_t^{i},W_t^{i})_{i\in \mathbb S_N}$  are i.i.d. copies of $(X_{s,s},B_t,W_t)$,  $\mu_{s,t}^i:=\mathscr L_{X_{s,t}^{i}}$,  $(X_{s,s}^i,X_{s,s}^{i,N})_{i\in\mathbb S_N}$  are i.i.d. random variables.

\begin{thm}\label{thm4}
Assume  ({\bf H}) with $\int_0^\tau\alpha_u\,\d u\in(0,\8)$. Then, there exists a constant $K_2^*>0$ such that, for all $K_2\in(0,K_2^*]$ and  $N\ge1$,
the stochastic semi-flow $\phi^N$  corresponding to \eqref{P2-} and defined as in  \eqref{1T}, admits  a unique (in the sense of distribution) random $\tau$-periodic solution. That is, 
there exists a unique (in the sense of distribution) stochastic process $ {\bf X}^{*,N,N}_t =\text{vec}\big(X_{t}^{*,1,N},\cdots,X_{t}^{*,N,N}\big)\in L^2(\Omega\to(\R^{d})^N,\F_t,\P)$ such that  for all $t\in\R$ and $h\ge0$,
 \begin{equation*} 
{\bf X}^{*,N,N}_{t+h}(\omega)\overset{d}{=}\phi(t+h, t, {\bf X}^{*,N,N}_t(\omega),\omega),\qquad {\bf X}^{*,N,N}_{t+\tau}(\omega)\overset{d}{=}{\bf X}^{*,N,N}_t(\theta_\tau\omega).
\end{equation*}
Moreover,  for all $ t \in\R$ and $ \xi^N \in L^2(\Omega\to(\R^{d})^N,\F_s,\P)$,
\begin{equation*} 
    \lim_{s\downarrow-\8}{\bf X}^{N,N,\xi^N}_{s,t}\overset{d}{=}{\bf X}^{*,N,N}_t.
\end{equation*}
\end{thm}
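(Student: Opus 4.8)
The plan is to run, for the interacting particle system \eqref{P2-}, the same mechanism used for Theorem~\ref{thm*}, exploiting that \eqref{P2-} is a \emph{classical} (measure-independent), time-$\tau$-periodic SDE on the product space $(\R^d)^N$ driven by the independent two-sided noises $(B^i,W^i)_{i\in\mathbb S_N}$. First I would record well-posedness: under (\textbf{H}) the drift $\mathbf x\mapsto\big(\hat b_t(x^i)+\tfrac1N\sum_{j}\tilde b_t(x^i,x^j)\big)_{i\in\mathbb S_N}$ is globally one-sided Lipschitz on $(\R^d)^N$ (combine the two regimes of \eqref{X3} with the Lipschitz bound \eqref{XX}) and the diffusion $\mathbf x\mapsto(\hat\sigma_t(x^i))_i$ is globally Lipschitz, so \eqref{P2-} admits a unique non-explosive strong solution for every square-integrable $\F_s$-measurable initial datum, with time-uniform moment bounds on $\E\sum_i|X^{i,N}_{s,t}|^2$; hence $\phi^N$ of \eqref{1T} is a genuine stochastic semi-flow. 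By \cite[Proposition 2.1]{bao2022random} (the device already invoked for Theorem~\ref{thm*}), it then suffices to establish a time-uniform exponential contraction of a suitable Wasserstein-type distance on $\mathscr P((\R^d)^N)$: the existence and in-law uniqueness of the random $\tau$-periodic solution $\mathbf X^{*,N,N}$, together with $\lim_{s\downarrow-\infty}\mathbf X^{N,N,\xi^N}_{s,t}\overset{d}{=}\mathbf X^{*,N,N}_t$, follow from that proposition, the $\tau$-periodicity of the coefficients turning the time-uniform contraction into a strict contraction of the one-period map.

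To produce the contraction I would couple two copies of \eqref{P2-} with initial laws represented by $\xi^N,\eta^N$ \emph{componentwise}: an asymptotic (regularized) reflection coupling on the additive noise $\sqrt{\alpha_t}\,\d B^i_t$, with the regularization parameter $\varepsilon\downarrow0$ sent to $0$ at the end, and a synchronous coupling on the multiplicative noise $\hat\sigma_t(\cdot)\,\d W^i_t$ --- precisely the scheme of Remark~\ref{remark on couplings} --- which is legitimate because the noises are independent across $i$. On $\mathscr P((\R^d)^N)$ I would use the \emph{additive} distance $\mathbf W_N(\mu,\nu)=\inf\E\sum_{i=1}^N f(|X^i-Y^i|)$, the infimum over couplings, where $f:\R_+\to\R_+$ is the concave profile from the proof of Theorem~\ref{thm*} (of Eberle type, in the spirit of \cite{eberle2016reflection}): increasing, bounded, with $f(0)=0$ and $0<c_f\le f'\le1$, built from the data $(K_0,\ell_0,K_1)$ of \eqref{X3} so that, writing $r_i=|X^i_t-Y^i_t|$, the generator $\mathscr A^{(i)}$ of the coupled $i$-th pair obeys $\mathscr A^{(i)}f(r_i)\le-\alpha_t\big(\kappa f(r_i)\I_{\{r_i\le\ell_0\}}+\kappa r_i f'(r_i)\I_{\{r_i>\ell_0\}}\big)$ for some $\kappa>0$; here the second-order term from the reflected additive noise defeats the non-dissipative core $\{r_i\le\ell_0\}$, and the strong dissipativity $-K_1$ of \eqref{X3} handles the far field together with the synchronously-coupled multiplicative term.

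Applying It\^o's formula to $t\mapsto\sum_i f(r_i(t))$ along the coupling, taking expectations, and letting $\varepsilon\downarrow0$, the intrinsic part contributes the above negative quantity summed over $i$, while the interaction contributes, via \eqref{XX}, at most $K_2\alpha_t\sum_i f'(r_i)\big(r_i+\tfrac1N\sum_j r_j\big)\le C\,K_2\alpha_t\sum_i r_i f'(r_i)$ for an absolute constant $C$ (using $c_f\le f'\le1$; the $\tfrac1N\sum_j$ averaging is exactly what keeps $C$ free of $N$). Splitting $\sum_i r_i f'(r_i)$ according to whether $r_i\le\ell_0$ (where concavity and $f(0)=0$ give $r_i f'(r_i)\le r_i\le\tfrac{\ell_0}{f(\ell_0)}f(r_i)$) or $r_i>\ell_0$ (kept as is), this interaction term is absorbed into the intrinsic negative terms provided $K_2\le K_2^*:=\kappa\min\!\big(\tfrac{f(\ell_0)}{2C\ell_0},\tfrac1{2C}\big)$, a threshold depending only on the constants in (\textbf{H}) and \emph{not} on $N$. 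One then obtains $\tfrac{d}{dt}\E\sum_i f(r_i(t))\le-\tfrac{\kappa}{2}\alpha_t\,\E\sum_i f(r_i(t))$, hence $\mathbf W_N(\mathscr L_{\mathbf X^{\xi^N}_{s,t}},\mathscr L_{\mathbf X^{\eta^N}_{s,t}})\le e^{-\frac{\kappa}{2}\int_s^t\alpha_u\,\d u}\,\mathbf W_N(\mathscr L_{\xi^N},\mathscr L_{\eta^N})$, which, since $\int_0^\tau\alpha_u\,\d u>0$, is a strict one-period contraction uniformly in $s$ and $N$ --- exactly what \cite[Proposition 2.1]{bao2022random} requires.

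The step I expect to be the crux is the choice of metric. One cannot simply transplant a reflection-coupling distance $f(|\mathbf x-\mathbf y|)$ on $(\R^d)^N$: the $N$-fold product of the non-dissipative core $\{|x^i-y^i|\le\ell_0\}$ is an \emph{unbounded} region of $(\R^d)^N$ on which the particle-system drift is anti-dissipative (take $x^i-y^i\equiv\ell_0/2$ for all $i$), and no reflection coupling in a single direction can absorb it. The remedy is the additive distance $\sum_i f(|x^i-y^i|)$, which treats each coordinate on its own --- placing it in the reflection regime if $|x^i-y^i|\le\ell_0$ and in the dissipative regime otherwise --- and for which the empirical-mean interaction terms telescope when summed over $i$, the $1/N$ exactly cancelling the resulting factor $N$ so that $K_2^*$ stays $N$-free. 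The remaining care, routine but to be recorded, is the $\varepsilon\downarrow0$ passage in the asymptotic reflection coupling (needed because $\hat\sigma_t\not\equiv0$ precludes an exact reflection coupling) and the time-uniform moment bounds that make the relevant stochastic integrals true martingales, both following from the one-sided-Lipschitz and linear-growth structure in \eqref{X3}--\eqref{XX}.
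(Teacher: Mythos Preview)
Your proposal is correct and follows the same strategy as the paper: the paper writes \eqref{P2-} as a classical SDE on $(\R^d)^N$, records the componentwise partially-dissipative inequality
\[
\langle \mathbf x^N-\mathbf y^N,\mathbf b_t(\mathbf x^N)-\mathbf b_t(\mathbf y^N)\rangle+\tfrac12\|\bm\sigma_t(\mathbf x^N)-\bm\sigma_t(\mathbf y^N)\|_{\rm HS}^2\le \alpha_t(K_0+K_1)\sum_i|x_i-y_i|^2\I_{\{|x_i-y_i|\le\ell_0\}}-(K_1-2K_2)\alpha_t|\mathbf x^N-\mathbf y^N|^2,
\]
then obtains the $\mathbb W_1$-contraction ``by repeating the argument of Lemma~\ref{lem5.2}'' and closes with \cite[Proposition 2.1]{bao2022random}---precisely your componentwise asymptotic-reflection/synchronous coupling against the additive functional $\sum_i f(|Z^{i,N}|)$. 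One harmless slip: your $f$ cannot be both bounded and satisfy $f'\ge c_f>0$; the paper's choice \eqref{YY} grows linearly with $c_1\le f'\le 1+c_1$, which is all you need, and your explanation of why the additive metric (rather than $f(|\mathbf x-\mathbf y|)$ on $(\R^d)^N$) is forced is a point the paper leaves implicit.
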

We close this section with the final result of the particle limit $N\to \infty$ in the sense of distribution.
\begin{thm}\label{thm6}
Let  assumptions of Theorem \ref{thm4} hold. Then, for any $t\in\R$ and $i\ge1$,
\begin{equation*}
\lim_{N\to\8}{ X}^{*,i,N}_t\overset{d}{=}X_t^{*,i}.
\end{equation*}

\end{thm}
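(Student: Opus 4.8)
The plan is to establish this as a corollary of a uniform-in-time (in particular, uniform in the initial time $s$) propagation of chaos estimate for the partially dissipative system \eqref{P1-}--\eqref{P2-}, analogous to Theorem \ref{thm3} but formulated in the Wasserstein-1 distance rather than in $L^2$. Concretely, I would first prove that there is a constant $C>0$, independent of $N$ and uniformly bounded in $s$, such that for all $t\in\R$ and $i\ge1$,
\begin{equation*}
\mathbb W_1\big(\mathscr L_{X_{s,t}^i},\mathscr L_{X_{s,t}^{i,N}}\big)\le C\,\psi(N)\to 0\quad\text{as }N\to\infty,
\end{equation*}
for some explicit rate $\psi(N)$ (the classical Fournier--Guillin type rate, as in \eqref{P7}, combined with the moment bounds available under \eqref{X3}). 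The key tool here is the coupling machinery developed earlier in the paper: one couples the interacting particle $X^{i,N}$ with the non-interacting copy $X^i$ by driving them with the same Brownian motions $(B^i,W^i)$ (synchronous coupling for the common additive part $\sqrt{\alpha_t}\,dB^i$ and the multiplicative part $\hat\sigma_t\,dW^i$), estimate the difference via the contraction/transportation-cost functional used for Theorem \ref{thm*}, and control the drift discrepancy $\frac1N\sum_j\tilde b_t(X^{i,N},X^{j,N})-(\tilde b_t*\mu_{s,t}^i)(X^i)$ by splitting it into a Lipschitz coupling term (absorbed by the smallness of $K_2$) plus the empirical-measure fluctuation term $\mathbb W_1(\hat\mu^{{\bf X}^N}_{s,t},\mu_{s,t})$, whose expectation is bounded uniformly in $t$ and $s$ using the uniform-in-time moment estimates and the partial dissipativity \eqref{X3}.

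Granting this uniform PoC bound, the proof of the theorem proceeds by a limit-interchange argument exactly mirroring the diagram in the introduction. Fix $t\in\R$. By Theorem \ref{thm4}, $\mathscr L_{{\bf X}^{*,N,N}_t}$ is the distributional limit as $s\downarrow-\infty$ of $\mathscr L_{{\bf X}^{N,N,\xi^N}_{s,t}}$, and in particular $\mathscr L_{X^{*,i,N}_t}=\lim_{s\downarrow-\infty}\mathscr L_{X^{i,N}_{s,t}}$; likewise, by Theorem \ref{thm*}, $\mathscr L_{X^{*,i}_t}=\lim_{s\downarrow-\infty}\mathscr L_{X^i_{s,t}}$ in $\mathbb W_1$. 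Then for any $s\le t$,
\begin{equation*}
\mathbb W_1\big(\mathscr L_{X^{*,i,N}_t},\mathscr L_{X^{*,i}_t}\big)\le \mathbb W_1\big(\mathscr L_{X^{*,i,N}_t},\mathscr L_{X^{i,N}_{s,t}}\big)+\mathbb W_1\big(\mathscr L_{X^{i,N}_{s,t}},\mathscr L_{X^{i}_{s,t}}\big)+\mathbb W_1\big(\mathscr L_{X^{i}_{s,t}},\mathscr L_{X^{*,i}_t}\big).
\end{equation*}
The middle term is $\le C\psi(N)$ uniformly in $s$; the first and third terms tend to $0$ as $s\downarrow-\infty$ for each fixed $N$ (and, for the first term, with a bound uniform in $N$ coming from the exponential contraction rate, which is $N$-independent). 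Sending $s\downarrow-\infty$ first and then $N\to\infty$ yields $\mathbb W_1(\mathscr L_{X^{*,i,N}_t},\mathscr L_{X^{*,i}_t})\to 0$, which is the claimed distributional convergence.

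The main obstacle I anticipate is proving that the PoC estimate is genuinely uniform in the initial time $s$ (not merely uniform in $t$ for fixed $s$), since this is what legitimises exchanging the two limits. In the fully dissipative case this follows from the synchronous coupling and Gronwall-type arguments; in the partially dissipative case one must instead exploit the contraction of the specially designed transportation-cost functional (the one built in Section 5), and verify that the empirical-measure error term $\mathbb E\,\mathbb W_1(\hat\mu^{{\bf X}^N}_{s,t},\mu_{s,t})$ admits a bound that does not deteriorate as $s\to-\infty$ — this in turn requires uniform-in-$(s,t)$ $p$-th moment bounds for the particle system, which themselves rest on the dissipativity outside the ball $\{|x-y|\le\ell_0\}$ in \eqref{X3} together with the periodicity and integrability of $\alpha$. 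A secondary technical point is ensuring the smallness threshold $K_2^*$ from Theorems \ref{thm*} and \ref{thm4} is compatible (one may simply take the minimum), so that both the mean-field limit and the particle-system limit sit inside the contractive regime simultaneously.
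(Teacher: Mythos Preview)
Your overall architecture is exactly the paper's: the proof of Theorem~\ref{thm6} is just the triangle inequality
\[
\mathbb W_1\big(\mathscr L_{X^{*,i,N}_t},\mathscr L_{X^{*,i}_t}\big)\le \mathbb W_1\big(\mathscr L_{X^{*,i,N}_t},\mathscr L_{X^{i,N}_{s,t}}\big)+\mathbb W_1\big(\mathscr L_{X^{i,N}_{s,t}},\mathscr L_{X^{i}_{s,t}}\big)+\mathbb W_1\big(\mathscr L_{X^{i}_{s,t}},\mathscr L_{X^{*,i}_t}\big),
\]
together with Theorems~\ref{thm*} and~\ref{thm4} for the outer terms and a uniform-in-time PoC bound (the paper's Lemma~\ref{lem5.3}) for the middle term.

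There is, however, a genuine gap in how you propose to obtain that PoC bound. You say you will couple $X^{i,N}$ and $X^i$ by \emph{synchronous coupling for the common additive part} $\sqrt{\alpha_t}\,\d B^i$. Under the partial dissipativity \eqref{X3} this does not deliver contraction: synchronous coupling only produces the drift term $f'(r)\varphi(r)$, which is \emph{positive} on $\{r\le\ell_0\}$, and no amount of clever choice of the transportation-cost function $f$ will make that negative on its own. The mechanism that rescues contraction in the paper is the (approximate) \emph{reflection} coupling on the additive noise: reflecting $\d B^i$ across the hyperplane orthogonal to $Z^{i,N}$ generates the extra second-order term $2f''(r)\phi_\varepsilon(r)$, and it is precisely the concavity $f''<0$ that beats the bad drift inside the ball. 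The paper's Lemma~\ref{lem5.3} implements exactly this mixed coupling (reflection on the additive noise, synchronous on the multiplicative noise; see Remark~\ref{remark on couplings}), and without it the estimate you need uniformly in $s$ would blow up.

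A secondary point: you plan to control the empirical-measure fluctuation via the Fournier--Guillin rate $\varphi(N)$ as in \eqref{P7}. That would work, but the paper does better here because the interaction enters only through the convolution $\tilde b_t*\mu$: one can bound $\Upsilon_{s,t}^i$ directly by a variance computation (i.i.d.\ sum of centred terms) and obtain the dimension-free rate $N^{-1/2}$ (see \eqref{L1} and Remark~\ref{remark on couplings}). This is not a correctness issue, just a sharper route available because of the structural assumption on $\tilde b$.
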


The proofs from Theorem \ref{thm*} to Theorem \ref{thm6} are postponed to Section \ref{sec:ProofofInteractingParticSystem:PoC:thM2}.


\section{Proof of Theorem \ref{th1}}
\label{sec:ProofofMVSDEwellposednesstheo:th1}
Before proceeding to complete the proof of Theorem \ref{th1},   we state and prove several preliminary lemmas. The first lemma we present shows that the solution process to \eqref{E1} has a uniformly bounded mean-square moment over $\Delta$. 

\begin{lemma}[Square-moment estimate] \label{lem3}
Under assumptions of Theorem \ref{th1}, there exist constants $\lambda, C>0$ and $\vv_0\in[0,1)$ such that for all $(s,t)\in\Delta$ and $\xi\in L^{2+\vv_0}(\OO\to\R^d,\F_s,\P)$, it holds that 
\begin{equation}\label{WW}
\E|X_{s,t}^\xi|^{2+\vv_0}\le C\big(1+\e^{-\lambda(t-s)}\E|\xi|^{2+
\vv_0}\big).
\end{equation}
\end{lemma}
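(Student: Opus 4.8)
The plan is to apply It\^o's formula to $|X_{s,t}^\xi|^{2+\vv_0}$ (or, to avoid singularity of the derivative at the origin, to $(\varepsilon+|X_{s,t}^\xi|^2)^{(2+\vv_0)/2}$ and then let $\varepsilon\downarrow 0$), combine the dissipativity encoded in \eqref{E20} with a Gronwall-type argument exploiting the fact that $\int_0^\tau(K_1+K_2)\,\d u=-\lambda<0$, and close the estimate by controlling the appearance of $\mathbb W_2(\mathscr L_{X_{s,t}},\delta_{\bf0})^2=\E|X_{s,t}|^2$ on the right-hand side by the same quantity being estimated. First I would rewrite the drift/diffusion monotonicity in centred form: using \eqref{E20} with $y={\bf0}$, $\nu=\delta_{\bf0}$ gives $2\<x,b_t(x,\mu)\>+\|\sigma_t(x,\mu)\|_{\rm HS}^2\le K_1(t)|x|^2+K_2(t)\mathbb W_2(\mu,\delta_{\bf0})^2 + (\text{lower-order terms in } b_t({\bf0},\delta_{\bf0}),\sigma_t({\bf0},\delta_{\bf0}))$, where the lower-order terms are handled by Young's inequality and are bounded since $t\mapsto |b_t({\bf0},\delta_{\bf0})|^2+\|\sigma_t({\bf0},\delta_{\bf0})\|_{\rm HS}^2$ is continuous and $\tau$-periodic, hence bounded.

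The key steps, in order, are: (1) derive the differential inequality
\[
\d\, \E|X_{s,t}|^{2+\vv_0} \le \Big( c_1(t)\,\E|X_{s,t}|^{2+\vv_0} + c_2(t)\,\big(\E|X_{s,t}|^2\big)^{(2+\vv_0)/2} + c_3(t)\Big)\,\d t,
\]
where $c_1,c_2,c_3$ are built from $K_1,K_2,K_3$ and the boundary data, and are $\tau$-periodic and locally integrable; the factor $\vv_0$ comes from the extra power and the It\^o correction term, and choosing $\vv_0$ small keeps the effective exponential rate negative. Here I would use $\E|X_{s,t}|^2\le (\E|X_{s,t}|^{2+\vv_0})^{2/(2+\vv_0)}$ (Jensen) so that the $c_2$-term is dominated by $c_2(t)\E|X_{s,t}|^{2+\vv_0}$, absorbing it into $c_1$. (2) Apply the integral form of Gronwall's lemma to get
\[
\E|X_{s,t}|^{2+\vv_0}\le \e^{\int_s^t (c_1(u)+c_2(u))\,\d u}\,\E|\xi|^{2+\vv_0} + \int_s^t \e^{\int_r^t (c_1(u)+c_2(u))\,\d u}\,c_3(r)\,\d r.
\] (3) Use $\tau$-periodicity of $c_1+c_2$ together with $\int_0^\tau(K_1+K_2)\,\d u=-\lambda<0$: for $\vv_0$ small enough the $\tau$-average of $c_1+c_2$ stays strictly negative, say $\le -\lambda'/\tau$ with $\lambda'>0$, so $\int_s^t(c_1+c_2)\,\d u\le -\lambda'\lfloor (t-s)/\tau\rfloor + \mathrm{const}\le -\lambda(t-s)+\mathrm{const}$ after relabelling constants; this yields the $\e^{-\lambda(t-s)}$ decay on the initial-data term. (4) The same exponential bound makes the $c_3$-integral uniformly bounded in $(s,t)\in\Delta$ (geometric-series argument over periods), producing the additive constant $C$. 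Collecting these gives \eqref{WW}.

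The main obstacle I anticipate is the bookkeeping in step (3): converting the sign of the $\tau$-\emph{average} of $K_1+K_2$ into a genuine exponential bound with a \emph{uniform} (in $s$) additive constant, since $c_1+c_2$ need not be negative pointwise and the partial integrals $\int_s^t$ oscillate within each period. The standard device is to split $[s,t]$ into full periods plus a remainder of length $<\tau$, bound the remainder contribution by $\sup$ over one period of $|\int_{\cdot}^{\cdot}(c_1+c_2)|$ (finite by continuity/periodicity), and absorb it into the constant; one must also verify that $\vv_0$ can be chosen small enough that the perturbation of the average caused by the $\vv_0$-dependent It\^o and $c_2$-terms does not destroy negativity --- this is where a careful but elementary continuity-in-$\vv_0$ argument is needed, and it is presumably the source of the clause "the precise value of $\vv_0\in(0,1)$ can be traced by a close inspection of the proof." A secondary technical point is justifying the It\^o expansion and the $\varepsilon\downarrow0$ limit rigorously (dominated convergence, plus a localization/stopping-time argument to control the local-martingale part), but this is routine given the a priori finite-horizon moment bound already quoted from \cite[Theorem 3.3]{huang2021distribution}.
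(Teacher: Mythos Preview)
Your proposal is correct and follows essentially the same route as the paper: It\^o's formula on $|X|^{2+\vv_0}$, centre \eqref{E20}--\eqref{E21} at the origin via Young's inequality, absorb the $\mathbb W_2$-term back into $\E|X|^{2+\vv_0}$, then exploit $\tau$-periodicity and the sign of $\int_0^\tau(K_1+K_2)$ (the paper invokes \cite[Lemma~3.1]{bao2022random} for exactly the period-splitting bookkeeping you anticipate in step~(3)). The only cosmetic difference is that the paper applies Young's inequality to $|x|^{\vv_0}\mathbb W_2(\mu,\delta_{\bf0})^2$ \emph{before} taking expectation and then uses $\mathbb W_2^{2+\vv_0}\le\mathbb W_{2+\vv_0}^{2+\vv_0}=\E|X|^{2+\vv_0}$, whereas you take expectation first and apply Jensen; both close the estimate in the same way.
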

\begin{proof}
With the aid of \eqref{E23}, there exist constants $\vv^*\in[0,1)$ and $\vv^{**}\in(0,1)$ such that 
\begin{equation}\label{W1}
-\lambda^*:=\int_0^\tau K^*(u) \,\d u\in(-\8,0) \quad \mbox{ with }\quad  K^{*}(u):=(1+\vv^*/2)\big(\hat{K}^{*}(u)+\tilde{K}^{^*}(u)\big),
\end{equation}
where 
\begin{equation*}
\begin{split}
\hat{K}^{*}(t):&=K_1(t)+\vv^{**}(1+K_3(t))+\frac{\big(1+K_2(t)+\vv^{**}K_3(t)+4K_3(t) +2(1+2K_3(t))\vv^*\big)\vv^*}{2+\vv^*},\\
\tilde{K}^{*}(t):&=\frac{2(K_2(t)+(\vv^{**}+2\vv^*) K_3(t))}{2+\vv^*}.
\end{split}
\end{equation*}
Below,  we shall stipulate $\vv^*\in(0,1)$
such that \eqref{W1} holds true ,  set $(s,t)\in\Delta$,  and fix  $\xi\in L^{2+\vv^*}(\OO\to\R^d,\F_s,\P)$. We omit the case $\vv^*=0$ as the proof is much simpler.
In accordance with \eqref{E20} and \eqref{E21}, we find that for all $t\in\R$, $x\in\R^d$ and $\mu\in \mathscr P_2(\R^d)$,
\begin{equation*}\label{W2}
\begin{split}
2\<x,b_t(x,\mu)\>+\|\sigma_t(x,\mu)\|_{\rm HS}^2&=2\<x,b_t(x,\mu)-b_t({\bf0},\delta_{\bf0})\>+\|\sigma_t(x,\mu)-\sigma_t({\bf0},\delta_{\bf0})\|_{\rm HS}^2\\
&\quad+2\<x,b_t({\bf0},\delta_{\bf0})\>+2\<\sigma_t(x,\mu)-\sigma_t({\bf0},\delta_{\bf0}),\sigma_t({\bf0},\delta_{\bf0})\>_{\rm HS}+\|\sigma_t({\bf0},\delta_{\bf0})\|_{\rm HS}^2\\
&\le \big(K_1(t)+\vv^{**}(1+K_3(t))\big)|x|^2+\big(K_2(t)+\vv^{**} K_3(t)\big)\mathbb W_2(\mu,\delta_{\bf0})^2\\
&\quad+ |b_t({\bf0},\delta_{\bf0})|^2/\vv^{**}+(1+1/\vv^{**})\|\si_t({\bf0},\delta_{\bf0})\|_{\rm HS}^2,
\end{split}
\end{equation*}
where, for the third and fourth terms on the RHS of the first equality, we make use the fact that $|ab|\le\vv^{**}a^2+b^2/\vv^{**}$.
Then, we apply Young's inequality, i.e., $ab\le a^p/p+b^q/q$ with $a,b>0$ and $p=\frac{2+\vv^{*}}{\vv^{*}}$ and $q=\frac{2+\vv^*}{2}$, to the following estimate:  for all  $t\in\R$, $x\in\R^d$ and $\mu\in \mathscr P_2(\R^d)$,
\begin{equation} \label{T}
\begin{split}
|x|^{\vv^*}\big(2\<x,b_t(x,\mu)\>+\|\sigma_t(x,\mu)\|_{\rm HS}^2\big)&\le\big(  K_1(t)+\vv^{**}(1+K_3(t))  \big)|x|^{2+\vv^*}\\
&\quad+|x|^{\vv^*}\big(K_2(t)+\vv^{**} K_3(t)\big)\mathbb W_2(\mu,\delta_{\bf0})^2\\
&\quad+\Big( |b_t({\bf0},\delta_{\bf0})|^2/\vv^{**}+(1+1/\vv^{**})\|\si_t({\bf0},\delta_{\bf0})\|_{\rm HS}^2\Big)|x|^{\vv^*} \\
&\le\bigg(  K_1(t)+\vv^{**}(1+K_3(t))  +\frac{(1+K_2(t)+\vv^{**} K_3(t))\vv^*}{2+\vv^*}\bigg)|x|^{2+\vv^*}\\
&\quad+\frac{2(K_2(t)+\vv^{**} K_3(t)) }{2+\vv^*}\mathbb W_2(\mu,\delta_{\bf0})^{2+\vv^*}
\\
&\quad+\frac{2}{2+\vv^*} \Big(|b_t({\bf0},\delta_{\bf0})|^2/\vv^{**}+(1+1/\vv^{**})\|\si_t({\bf0},\delta_{\bf0})\|_{\rm HS}^2\Big)^{1+\frac{\vv^*}{2}}.
\end{split}
\end{equation}
 Moreover, because of \eqref{E21}, using the same trick from Young's inequality above, we infer that for all $t\in\R$, $x\in\R^d$ and $\mu\in \mathscr P_2(\R^d)$,
\begin{equation} \label{T*alter}
\begin{split}
|x|^{\vv^*}\|\si_t(x,\mu)\|_{\rm HS}^2&\le 2 |x|^{\vv^*}\big(\|\si_t(x,\mu)-\sigma_t({\bf0},\delta_{{\bf0}})\|_{\rm HS}^2+\| \sigma_t({\bf0},\delta_{{\bf0}})\|_{\rm HS}^2\big)\\
&\le 2 |x|^{\vv^*}\big(K_3(t)|x|^2+K_3(t)\mathbb W_2(\mu,\delta_{\bf 0})^2+\| \sigma_t({\bf0},\delta_{{\bf0}})\|_{\rm HS}^2\big)\\
&\le  \frac{4K_3(t)+2(1+2K_3(t))\vv^*}{2+\vv^*}|x|^{2+\vv^*}\\
&\quad+\frac{4K_3(t) }{2+\vv^*}\mathbb W_2(\mu,\delta_{\bf 0})^{2+\vv^*}+\frac{4}{2+\vv^*}\| \sigma_t({\bf0},\delta_{{\bf0}})\|_{\rm HS}^{2+\vv^*}.
\end{split}
\end{equation}
Next, applying It\^o's formula and taking expectations yields 
\begin{equation*}
\begin{split}
\e^{-\int_s^tK^*(u)\,\d u}\E|X_{s,t}^\xi|^{2+\vv^*}&\le \E|\xi|^{2+\vv^*}+\int_s^t\e^{-\int_s^uK^*(v)\,\d v}\big(-K^*(u)\E|X_{s,u}^\xi|^{2+\vv^*}\\
&\quad+\mathbb{E}\big[(1+\vv^*/2)|X_{s,u}^\xi|^{\vv^*}\big(2\<X_{s,u}^\xi,b_u(X_{s,u}^\xi,\mathscr L_{X_{s,u}^\xi})\>\\
&\quad+\|\si_u(X_{s,u}^\xi,\mathscr L_{X_{s,u}^\xi})\|_{\rm HS}^2\big)\big]\\
&\quad+(1+\vv^*/2)\vv^*\mathbb{E}\big[|X_{s,u}^\xi|^{\vv^*}\|\si_u(X_{s,u}^\xi,\mathscr L_{X_{s,u}^\xi})\|_{\rm HS}^2\big]\big)\,\d u.
\end{split}
\end{equation*}
Next, substituting \eqref{T} and \eqref{T*alter} into the inequality above yields 
\begin{equation*}
\begin{split}
&\e^{-\int_s^tK^*(u)\,\d u}\E|X_{s,t}^\xi|^{2+\vv^*}
\\
&\le
\E|\xi|^{2+\vv^*}+(1+\vv^*/2)\int_s^t\e^{-\int_s^u K^\vv(v)\,\d v} \tilde{K}^*(u) \big(-\E|X_{s,u}^\xi|^{2+\vv^*}+\mathbb W_2\big(\mathscr L_{X_{s,u}^\xi},\delta_{{\bf0}}\big)^{2+\vv^*}\big)\,\d u
\\
&\quad+C_0(\vv^*,\vv^{**})\int_s^t\e^{-\int_s^u K^\vv(v)\,\d v}\big(|b_u({\bf0},\delta_{\bf0})|^2+\|\si_u({\bf0},\delta_{\bf0})\|_{\rm HS}^2\big)^{1+\frac{\vv^*}{2}}\d u ,
\end{split}
\end{equation*}
for some constant $C_0(\vv^*,\vv^{**})>0.$
Subsequently, combining with the fact that for any $\mu\in\mathscr P_{2+\vv^*}(\R^d)$,
$$\mathbb W_2(\mu,\delta_{{\bf0}})^{2+\vv^*}\le \mathbb W_{2+\vv^*}(\mu,\delta_{{\bf0}})^{2+\vv^*}=\mu(|\cdot|^{2+\vv^*}),$$
we obtain that 
\begin{equation*}
 \E|X_{s,t}^\xi|^{2+\vv^*}\le\e^{\int_s^tK^\vv(u)\,\d u}\E|\xi|^{2+\vv^*} +C_0(\vv^*,\vv^{**})\int_s^t\e^{ \int_u^tK^*(v)\,\d v}\big(|b_u({\bf0},\delta_{\bf0})|^2+\|\si_u({\bf0},\delta_{\bf0})\|_{\rm HS}^2\big)\d u.
\end{equation*}
Finally, by making use  of Lemma  \cite[Lemma 3.1]{bao2022random} and taking the continuity and the periodicity of $K_1,K_2,K_3$ into consideration, there is a constant $C_1(\vv^*,\vv^{**})>0$ (independent of $\xi$ and time parameters $t,s$) such that 
\begin{equation*}
\begin{split}
 \E|X_{s,t}^\xi|^{2+\vv^*}&\le C_1(\vv^*,\vv^{**})\e^{-\lambda^*\lfloor (t-s)/\tau\rfloor}\E|\xi|^{2+\vv^*} +C_1(\vv^*,\vv^{**})\int_s^t\e^{-\lambda^*\lfloor (t-u)/\tau\rfloor} \d u. 
\end{split}
\end{equation*}
Whence, the assertion \eqref{WW} immediately follows. 
\end{proof}

The following lemma reveals that the solution processes are continuous in the mean-square sense w.r.t. the initial values uniformly w.r.t. the time parameters.

\begin{lemma}[Moments of differences]\label{lem:stable1}
Under  assumptions of Theorem \ref{th1}, there exists a constant $C>0$  such that for all $(s,t)\in\Delta$  and $\xi,\eta\in L^2(\OO\to\R^d,\F_s,\P)$,
\begin{equation}\label{Wu}
   \mathbb{E} |X_{s,t}^{\xi}-Y_{s,t}^{\eta}|^2 \le C\,\e^{-\frac{\lambda}{\tau}  (t-s)  }\E|\xi-\eta|^2 
   \quad\textrm{and}\quad \mathbb{W}_2\big( \mathscr L_{X_{s,t}^\xi},\mathscr L_{X_{s,t}^\eta}\big)^2
   \leq 
   C\,\e^{-\frac{\lambda}{\tau}  (t-s)  }
   \mathbb{W}_2\big( \mathscr L_{\xi},\mathscr L_{\eta}\big)^2,
\end{equation}
 where $\lambda>0$ was defined in \eqref{E23}. 
\end{lemma}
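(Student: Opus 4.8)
The plan is to couple the two solutions $X^\xi_{s,t}$ and $X^\eta_{s,t}$ \emph{synchronously}, i.e., driving both with the \emph{same} Brownian motion $W$, apply It\^o's formula to $|X^\xi_{s,t}-X^\eta_{s,t}|^2$, and then exploit the dissipativity bundled into \eqref{E20}. Writing $Z_t := X^\xi_{s,t}-X^\eta_{s,t}$, the It\^o differential of $|Z_t|^2$ produces a martingale term (which vanishes in expectation) plus a drift term of exactly the form $2\<Z_t, b_t(X^\xi_{s,t},\mathscr L_{X^\xi_{s,t}})-b_t(X^\eta_{s,t},\mathscr L_{X^\eta_{s,t}})\>+\|\sigma_t(X^\xi_{s,t},\mathscr L_{X^\xi_{s,t}})-\sigma_t(X^\eta_{s,t},\mathscr L_{X^\eta_{s,t}})\|_{\rm HS}^2$, which by \eqref{E20} is bounded above by $K_1(t)|Z_t|^2+K_2(t)\mathbb W_2(\mathscr L_{X^\xi_{s,t}},\mathscr L_{X^\eta_{s,t}})^2$.

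The key observation is that, since synchronous coupling yields a valid coupling of the two laws, $\mathbb W_2(\mathscr L_{X^\xi_{s,t}},\mathscr L_{X^\eta_{s,t}})^2 \le \E|Z_t|^2$. Hence, after taking expectations, the function $h(t):=\E|Z_t|^2$ satisfies the differential inequality $h'(t)\le (K_1(t)+K_2(t))\,h(t)$ for a.e.\ $t\ge s$ (using that $K_2\ge 0$). Gr\"onwall's lemma then gives $h(t)\le h(s)\exp\!\big(\int_s^t(K_1(u)+K_2(u))\,\d u\big)$. Now I invoke the periodicity of $K_1,K_2$ together with \eqref{E23}: writing $t-s = k\tau + r$ with $k=\lfloor (t-s)/\tau\rfloor$ and $r\in[0,\tau)$, the integral $\int_s^t(K_1+K_2)\,\d u$ equals $-k\lambda$ plus a remainder $\int$ over an interval of length $<\tau$, which is bounded by a constant $c_0:=\int_0^\tau(|K_1(u)|+K_2(u))\,\d u<\infty$ by continuity. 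Thus $\int_s^t(K_1+K_2)\,\d u \le -k\lambda + c_0 \le -\tfrac{\lambda}{\tau}(t-s)+\lambda+c_0$, and absorbing $\e^{\lambda+c_0}$ into the constant $C$ yields the first inequality in \eqref{Wu}. (This is essentially an application of \cite[Lemma 3.1]{bao2022random}, as used in the proof of Lemma \ref{lem3}.)

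For the second inequality in \eqref{Wu}, I would not couple $\xi$ and $\eta$ synchronously at the initial time but rather start from an \emph{optimal} coupling: choose $(\xi,\eta)$ jointly so that $\E|\xi-\eta|^2 = \mathbb W_2(\mathscr L_\xi,\mathscr L_\eta)^2$ (such a coupling exists since $\mathscr P_2(\R^d)$ is a Polish space under $\mathbb W_2$), then run the synchronous coupling of the SDEs from that initial pair. Applying the first inequality to this particular pair and using $\mathbb W_2(\mathscr L_{X^\xi_{s,t}},\mathscr L_{X^\eta_{s,t}})^2 \le \E|X^\xi_{s,t}-X^\eta_{s,t}|^2$ gives $\mathbb W_2(\mathscr L_{X^\xi_{s,t}},\mathscr L_{X^\eta_{s,t}})^2 \le C\,\e^{-\frac{\lambda}{\tau}(t-s)}\E|\xi-\eta|^2 = C\,\e^{-\frac{\lambda}{\tau}(t-s)}\mathbb W_2(\mathscr L_\xi,\mathscr L_\eta)^2$, as claimed; note the law of $X^\xi_{s,t}$ depends only on $\mathscr L_\xi$, so the right-hand side is well-defined.

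The main obstacle — really the only subtle point — is the circularity lurking in the drift estimate: the bound \eqref{E20} leaves a $\mathbb W_2(\mathscr L_{X^\xi_{s,t}},\mathscr L_{X^\eta_{s,t}})^2$ term that a priori involves a different (optimal) coupling than the synchronous one we are running. The resolution is precisely the monotone inequality $\mathbb W_2^2 \le \E|\cdot-\cdot|^2$ under any coupling, which closes the Gr\"onwall loop without needing $K_2(t)\le 0$; this is why only $\int_0^\tau(K_1+K_2)<0$, rather than a pointwise sign condition, is required. A secondary technical care is justifying the use of It\^o's formula on the unbounded interval and the finiteness of $h(t)$, which follows from the a priori moment bound of Lemma \ref{lem3} (or the finite-horizon estimate following Assumption (\textbf{A})); and the a.e.-differentiability of $h$, which is standard since $t\mapsto h(t)$ is locally absolutely continuous by the integrated It\^o identity.
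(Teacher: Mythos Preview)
Your proposal is correct and essentially identical to the paper's own proof: the paper applies It\^o's formula with an integrating factor $\e^{-\int_s^t(K_1+K_2)\,\d u}$ to $|X^\xi_{s,t}-X^\eta_{s,t}|^2$, uses \eqref{E20} together with the coupling inequality $\mathbb W_2(\mathscr L_{X^\xi_{s,u}},\mathscr L_{X^\eta_{s,u}})^2\le \E|X^\xi_{s,u}-X^\eta_{s,u}|^2$ to close the estimate, invokes \cite[Lemma~3.1]{bao2022random} for the periodic Gr\"onwall step, and deduces the $\mathbb W_2$-contraction by choosing an optimal initial coupling. Your version differs only cosmetically---Gr\"onwall in place of the integrating factor, and an explicit write-out of the periodicity argument---but the ideas and structure coincide.
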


\begin{proof}
In the sequel, we still write $(s,t)\in\Delta$ and $\xi\in L^2(\OO\to\R^d,\F_s,\P)$ and moreover set 
$\psi_{s,t}:=X_{s,t}^{\xi}-Y_{s,t}^{\eta}$ for the sake  of  notation brevity.
For $\bar K(t):=K_1(t)+K_2(t)$, $t\in\R,$ applying It\^o's formula, we deduce from \eqref{E20} that 
\begin{equation*}
\begin{split}    
\e^{-\int_s^t\bar K(u)\,\d u}\E|\psi_{s,t}|^2&=\E|\xi-\eta|^2+\int_s^t\e^{-\int_s^u\bar K(v)\,\d v}\big(-\bar K(u)\E|\psi_{s,u}|^2\\
&\qquad\qquad\qquad\qquad\qquad\qquad\quad+2\E\<\psi_{s,u}, b_u(X_{s,u}^\xi,\mathscr L_{X_{s,u}^\xi})-b_u(X_{s,u}^\eta,\mathscr L_{X_{s,u}^\eta})\>\\
&\qquad\qquad\qquad\qquad\qquad\qquad\quad+\E\|\sigma_u(X_{s,u}^\xi,\mathscr L_{X_{s,u}}^\xi)-\si_u(X_{s,u}^\eta,\mathscr L_{X_{s,u}}^\eta)\|_{\rm HS}^2\big)\,\d u\\
&\le \E|\xi-\eta|^2+\int_s^t\e^{-\int_s^u\bar K(v)\,\d v}K_2(u)\big(-  \E|\psi_{s,u}|^2+\mathbb W_2\big(\mathscr L_{X_{s,u}^\xi},\mathscr L_{X_{s,u}^\eta}\big)^2\big)\,\d u\\
&\le \E|\xi-\eta|^2,
\end{split}
\end{equation*}
where in the last display we used the basic fact that 
\begin{equation}\label{ETT}
   \mathbb W_2\big(\mathscr L_{X_{s,u}^\xi},\mathscr L_{X_{s,u}^\eta}\big)^2\le  \E|X_{s,u}^{\xi}-Y_{s,u}^{\eta}|^2=\E|\psi_{s,u}|^2,\qquad u\ge s.
\end{equation}
Therefore, we arrive at 
\begin{equation*}
\begin{split} 
\E|\psi_{s,t}|^2\le  \e^{\int_s^t\bar K(u)
\,\d u}\E|\xi-\eta|^2.
 \end{split}
\end{equation*}
 Then, invoking \cite[Lemma 3.1]{bao2022random} and recalling the definition of $\lambda$ introduced in \eqref{E23}  yields 
 \begin{equation*} 
   \mathbb{E} |\psi_{s,t}|^2 \le \exp\bigg(-\lambda\lfloor (t-s)/\tau\rfloor +\int_{s-\lfloor s/\tau\rfloor \tau}^{t-(\lfloor s/\tau\rfloor+\lfloor (t-s)/\tau\rfloor) \tau}\bar K(u) \,\d u\bigg)\E|\xi-\eta|^2.
\end{equation*}
 This, together with the local  integrability of $\bar K$ 
 due to the corresponding continuity of $K_1$ and $K_2$, leads to the desired assertion, i.e., the first inequality in \eqref{Wu}.
 The second inequality in \eqref{Wu} is followed by choosing $\xi,\eta\in L^2(\OO\to\R^d,\F_s,\P)$ satisfying $\E|\xi-\eta|^2=\mathbb{W}_2\big( \mathscr L_{\xi},\mathscr L_{\eta}\big)^2$ on the RHS of the first inequality in \eqref{Wu} while using \eqref{ETT} for it on the LHS.
\end{proof}

The subsequent lemma is concerned with the stochastic semi-flow property of the solution process, which is the foundation to treat   existence of random periodic solutions. 

\begin{lemma}\label{lem1}
Under Assumption $({\bf A})$, the solution process $(X_{s,t}^\xi)_{t\ge s}$ admits the semi-flow property:
\begin{equation}\label{E5}
X_{s,t}^\xi=X_{r,t}^{X_{s,r}^\xi},\quad s\le r\le t,\quad \xi\in H:=L^2(\OO\to\R^d,\F_s,\P).
\end{equation}
\end{lemma}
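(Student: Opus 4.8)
The plan is to establish the semi-flow identity \eqref{E5} by a uniqueness argument: both sides of \eqref{E5}, viewed as processes in $t$ on $[r,\infty)$, solve the same McKean--Vlasov SDE with the same initial datum at time $r$, hence they coincide by the strong well-posedness guaranteed under Assumption $({\bf A})$ (via \cite[Theorem 3.3]{huang2021distribution}). The subtlety, as emphasised in the introduction, is that for McKean--Vlasov SDEs the drift and diffusion depend on the \emph{law} of the solution, so one cannot blindly invoke pathwise uniqueness as for classical SDEs; one must first check that the \emph{marginal laws} agree along the way.

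Concretely, I would proceed in the following steps. First, fix $s\le r\le t$ and $\xi\in L^2(\OO\to\R^d,\F_s,\P)$, and note that $X_{s,r}^\xi$ is $\F_r$-measurable and lies in $L^2(\OO\to\R^d,\F_r,\P)$ (by the a priori moment bound quoted after Assumption $({\bf A})$), so the right-hand side $X_{r,\cdot}^{X_{s,r}^\xi}$ is well-defined as the unique strong solution on $[r,\infty)$ started from $X_{s,r}^\xi$ at time $r$. Second, I would verify that the law flows match: for $u\in[r,t]$ one has $\mathscr L_{X_{s,u}^\xi}=\mathscr L_{X_{r,u}^{X_{s,r}^\xi}}$. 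The key point here is that $\mathscr L_{X_{s,r}^\xi}$ is precisely the initial law at time $r$ of the solution on $[r,\infty)$, and by strong well-posedness the whole family of marginal laws $(\mathscr L_{X_{r,u}})_{u\ge r}$ is uniquely determined by the initial law $\mathscr L_{X_{s,r}^\xi}$; since $(X_{s,u}^\xi)_{u\ge r}$ restricted to $[r,\infty)$ also has that initial law and solves \eqref{E1} (it satisfies the integral equation on $[r,\infty)$ by splitting the integral at $r$), its marginal laws must coincide with those of $X_{r,u}^{X_{s,r}^\xi}$. Third, once the law flows are identified, the SDE satisfied by $(X_{s,u}^\xi)_{u\in[r,t]}$ becomes a \emph{classical} (non-distribution-dependent) SDE with coefficients $b_u(\cdot,\mathscr L_{X_{r,u}^{X_{s,r}^\xi}})$ and $\sigma_u(\cdot,\mathscr L_{X_{r,u}^{X_{s,r}^\xi}})$, which under the one-sided Lipschitz/monotonicity bounds \eqref{E2}--\eqref{E3} enjoys pathwise uniqueness; hence $X_{s,u}^\xi=X_{r,u}^{X_{s,r}^\xi}$ a.s. for each $u\in[r,t]$, and in particular at $u=t$, giving \eqref{E5}.

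The main obstacle I anticipate is the second step — pinning down the equality of the marginal law flows rigorously — because this is exactly where the McKean--Vlasov structure bites: one needs that the initial \emph{distribution} alone (not the random variable) determines the future marginals, which is the content of strong well-posedness, and one needs to check that $(X_{s,u}^\xi)_{u\ge r}$ is genuinely an $\F_u$-adapted strong solution of \eqref{E1} on the shifted time interval $[r,\infty)$ with respect to the Brownian increments after time $r$. This is routine once the additivity of the stochastic integral over $[s,r]\cup[r,t]$ is invoked, but it is the conceptual heart of the argument. A small technical point worth stating explicitly is that uniqueness in \eqref{E5} is an almost-sure identity of random variables for each fixed $t$ (and, by path continuity, simultaneously for all $t$ on a common null set), which suffices for the later construction of the random periodic solution.
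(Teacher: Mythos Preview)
Your argument is correct and, in fact, more direct than the paper's. You invoke the strong well-posedness already granted by \cite[Theorem 3.3]{huang2021distribution}: since $(X_{s,u}^\xi)_{u\ge r}$, after splitting the integral at $r$, is a strong solution of \eqref{E1} on $[r,\infty)$ with $\F_r$-measurable $L^2$ initial datum $X_{s,r}^\xi$, and so is $(X_{r,u}^{X_{s,r}^\xi})_{u\ge r}$ by definition, uniqueness forces them to coincide. Your decomposition into ``match the law flows, then apply pathwise uniqueness for the frozen SDE'' is a valid way to unpack that uniqueness, though one could also appeal to strong uniqueness of the McKean--Vlasov equation in one stroke.

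The paper takes a different route: it re-establishes the fixed-point structure underlying well-posedness. It fixes a path $\mu\in C([s,T];\mathscr P_2(\R^d))$ with $\mu_s=\mathscr L_\xi$, solves the \emph{frozen} SDE $\d Y_{s,t}^{\mu,\xi}=b_t(Y_{s,t}^{\mu,\xi},\mu_t)\,\d t+\si_t(Y_{s,t}^{\mu,\xi},\mu_t)\,\d W_t$ (which, being classical, enjoys the semi-flow property), defines $\Phi(\mu)_{s,t}:=\mathscr L_{Y_{s,t}^{\mu,\xi}}$, and shows $\Phi$ is a contraction on a suitable weighted Wasserstein space. At the unique fixed point $\mu$, the frozen SDE \emph{is} the McKean--Vlasov SDE, so $X_{s,t}^\xi=Y_{s,t}^{\mu,\xi}$ and the semi-flow property transfers from $Y$ to $X$. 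This is more laborious --- it essentially re-proves well-posedness --- but it has the advantage of making the fixed-point machinery explicit, which the paper then reuses in the next lemma (periodicity), where a modified fixed-point argument on a space of $\tau$-shift-invariant measure paths is needed. Your approach buys brevity for this lemma; the paper's buys a reusable template for the subsequent one.
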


\begin{proof} 
For any $T\ge s $ and $\nu\in \mathscr P_2(\R^d)$, let 
\begin{equation*}
\mathcal C_{s,T}^\nu=\big\{\mu\in C\big([s,T];\mathscr P_2(\R^d)\big):\mu_{s}=\nu\big\}.
\end{equation*}
  Define a metric   on $\mathcal C_{s,T}^\nu$ as below: for any $\ll>0,$
\begin{equation*}
\mathbb W_{2,\ll}\big(\mu^1,\mu^2\big):=\sup_{s\le t\le T}\big(\e^{-\ll (t-s)}\mathbb W_2\big(\mu^1_{t},\mu^2_{t}\big)\big),\quad \mu^1,\mu^2\in\mathcal C_{s,T}^\nu.
\end{equation*}
Under the metric $\mathbb W_{2,\ll}$, $\mathcal C_{s,T}^\nu$ is a Polish space. For a fixed $\mu\in\mathcal C_{s,T}^\nu$, we consider the following distribution-frozen SDE associated with \eqref{E1}
\begin{equation}\label{E4}
\d Y_{s,t}^{\mu,\xi}=b_t(Y_{s,t}^{\mu,\xi},\mu_{t})\,\d t+\si_t(Y_{s,t}^{\mu,\xi},\mu_{t})\,\d W_t,\qquad  (s,t)\in\Delta,\qquad Y_{s,s}^{\mu,\xi}=X_{s,s}^{\xi}=\xi\sim\nu,
\end{equation}
where $X_{s,s}^\xi$ is the initial value of the McKean-Vlasov SDE \eqref{E1}. 
Under Assumption ({\bf A}), the classical time-inhomogeneous  SDE \eqref{E4} is strongly well-posed (see e.g. \cite[Theorem 3.1.1, p.44]{prevot2007concise}) so the associated solution process $(Y_{s,t}^{\mu,\xi})_{t\ge s}$
possesses the semi-flow property (in $\bR^d$ and hence in $L^2$), i.e., 
\begin{equation}\label{E7}
Y_{s,t}^{\mu,\xi}=Y_{r,t}^{\mu,Y_{s,r}^{\mu,\xi}},\quad s\le r\le t, \quad \xi\in H. 
\end{equation}
For any $\mu\in\mathcal C_{s,T}^\nu$, define the mapping $\Phi(\mu)$ as below
\begin{equation}\label{E6}
\big(\Phi(\mu)\big)_{s,t,\xi}=\mathscr L_{Y_{s,t}^{\mu,\xi}},\quad s\le t\le T,\quad\xi\in H.
\end{equation}
Provided that the mapping $\Phi \in \mathcal C_{s,T}^\nu$ defined above admits a unique fixed point $\mu\in \mathcal C_{s,T}^\nu$, i.e., 
\begin{equation*}
\mu=\Phi(\mu)\qquad \mbox{ so } \qquad \big(\Phi(\mu)\big)_{s,t,\xi}=\mu_{t},\quad s\le t\le T.
\end{equation*}
Thus, in addition to \eqref{E6}, we can replace the measure variable $\mu_{t}$ in \eqref{E4} by $\mathscr L_{Y_{s,t}^{\mu,\xi}},$ so the SDE \eqref{E4} is exactly the same as the McKean-Vlasov SDE \eqref{E1}. Consequently, we find 
\begin{equation*}
X_{s,t}^{\xi}=Y_{s,t}^{\mu,\xi},\quad s\le t\le T,\quad \xi\in  H.
\end{equation*}
This, besides  \eqref{E7}, enables us to derive that 
\begin{equation*} 
X_{s,t}^{\xi}=X_{r,t}^{X_{s,r}^{\xi}},\quad s\le r\le t\le T, \quad \xi\in H.
\end{equation*}
Accordingly, the semi-flow property \eqref{E5} holds true.

Based on the analysis above, to achieve the semi-flow property \eqref{E5}, it remains to show that the mapping $\Phi$ defined in \eqref{E6} has a unique fixed point in $\mathcal C_{s,T}^\nu$. To this end, it suffices   to justify 
\begin{enumerate} 
\item[(i)] $\Phi$ maps $\mathcal C_{s,T}^\nu$ into itself, i.e., $\Phi:\mathcal C_{s,T}^\nu\rightarrow\mathcal C_{s,T}^\nu$;

\item[(ii)] $\Phi$  is contractive in $\mathcal C_{s,T}^\nu$ under the metric $\mathbb W_{2,\ll}$ for   some suitably chosen  $\ll>0$, i.e., there exists a constant $c\in(0,1)$ such that for some $\ll>0$ to be figured,
    \begin{equation*}
\mathbb W_{2,\ll}\big(\Phi(\mu^1),\Phi(\mu^2)\big)\le c\,\mathbb W_{2,\ll}\big(\mu^1,\mu^2\big).
    \end{equation*}
\end{enumerate}
In the first place, we verify the assertion (i). For any $t\in\R $,   $x\in\R^d$ and $\rho\in\mathscr P_2(\R^d)$, we infer from 
\eqref{E2} and \eqref{E3} that 
\begin{equation}\label{E8}
2\<x,b_t(x,\rho)\>\le \big(1+\bar{K}_1(t)\big)|x|^2+ \bar K_1(t) \mathbb W_2(\rho,\delta_{{\bf0}})^2+|b_t({\bf0},\delta_{{\bf0}})|^2,
\end{equation}
and
\begin{equation} \label{E8E}
\|\sigma_t(x,\rho)\|_{\rm HS}^2\le 2\bar{K}_2(t)\big(|x|^2+\mathbb W_2(\rho,\delta_{{\bf0}})^2\big)+2\|\sigma_t({\bf 0},\delta_{{\bf0}})\|_{\rm HS}^2.
\end{equation}

Next, 
applying It\^o's formula followed by the Burkholder-Davis-Gundy (BDG) inequality, we deduce that   for any   $\xi\in H$  and $s\le t\le T$,
\begin{equation*}
\begin{split}
\E\Big(\sup_{s\le r\le t}|Y_{s,r}^{\mu,\xi}|^2\Big)&\le \E|\xi|^2+\int_s^t\big\{2\<Y_{s,u}^{\mu,\xi},b_u(Y_{s,u}^{\mu,\xi},\mu_{u})\>+\|\si_u(Y_{s,u}^{\mu,\xi},\mu_{u})\|_{\rm HS}^2\big\}\,\d u\\
&\quad+2\E\bigg(\sup_{s\le r\le t}\int_s^r\<Y_{s,u}^{\mu,\xi},\si_u(Y_{s,u}^{\mu,\xi},\mu_{u})\d W_u\>\bigg)\\
&\le \E|\xi|^2+\int_s^t\big\{2\<Y_{s,u}^{\mu,\xi},b_u(Y_{s,u}^{\mu,\xi},\mu_{u})\>+65\|\si_u(Y_{s,u}^{\mu,\xi},\mu_{u})\|_{\rm HS}^2\big\}\,\d u+\frac{1}{2}\E\Big(\sup_{s\le r\le t}|Y_{s,r}^{\mu,\xi}|^2\Big).
\end{split}
\end{equation*}
Thereby,   \eqref{E8} and \eqref{E8E} yield 
\begin{equation*}
\begin{split}
\E\Big(\sup_{s\le r\le t}|Y_{s,r}^{\mu,\xi}|^2\Big)
&\le 2\E|\xi|^2+2\int_s^t\big\{2\<Y_{s,u}^{\mu,\xi},b_u(Y_{s,u}^{\mu,\xi},\mu_{u})\>+65\|\si_u(Y_{s,u}^{\mu,\xi},\mu_{u})\|_{\rm HS}^2\big\}\,\d u\\
&\le 2\E|\xi|^2+\int_s^t\ll_{u}\big(\E|Y_{s,u}^{\mu,\xi}|^2+\mathbb W_2(\mu_{u},\delta_{{\bf0}})^2+C_u\big)\,\d u,
\end{split}
\end{equation*}
where
$$\ll_u:=\bar{K}_1(u)+130\bar{K}_2(u)+1 \quad \mbox{ and } \quad  C_u:= |b_u({\bf0},\delta_{{\bf0}})|^2+130\|\si_u({\bf0},\delta_{{\bf0}})\|^2_{\rm HS},\qquad u\ge s.$$
Thus, applying Gronwall's inequality leads to
\begin{equation}\label{T*}
\begin{split}
\E\Big(\sup_{s\le r\le t}|Y_{s,r}^{\mu,\xi}|^2\Big)&\le \bigg(2\E|\xi|^2+\int_s^t\big\{\lambda_u\mathbb W_2(\mu_{s,u},\delta_{{\bf0}})^2+C_u\big\}\,\d u\bigg)\e^{\int_s^t\ll_u\,\d u}.
\end{split}
\end{equation}
This, combining $\mu \in\mathcal C_{s,T}^\nu$ with the fact that 
$t\mapsto \bar{K}_1(t)+\bar{K}_2(t)$ and $t\mapsto |b_t({\bf0},\delta_{{\bf0}})|^2+\|\si_t({\bf0},\delta_{{\bf0}})\|^2_{\rm HS}$ are locally integrable, yields $\mathscr L_{Y_{s,t}^{\mu,\xi}}\in \mathscr P_2(\R^d)$ for any $\mu \in\mathcal C_{s,T}^\nu$, $\xi\in H$  and $s\le t\le T$. In the following, to examine  $\Phi\in\mathcal C_{s,T}^\nu$,
we further need to claim the continuity, ie,
\begin{equation}\label{TT}
\lim_{\triangle t\downarrow 0}\mathbb W_2\big(\mathscr L_{Y_{s,t+\triangle t}^{\mu,\xi}},\mathscr L_{Y_{s,t}^{\mu,\xi}}\big)=0. 
\end{equation}
In the light of the semi-flow property of $\mathscr L_{Y_{s,t}^{\mu,\xi}}$, it is sufficient  to prove  \begin{equation} \label{WW*}
\lim_{\triangle s\downarrow0}\mathbb W_2\big(\mathscr L_{Y_{s,s+\triangle s}^{\mu,\xi}},\mathscr L_\xi\big)=0. 
\end{equation} 
To end this,  define the stopping time for each integer  $n\ge1$,
$$\tau_n:=\inf\big\{t\ge s: |Y_{s,t}^{\mu,\xi}|\ge n\big\}.$$
In the following analysis, we shall  stipulate $\triangle s\in(0,1)$. 
Apparently, we have 
\begin{equation*}
\begin{split}
\E|Y_{s,s+\triangle s}^{\mu,\xi}-\xi|^2&=\E\big(|Y_{s,s+\triangle s}^{\mu,\xi}-\xi|^2{\bf1}_{\{s+\triangle s<\tau_n\}}\big)+\E\big(|Y_{s,s+\triangle s}^{\mu,\xi}-\xi|^2{\bf1}_{\{{s+}\triangle s\ge\tau_n\}}\big)\\ &=:I_1(n,\triangle s)+I_2(n,\triangle s).
\end{split}
\end{equation*}
Since, for each $t\in\R,$  $b_t$ and $\si_t$
are  continuous and bounded on bounded sets of $\R^d\times \mathscr P_2(\R^d)$,  we have for each fixed $n\ge1$,
\begin{equation}\label{W*}
\lim_{\triangle s\to0}I_1(n,\triangle s)=0 \quad \bP\textrm{-a.s.}
\end{equation}
On the other hand, it is easy to see that
\begin{equation*}
\begin{split}
I_2(n,\triangle s)\le 2\big(\E\big(|Y_{s,s+\triangle s}^{\mu,\xi}|^2+|\xi|^2\big){\bf1}_{\{{s+}\triangle s\ge\tau_n\}}\big).
\end{split}
\end{equation*}
By the apparent fact that 
\begin{equation*}
\{{s+}\triangle s\ge \tau_n\}\subseteq\Big\{\sup_{s\le r\le s+1}|Y_{s,s+r}^{\mu,\xi}|\ge n\Big\},
\end{equation*}
we are ready to see that  
\begin{equation*}
\begin{split}
I_2(n,\triangle s)\le 4 \E\Big(\sup_{s\le r\le s+1}|Y_{s,r}^{\mu,\xi}|^2 {\bf1}_{\big\{\sup_{s\le r\le s+1}|Y_{s,r}^{\mu,\xi}|\ge n\big\}}\Big).
\end{split}
\end{equation*}
Consequently, taking \eqref{T*} into consideration and  making use of the uniform integrability of the sequence $\sup_{s\le r\le s+1}|Y_{s,s+r}^{\mu,\xi}|$, we arrive at 
$$\lim_{n\to\8}I_2(n,\triangle s)=0.$$
Therefore, combining this with \eqref{W*}, we reach \eqref{WW*}.

In the later context, we aim to demonstrate  the assertion (ii). Below, we shall assume that  $s\le t\le T$, $\mu^1,\mu^2\in \mathcal C_{s,T}^\nu$ and $\xi\in H$.
Again, by invoking It\^o's formula, we deduce from  \eqref{E2} and \eqref{E3} that 
\begin{equation*}
\E|Y_{s,t}^{\mu_1,\xi}-Y_{s,t}^{\mu_2,\xi}|^2\le  \int_s^t\big(\bar{K}_1(u)+\bar{K}_2(u)\big)\big(\E|Y_{s,u}^{\mu_1,\xi}-Y_{s,u}^{\mu_2,\xi}|^2+\mathbb W_2\big(\mu^1_{u},\mu_{u}^2\big)^2\big)\d u.
\end{equation*}
Subsequently,  Gronwall's inequality yields  
\begin{equation*}
\E|Y_{s,t}^{\mu_1,\xi}-Y_{s,t}^{\mu_2,\xi}|^2\le  \int_s^t\big(\bar{K}_1(u)+\bar{K}_2(u)\big)\mathbb W_2\big(\mu^1_u,\mu_u^2\big)^2\d u\exp\bigg(\int_s^t\big(\bar{K}_1(u)+\bar{K}_2(u)\big)\,\d u
\bigg).
\end{equation*}
This further implies that for any $\ll>0$,
\begin{equation}\label{E9}
\begin{split}
\e^{-2\ll (t-s)}\E|Y_{s,t}^{\mu_1,\xi}-Y_{s,t}^{\mu_2,\xi}|^2&\le   \int_s^t\big(\bar{K}_1(u)+\bar{K}_2(u)\big)\e^{-2\ll (t-u)}\e^{-2\ll (u-s)}\mathbb W_2(\mu^1_u,\mu_u^2)^2\d u\\
&\quad\times\exp\bigg(\int_s^T\big(\bar{K}_1(u)+\bar{K}_2(u)\big)\,\d u\bigg)\\
&\le \sup_{s\le t\le T}\int_s^t\big(\bar{K}_1(u)+\bar{K}_2(u)\big)\e^{-2\ll (t-u)} \d u\\
&\quad\times\exp\bigg(\int_s^T\big(\bar{K}_1(u)+\bar{K}_2(u)\big)\,\d u\bigg)\mathbb W_{2,\ll}\big(\mu^1,\mu^2\big)^2.
\end{split}
\end{equation}
Observe  that
\begin{equation*}
\begin{split}
\mathbb W_{2,\ll}(\Phi(\mu^1 ),\Phi(\mu^2 ))&=\sup_{s\le t\le T}\big(\e^{-\ll (t-s)}\mathbb W_2\big((\Phi(\mu^1 ))_{s,t,\xi},(\Phi(\mu^2 ))_{s,t,\xi}\big)\big)\\
&\le \sup_{s\le t\le T}\big(\e^{-2\ll (t-s)}\E|Y_{s,t}^{\mu_1,\xi}-Y_{s,t}^{\mu_2,\xi}|^2\big)^{1/2}.
\end{split}
\end{equation*}
Thus, we conclude from \eqref{E9} that the assertion (ii) is valid by using the fact that both $\bar{K}_1(\cdot)$ and $\bar{K}_2(\cdot)$ are locally integrable followed by approaching $\ll>0$ sufficiently large.
\end{proof}

\begin{lemma}[Periodicity]\label{lem2}
Let Assumption ({\bf A}) hold  and suppose further that, for all fixed $x\in\R^d$ and $\mu\in\mathscr P(\R^d)$, $t\mapsto b_t(x,\mu)$ and $t\mapsto \sigma_t(x,\mu)$ are $\tau$-periodic for some constant $\tau>0$. 
Then, for any $(s,t)\in\Delta$, $\omega\in\Omega$ and $\xi\in L^2(\OO\to\R^d,\F_{s},\P)$, 
\begin{equation}\label{W16}
X_{s+\tau,t+\tau}^\xi(\omega)=X_{s,t}^\xi(\theta_\tau\omega).
\end{equation}
\end{lemma}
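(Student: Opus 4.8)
The plan is to reduce the identity \eqref{W16} to the analogous statement for a \emph{classical} (distribution-frozen) SDE, for which invariance under the Wiener shift is routine, at the price of one genuinely McKean-Vlasov ingredient: the deterministic curve of marginal laws attached to \eqref{E1} must be shown to be unchanged when the initial time is shifted by $\tau$. First I would reparametrise time: fix $(s,t)\in\Delta$ and set $\tilde W_v:=W_{v+\tau}-W_{s+\tau}$, $v\ge s$, which is again a Brownian motion for the shifted filtration $(\F_{v+\tau})_{v\ge s}$ (a deterministic additive constant being immaterial for stochastic integration). Writing out the integral form of \eqref{E1} started at time $s+\tau$, substituting $u=v+\tau$ in both integrals, and invoking the assumed $\tau$-periodicity of $t\mapsto b_t(x,\mu)$ and $t\mapsto\sigma_t(x,\mu)$, one checks that $v\mapsto X^\xi_{s+\tau,v+\tau}$ solves, on $[s,\infty)$, \emph{the same} McKean-Vlasov SDE \eqref{E1}, now driven by $\tilde W$ and started from the same $\xi$.

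Next I would establish invariance of the law flow. Since $(\tilde W_v)_{v\ge s}$ and $(W_v)_{v\ge s}$ have the same law, $\xi$ is independent of each, and \eqref{E1} is weakly well-posed under Assumption ({\bf A}) (cf.\ \cite[Theorem 3.3]{huang2021distribution}), the processes $v\mapsto X^\xi_{s+\tau,v+\tau}$ and $v\mapsto X^\xi_{s,v}$ have the same law; in particular $\mathscr L_{X^\xi_{s+\tau,v+\tau}}=\mathscr L_{X^\xi_{s,v}}=:\mu_v$ for all $v\ge s$. Thus the deterministic measure arguments entering the coefficients of the two equations coincide, so the measure dependence may now be frozen.

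With $(\mu_v)_{v\ge s}$ a fixed curve, both processes solve the distribution-frozen, time-inhomogeneous SDE \eqref{E4} (with $\mu_\cdot$ in place of the frozen measure), which is strongly well-posed under ({\bf A}) --- this is precisely the reduction already performed in the proof of Lemma \ref{lem1}. Pathwise uniqueness then represents the solution as a single measurable functional $G$ of the initial condition and the driving path, so $X^\xi_{s,v}=G(\xi,W|_{[s,v]})$ while $X^\xi_{s+\tau,v+\tau}=G(\xi,\tilde W|_{[s,v]})$. Because $W_v(\theta_\tau\omega)=\omega(v+\tau)-\omega(\tau)$ differs from $\tilde W_v(\omega)=\omega(v+\tau)-\omega(s+\tau)$ only by the $v$-independent constant $\omega(s+\tau)-\omega(\tau)$, the two paths furnish identical stochastic integrals and hence the same output of $G$; taking $v=t$ gives \eqref{W16} for a.e.\ $\omega$. (For a genuinely $\F_s$-measurable $\xi$ rather than a deterministic one, one uses that $\xi$ enters both sides through the same functional; in any case only deterministic initial values are needed for the limit $s\downarrow-\infty$ in \eqref{P17}.)

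The main obstacle is the middle step: pathwise uniqueness cannot be used to compare the two McKean-Vlasov equations directly, since the coefficients of each depend on the a priori unknown law of its own solution. This circularity has to be broken first, which is exactly what weak uniqueness together with the time-homogeneity of Brownian motion accomplishes, after which the problem collapses to the familiar frozen-coefficient case.
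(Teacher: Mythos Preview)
Your argument is correct and reaches the same conclusion, but by a different route than the paper. The paper does not invoke weak uniqueness of the McKean--Vlasov equation as a black box; instead it re-runs the Banach fixed-point construction of Lemma~\ref{lem1} on a restricted space
\[
\tilde{\mathcal C}_{s,T+\tau}^\nu=\big\{\mu\in C([s,T+\tau]^2;\mathscr P_2(\R^d)):\mu_{s,s}=\nu,\ \mu_{r,\tilde r}=\mu_{r+\tau,\tilde r+\tau}\big\},
\]
first checks the frozen identity $Y_{s+\tau,t+\tau}^{\mu,\xi}(\omega)=Y_{s,t}^{\mu,\xi}(\theta_\tau\omega)$ for any $\mu$ in this space (this is your ``routine'' step), then verifies that the map $\Phi(\mu)=\mathscr L_{Y^{\mu,\xi}}$ sends $\tilde{\mathcal C}_{s,T+\tau}^\nu$ into itself using the measure-preserving property of $\theta_\tau$, and finally appeals to the same contraction estimate as in Lemma~\ref{lem1} to produce a unique fixed point, which is therefore the McKean--Vlasov law flow and is automatically $\tau$-periodic. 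Your approach short-circuits this: you observe that the time-shifted process solves the \emph{same} McKean--Vlasov equation (after reparametrisation and periodicity) driven by an equidistributed Brownian motion independent of $\xi$, and invoke weak uniqueness once to get $\mathscr L_{X^\xi_{s+\tau,v+\tau}}=\mathscr L_{X^\xi_{s,v}}$; only then do you freeze and apply pathwise uniqueness. This is cleaner and avoids re-establishing contractivity on the auxiliary space, at the cost of relying on weak uniqueness as an imported fact rather than deriving the periodicity of the law flow from within the fixed-point picture. The paper's route has the minor advantage of being entirely self-contained within its own fixed-point framework. Your parenthetical remark about random versus deterministic $\xi$ flags the same ambiguity about $\xi(\omega)$ versus $\xi(\theta_\tau\omega)$ that is present (and equally glossed over) in the paper's equations \eqref{E10}--\eqref{E11}.
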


\begin{proof}
In the following analysis, we prescribe  $s\le t\le T$, $\omega\in\OO$ and $X_{s,s}^\xi=\xi\in L^2(\OO\to\R^d,\F_s,\P)$ with $\nu:=\mathscr L_\xi$. For our purpose, we set 
\begin{equation*}
\tilde{\mathcal C}_{s,T+\tau}^\nu:=\big\{\mu\in C([s,T+\tau]\times[s,T+\tau];\mathscr P_2(\R^d)):\mu_{s,s}=\nu,~\mu_{r,\tilde{r}}=\mu_{r+\tau,\tilde{r}+\tau},~ s\le r\le  \tilde{r}\le T\big\}.
\end{equation*}
Below, we still work on the SDE \eqref{E4} but with  $\mu\in\tilde{\mathcal C}_{s,T+\tau}^\nu$ and write   $(Y_{s,t}^{\mu,\xi})_{t\ge s}$ as  the solution to \eqref{E4} with $\mu\in\tilde{\mathcal C}_{s,T+\tau}^\nu$. In the light of \eqref{E4}, we deduce that 
\begin{equation}\label{E10}
\begin{split}
Y_{s+\tau,t+\tau}^{\mu,\xi}(\omega)&=\xi+\int_{s+\tau}^{t+\tau}b_u\big(Y_{s+\tau,u}^{\mu,\xi}(\omega),\mu_{s+\tau,u}\big)\,\d u+\int_{s+\tau}^{t+\tau}\si_u\big(Y_{s+\tau,u}^{\mu,\xi}(\omega),\mu_{s+\tau,u}\big)\,\d \omega(u)\\
&=\xi+\int_s^tb_{u+\tau}\big(Y_{s+\tau,u+\tau}^{\mu,\xi}(\omega),\mu_{s+\tau,u+\tau}\big)\,\d u\\
&\quad+\int_s^t\si_{u+\tau}\big(Y_{s+\tau,u+\tau}^{\mu,\xi}(\omega),\mu_{s+\tau,u+\tau}\big)\,\d (\theta_\tau\omega)(u)\\
&=\xi+\int_s^tb_{u}\big(Y_{s+\tau,u+\tau}^{\mu,\xi}(\omega),\mu_{s,u}\big)\,\d u +\int_s^t\si_u\big(Y_{s+\tau,u+\tau}^{\mu,\xi}(\omega),\mu_{s,u}\big)\,\d (\theta_\tau\omega)(u),
\end{split}
\end{equation}
where the second identity is due to the variable substitution strategy and the third identity holds true since $b$ and $\sigma$ are $\tau$-periodic with respect to the time variable and $\mu_{s+\tau,u+\tau}=\mu_{s,u}$ for all $s\le u\le t$ owing to $\mu\in\tilde{\mathcal C}_{s,T+\tau}^\nu$.  Furthermore, we infer readily from \eqref{E4} that 
\begin{equation}\label{E11}
Y_{s,t}^{\mu,\xi}(\theta_\tau\omega)=\xi+\int_{s}^{t}b_u\big(Y_{s,u}^{\mu,\xi}(\theta_\tau\omega),\mu_{s,u}\big)\,\d u+\int_s^t\si_u\big(Y_{s,u}^{\mu,\xi}(\theta_\tau\omega),\mu_{s,u}\big)\,\d (\theta_\tau\omega)(u).
\end{equation}
Now, combining \eqref{E10}  with \eqref{E11} and taking   the strong wellposedness of \eqref{E4} into account gives that 
 for all $\mu\in\tilde{\mathcal C}_{s,T+\tau}^\nu$,
\begin{equation}\label{W12}
Y_{s+\tau,t+\tau}^{\mu,\xi}(\omega)=Y_{s,t}^{\mu,\xi}(\theta_\tau\omega).
\end{equation}
For $\mu\in\tilde{\mathcal C}_{s,T+\tau}^\nu$, we define the map $\mu\mapsto \Phi(\mu)$ as 
\begin{equation*}
(\Phi(\mu))_{s,t,\xi}=\mathscr L_{Y_{s,t}^{\mu,\xi}}.
\end{equation*}
Once we can show that $\Phi:\tilde{\mathcal C}_{s,T+\tau}^\nu\to \tilde{\mathcal C}_{s,T+\tau}^\nu$ has a unique fixed point, written as $\mu\in \tilde{\mathcal C}_{s,T+\tau}^\nu$, we then obviously have for all $\mu\in\tilde{\mathcal C}_{s,T+\tau}^\nu $ , 
\begin{equation}\label{W17}
(\Phi(\mu))_{s,t,\xi}=\mu_{s,t}=\mathscr L_{Y_{s,t}^{\mu,\xi}}.
\end{equation}
This enables us to obtain that $Y_{s,t}^{\mu,\xi}=X_{s,t}^\xi$. As a result, \eqref{W16} follows from \eqref{W12}.  Consequently, to obtain \eqref{W16}, it is sufficient to show that  $\Phi:\tilde{\mathcal C}_{s,T+\tau}^\nu\to \tilde{\mathcal C}_{s,T+\tau}^\nu$ has a unique fixed point. 
It is trivial to see that  $(\Phi(\mu))_{s,s,\xi}=\mathscr L_{Y_{s,s}^{\mu,\xi}}=\mathscr L_\xi=\nu$. On the other hand, making use  of \eqref{W12} and
taking the measure-preserving property of $\P$, we  have 
\begin{equation*}
(\Phi(\mu))_{s+\tau,t+\tau,\xi}=(\Phi(\mu))_{s,t,\xi}.
\end{equation*}
Next, by following exactly the argument of Lemma \ref{lem1}, we conclude that the map $\Phi:\tilde{\mathcal C}_{s,T+\tau}^\nu\to \tilde{\mathcal C}_{s,T+\tau}^\nu$ possesses a unique fixed point in $\tilde{\mathcal C}_{s,T+\tau}^\nu$.  
\end{proof}

With the preparations above at hand, we are in a position to complete the proof of Theorem \ref{th1}.
\begin{proof}[Proof of Theorem \ref{th1}]
Let $\phi$ be the mapping defined in \eqref{E22}.
In accordance to \cite[Proposition 3.6]{bao2022random}, to finish the proof of Theorem \ref{th1},
we need to check that   for  all $(s,t)\in\Delta$ and $\xi,\eta\in L^2(\OO\to\R^d,\F_s,\P)$,
\begin{itemize}
\item[(a)] {\bf semi-flow property}:  $\phi(s,t,\xi,\omega)=\phi(r,t,\phi(s,r,\xi,\omega),\omega)$ \qquad $\bP$-a.s.;

\item[(b)]{\bf time-shift equals omega-shift}:
$\phi(t+\tau,s+\tau,\xi,\omega)=\phi(t,s,\xi,\theta_\tau\omega)$\qquad $\bP$-a.s.;

\item[(c)] {\bf contractive property}: there exists a decreasing function $h:\R\to\R_+$  such that
$$\E|\phi(t,s,\xi)-\phi(t,s,\eta)|^2\le h(t-s)\E|\xi-\eta|^2,$$
where for some $\tau_0>0$,
\begin{equation*} 
    \lim_{s\to-\8}\sum_{j=0}^\8h(t-s+j\tau_0)=0;
\end{equation*}

\item[(d)] {\bf Ultimate boundedness  in the mean-square sense}:   there is a constant $C>0$ (independent of $\xi$) such that
$$\sup_{t\ge s}\E|\phi(t,s,\xi)|^2\le C(1+\E|\xi|^2).$$
\end{itemize}

It is easy to see that the Assumption ({\bf A'}) implies Assumption ({\bf A}) so that Lemmas \ref{lem1} and \ref{lem2} are applicable under Assumption ({\bf A'}). Obviously, (a)
and (b) follows from Lemma \ref{lem1} and Lemma \ref{lem2}, respectively. 
Additionally, (c) holds true with $h$ being a negative exponential function by taking Lemma \ref{lem:stable1} into account. Finally, (d) is available with the aid of Lemma \ref{lem3} with $\vv_0=0$ therein. Therefore, we complete the associated proof. 
\end{proof}


\section{Proof of Theorem \ref{thm2} and Theorem \ref{thm3}}
\label{sec:ProofofInteractingParticSystem:PoC:thM2}

\begin{lemma}[Well posedness and moments]\label{lem4}
Let assumptions of Theorem \ref{th1} hold. 
Then, for each fixed $N\ge1$, the IPS 
\eqref{P2} has a unique strong solution $({\bf X}_{s,t}^{N,N})_{t\ge s}$. Moreover, there exist constants $C,\ll>0$ independent of $N$ such that for any $(s,t)\in\Delta$,  $\xi^N:=\text{vec}(\xi_1,\cdots,\xi_N),\eta^N:=\text{vec}(\eta_1,\cdots,\eta_N)\in L^2(\OO\to(\R^{d})^N,\F_s,\P)$  and $i\in\mathbb S_N,$
\begin{equation}\label{P6}
\E\big|{ X}_{s,t}^{i,N,\xi^N}\big|^2\le C\big(1+\e^{-\lambda(t-s)} \E|\xi_1|^2\big),
\end{equation}
and
\begin{equation}\label{P77}
\E\big|{ X}_{s,t}^{i,N,\xi^N}-{X}_{s,t}^{i,N,\eta^N}\big|^2\le C\e^{-\lambda(t-s)} \E|\xi_1-\eta_1|^2.
\end{equation}
\end{lemma}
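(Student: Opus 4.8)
The plan is to regard the interacting particle system \eqref{P2} as a single classical, distribution-free, time-inhomogeneous SDE on $(\R^d)^N$, and to run the two arguments underlying Lemma \ref{lem3} (specialised to $\vv_0=0$) and Lemma \ref{lem:stable1} at the level of the particle average $\frac1N\sum_{i=1}^N|X_{s,t}^{i,N}|^2$, transferring the averaged estimates back to a single coordinate by exchangeability. The one structural input keeping every constant free of $N$ is that pairing two empirical clouds index by index is an admissible coupling: for ${\bf x}=\text{vec}(x_1,\dots,x_N)$ and ${\bf y}=\text{vec}(y_1,\dots,y_N)$,
\begin{equation*}
\mathbb W_2\big(\hat\mu^{\bf x},\hat\mu^{\bf y}\big)^2\le\frac1N\sum_{j=1}^N|x_j-y_j|^2=\frac1N|{\bf x}-{\bf y}|^2,\qquad \mathbb W_2\big(\hat\mu^{\bf x},\delta_{{\bf 0}}\big)^2=\frac1N\sum_{j=1}^N|x_j|^2 ,
\end{equation*}
so ${\bf x}\mapsto\hat\mu^{\bf x}$ is $\tfrac1{\sqrt N}$-Lipschitz into $(\mathscr P_2(\R^d),\mathbb W_2)$ and sends bounded sets to bounded sets. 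Consequently, writing the stacked drift as ${\bf b}_t({\bf x}):=\text{vec}\big(b_t(x_1,\hat\mu^{\bf x}),\dots,b_t(x_N,\hat\mu^{\bf x})\big)$ and the stacked diffusion as the block object whose $i$-th block is $\sigma_t(x_i,\hat\mu^{\bf x})$ driven by $W^i$, Assumption $({\bf A})$ lifts: ${\bf b}_t$ is continuous and bounded on bounded sets, \eqref{E2} yields $2\<{\bf x}-{\bf y},{\bf b}_t({\bf x})-{\bf b}_t({\bf y})\>\le 2\bar K_1(t)|{\bf x}-{\bf y}|^2$, and \eqref{E3} makes the diffusion globally Lipschitz with coefficient controlled by $\bar K_2(t)$, all with $L_{\rm loc}(\R;\R_+)$ time dependence and no per-coordinate dependence on $N$. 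Strong existence and uniqueness of $({\bf X}_{s,t}^{N,N})_{t\ge s}$ then follow from classical theory for SDEs with continuous, locally bounded, monotone coefficients (as already used for the frozen equation \eqref{E4}, see \cite{prevot2007concise}), non-explosion being supplied by the a priori bound below.

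For \eqref{P6}, apply It\^o's formula to $|X_{s,t}^{i,N}|^2$, sum over $i\in\mathbb S_N$, divide by $N$, take expectations, and kill the local-martingale part by the usual localisation. Bounding $2\<x,b_u(x,\mu)\>+\|\sigma_u(x,\mu)\|_{\rm HS}^2$ exactly as in the proof of Lemma \ref{lem3} with $\vv_0=0$ — which, after absorbing $\mathbb W_2(\hat\mu_{s,u}^{{\bf X}^N},\delta_{{\bf 0}})^2=\tfrac1N\sum_j|X_{s,u}^{j,N}|^2$, leaves the perturbed coefficient $K_1(u)+K_2(u)+\vv^{**}(1+2K_3(u))$ and a $\tau$-periodic, locally integrable inhomogeneous term $C_u$ built from $|b_u({\bf 0},\delta_{{\bf 0}})|^2$ and $\|\sigma_u({\bf 0},\delta_{{\bf 0}})\|_{\rm HS}^2$ — the average $g_{s,t}:=\tfrac1N\sum_{i=1}^N\E|X_{s,t}^{i,N}|^2$ obeys
\begin{equation*}
g_{s,t}\le g_{s,s}+\int_s^t\Big(\big(K_1(u)+K_2(u)+\vv^{**}(1+2K_3(u))\big)g_{s,u}+C_u\Big)\,\d u .
\end{equation*}
Taking $\vv^{**}$ small so that $\vv^{**}\int_0^\tau(1+2K_3(u))\,\d u$ stays below $\lambda$, Gronwall's inequality together with \cite[Lemma 3.1]{bao2022random} and the periodicity of $K_1,K_2,K_3$ gives $g_{s,t}\le C(1+\e^{-\lambda(t-s)}g_{s,s})$ (with a suitably decreased $\lambda>0$); exchangeability of $(\xi_i,W^i)_{i\in\mathbb S_N}$ yields $\E|X_{s,t}^{i,N}|^2=g_{s,t}$ and $g_{s,s}=\E|\xi_1|^2$, which is \eqref{P6}.

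For \eqref{P77}, drive two copies of \eqref{P2} from $\xi^N$ and $\eta^N$ by the same Brownian motions, set $\psi_{s,t}^i:=X_{s,t}^{i,N,\xi^N}-X_{s,t}^{i,N,\eta^N}$, apply It\^o to $|\psi_{s,t}^i|^2$, sum over $i$, divide by $N$. Here \eqref{E20} is used directly (it already packages drift monotonicity and the diffusion difference), and, since the two empirical measures are within $\tfrac1N\sum_j|\psi_{s,u}^j|^2$ of one another in $\mathbb W_2^2$, one gets that $h_{s,t}:=\tfrac1N\sum_{i=1}^N\E|\psi_{s,t}^i|^2$ satisfies $h_{s,t}\le h_{s,s}+\int_s^t\big(K_1(u)+K_2(u)\big)h_{s,u}\,\d u$. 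Gronwall, \cite[Lemma 3.1]{bao2022random} and \eqref{E23} then give $h_{s,t}\le C\e^{-\lambda(t-s)}h_{s,s}$, and exchangeability of the pairs $(\xi_i,\eta_i,W^i)_{i\in\mathbb S_N}$ converts this into \eqref{P77}.

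I expect no single hard step; the work is bookkeeping, and the only point needing genuine care is that all constants stay uniform in $N$ — achieved precisely by propagating the \emph{averaged} quantities $g_{s,t}$ and $h_{s,t}$, so that the two empirical-measure inequalities close the recursions with constant $1$, and passing to per-coordinate bounds only at the end via exchangeability. A minor caveat is that $b$ is merely one-sided Lipschitz, so the well-posedness step must invoke a monotone-coefficient existence theorem rather than a Picard/contraction scheme; the remaining computations are those already carried out in Lemmas \ref{lem3} and \ref{lem:stable1}.
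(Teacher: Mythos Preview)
Your proposal is correct and follows essentially the same approach as the paper: rewrite the IPS as a stacked SDE on $(\R^d)^N$, use the empirical-measure coupling bound $\mathbb W_2(\hat\mu^{{\bf x}^N},\hat\mu^{{\bf y}^N})^2\le \tfrac1N|{\bf x}^N-{\bf y}^N|^2$ to lift $({\bf A'})$ to the stacked coefficients with $N$-free constants, invoke \cite{prevot2007concise} for well-posedness, and then rerun the computations of Lemmas~\ref{lem3} and~\ref{lem:stable1}. The only difference is that the paper is terse on the $N$-independence of the moment constants (it points to references and to the proof of Lemma~\ref{lem5}), whereas you spell out the mechanism explicitly via the averaged quantities $g_{s,t},h_{s,t}$ and exchangeability; this is exactly the standard argument the paper is alluding to.
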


\begin{proof}
For ${\bf x}^N:=\text{vec}\big(x_1,\cdots,x_N\big)\in(\R^{d})^N$ and $t\in\R$, 
let 
\begin{equation*}
{\bf b}_t({\bf x}^N) =\text{vec}\big(b_t(x_1,\hat\mu^{{\bf x}^N}),\cdots,b_t(x_N,\hat\mu^{{\bf x}^N})\big) \quad \mbox{ and } \quad \bm{\sigma}_t({\bf x}^N\big) =\mbox{diag}\big(\sigma_t(x_1,\hat\mu^{{\bf x}^N}),\cdots,\sigma_t(x_N,\hat\mu^{{\bf x}^N})\big),
\end{equation*}
where
$ \hat\mu^{{\bf x}^N}:=\frac{1}{N}\sum_{j=1}^N\delta_{x_j}\in\mathscr P(\R^d)$. 
With the notation ${\bf b}$, $\bm{\si}$ and ${\bf X}^{N,N}$ at hand, the IPS \eqref{P2} can be reformulated as an SDE on $(\R^{d})^N$,
\begin{equation}\label{P3}
\d {\bf X}_{s,t}^{N,N}={\bf b}_t({\bf X}_{s,t}^{N,N})\d t+\bm{ \si}_t({\bf X}_{s,t}^{N,N})\,\d  {\bf W}_t^N,\quad (s,t)\in \Delta,
\end{equation}
where 
$({\bf W}_t^N)_{t\ge s}:=\text{vec}\big(W_t^1,\cdots,W_t^N\big)_{t\ge s} $
is a  $dN$-dimensional Brownian motion. 
For ${\bf x}^N, {\bf y}^N\in(\R^{d})^N$, by the fact  
$$\frac{1}{N}\sum_{j=1}^N\delta_{(x_j,y_j)}\in \mathcal C(\hat\mu^{{\bf x}^N},\hat\mu^{{\bf x}^N}),$$
we then have
\begin{equation}\label{P8}
\mathbb W_2\left(\hat{\mu}^{\mathbf{x}^N}, \hat{\mu}^{\mathbf{y}^N}\right)^{2} \leq\frac{1}{N}\sum_{j=1}^N \int_{\R^d\times\R^d}|z-\tilde{z}|^2\delta_{(x_j,y_j)}(\d z,\d \tilde{z})= \frac{1}{N} |\mathbf{x}^N-\mathbf{y}^N |^{2}.
\end{equation}
This, together with \eqref{E20} and \eqref{E21}, yields that 
for any $t\in\R$ and ${\bf x}^N, {\bf y}^N\in(\R^{d})^N$, 
\begin{equation}\label{P4}
\begin{split}
    2&\big\<{\bf x}^N- {\bf y}^N,{\bf b}_t({\bf x}^N)-{\bf b}_t({\bf y}^N)\big\>+\|\bm{ \sigma}_t({\bf x}^N)-\bm{\sigma}_t({\bf y}^N)\|_{\rm HS}^2\\
    &=\sum_{j=1}^N\big(2\big\<  x_j-  y_j,  b_t\big(x_j,\hat{\mu}^{{\bf x}^N}\big)- b_t\big(y_j,\hat{\mu}^{{\bf y}^N}\big)\big\>+\big\|  \sigma_t\big(x_j,\hat{\mu}^{{\bf x}^N}\big)-  \sigma_t\big(y_j,\hat{\mu}^{{\bf y}^N}\big)\big\|_{\rm HS}^2\big)\\
    &\le K_1(t)|{\bf x}^N-{\bf y}^N|^2+N K_2(t)\mathbb W_2\big(\hat{\mu}^{{\bf x}^N},\hat{\mu}^{{\bf y}^N}\big)^2\\
    &\le \big(K_1(t)+K_2(t)\big)|{\bf x}^N-{\bf y}^N|^2,
\end{split}
\end{equation}
and that
\begin{equation}\label{P5}
\begin{split}
\| \sigma_t({\bf x}^N)- \sigma_t({\bf y}^N)\|_{\rm HS}^2
&
\le  K_3(t)\big(|{\bf x}^N-{\bf y}^N|^2+N\mathbb W_2\big(\hat\mu^{{\bf x}^N},\hat\mu^{{\bf y}^N}\big)^2\big)
\le 2K_3(t)|{\bf x}^N-{\bf y}^N|^2.
\end{split}
\end{equation}
Thus, with the help of \eqref{P4} and \eqref{P5} saying that we have the one-sided Lipschitz and the Lipschitz property as a system in $(\bR^{d})^N$, \eqref{P3} has  a unique strong solution (see e.g. \cite[Theorem 3.1.1, p.44]{prevot2007concise}) so the IPS \eqref{P2} also has a unique strong solution. At this level, quoting only \cite{prevot2007concise} and following exactly the proofs of Lemmas \ref{lem3} and \ref{lem:stable1} yields the estimates like \eqref{P6} and \eqref{P77}, where the RHS constants $C$ depend on the system's dimension $dN$ instead of just $d$. 
To avoid a repetition of proofs, we argue that the estimations showing that the constants $C$ are independent of $N$ are well-known (see \cite{salkeld2019LDPMV-SDEs,reis2018simulation}, and critically, we present a version of them in the proof of Lemma \ref{lem5}. For this reason, we shorten this proof -- the reader can verify that the proof is not circular.
\end{proof}

\begin{lemma}[Uniform-in-time  PoC]
\label{lem5}
Assume that assumptions of Theorem \ref{th1} holds. Then, there exist constants $\vv_0\in(0,1)$ and $C>0$ (independent of $N$)
such that for all $(t,s)\in\Delta$ and 
$\xi_i\in L^{2+\vv_0}(\OO\to\R^d,\F_s,\P)$ with $ i\in\mathbb S_N,$  
\begin{align}\label{P12}
 \mathbb{E} |X_{s,t}^{i,\xi_i}-X_{s,t}^{i,N,\xi_i}|^2 \leq C\varphi(N),
\end{align}
where $X_{s,t}^{i,\xi_i}$ and $X_{s,t}^{i,N,\xi_i}$ stand  for the solutions to \eqref{P1} and \eqref{P2} with $X_{s,s}^i=\xi_i
$
and $X_{s,s}^{i,N}=\xi_i,$ respectively, and $\varphi(\cdot)$ was introduced in \eqref{P7}.
\end{lemma}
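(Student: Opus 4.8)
The plan is to run the classical coupling argument for propagation of chaos, but tracked carefully so that the contraction in the dissipative direction beats the growth coming from the empirical-measure error, uniformly over the whole half-line $[s,\infty)$ rather than on a fixed finite horizon. First I would set $\psi^i_{s,t}:=X_{s,t}^{i,\xi_i}-X_{s,t}^{i,N,\xi_i}$, couple the two systems by using the \emph{same} Brownian motions $W^i$ and the \emph{same} initial data $\xi_i$, and apply It\^o's formula to $|\psi^i_{s,t}|^2$. The drift/diffusion differences are estimated by inserting the intermediate term $b_u(X^{i,N}_{s,u},\mu_{s,u})$ (resp.\ the same for $\sigma$), so that \eqref{E20}--\eqref{E21} give, after summing over $i$ and dividing by $N$, a bound of the form
\begin{equation*}
\frac{\d}{\d u}\Big(\frac1N\sum_i\E|\psi^i_{s,u}|^2\Big)\le \big(K_1(u)+K_2(u)+\varepsilon\big)\,\frac1N\sum_i\E|\psi^i_{s,u}|^2 + C_\varepsilon\,\frac1N\sum_i\E\,\mathbb W_2\big(\hat\mu^{{\bf X}^{N}_{s,u}},\mu_{s,u}\big)^2,
\end{equation*}
where the Wasserstein term is controlled by triangle inequality through the empirical measure $\hat\mu^{{\bf X}^{i,\cdot}_{s,u}}$ of the \emph{i.i.d.} non-interacting particles $X^{i}_{s,u}$: one part is again $\frac1N\sum_i\E|\psi^i_{s,u}|^2$ and the other is $\E\,\mathbb W_2\big(\hat\mu^{X^{\cdot}_{s,u}},\mu_{s,u}\big)^2$, a genuine i.i.d.\ empirical-measure fluctuation.

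The second step is to bound that fluctuation term uniformly in time. By the Fournier--Guillin rate for the mean Wasserstein-2 error of an empirical measure of $N$ i.i.d.\ samples with a uniformly bounded $(2+\varepsilon_0)$-moment, one gets $\E\,\mathbb W_2\big(\hat\mu^{X^{\cdot}_{s,u}},\mu_{s,u}\big)^2\le C\,\varphi(N)$, with $\varphi$ exactly as in \eqref{P7} and the constant depending only on $\sup_{u\ge s}\E|X^{1}_{s,u}|^{2+\varepsilon_0}$, which is finite and independent of $s$ by Lemma \ref{lem3}. This is where the $\varepsilon_0$ and the dimension-dependent three-case form of $\varphi$ enter; it is also the step I'd expect the reader to want cited rather than reproved. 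Feeding this back, one obtains a scalar linear differential inequality
\begin{equation*}
\frac{\d}{\d u}\,G_u\le \big(K_1(u)+K_2(u)+\varepsilon'\big)\,G_u + C\,\varphi(N),\qquad G_u:=\frac1N\sum_i\E|\psi^i_{s,u}|^2,\quad G_s=0,
\end{equation*}
for a small $\varepsilon'>0$ absorbed using \eqref{E23} (exactly as in the proof of Lemma \ref{lem3}, where $\varepsilon^{**}$ played this role).

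The third step is to integrate this inequality and invoke the averaging lemma \cite[Lemma 3.1]{bao2022random} together with the periodicity and continuity of $K_1,K_2$: since $\int_0^\tau(K_1+K_2)<0$, the integrating factor decays like $\e^{-\lambda\lfloor(t-u)/\tau\rfloor}$, so $G_t\le C\,\varphi(N)\int_s^t \e^{-\tfrac{\lambda}{\tau}(t-u)}\,\d u\le C\,\varphi(N)$ with $C$ independent of $N$, $s$, $t$. Finally, since all particles are exchangeable, $\E|\psi^i_{s,t}|^2=G_t$ for every $i$, which is \eqref{P12}. The main obstacle is genuinely the \emph{uniform-in-time} control: one must make sure that the moment bound from Lemma \ref{lem3} is uniform over the initial time $s$ (it is, thanks to periodicity), that the small ``$+\varepsilon'$'' slack can always be absorbed while keeping $\int_0^\tau(K_1+K_2+\varepsilon')<0$, and that the empirical-measure rate constant does not degrade as $t\to\infty$ — all of which hinge on the structural consequences of Assumption (\textbf{A'}) plus \eqref{E23}.
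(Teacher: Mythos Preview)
Your plan matches the paper's proof, with one correction: the step where you ``insert the intermediate term $b_u(X^{i,N}_{s,u},\mu_{s,u})$'' does not work as stated, because Assumption ({\bf A'}) gives no bound on $|b_t(x,\mu)-b_t(x,\nu)|$ in terms of $\mathbb W_2(\mu,\nu)$---only the one-sided estimate \eqref{E20} on the inner product with $x-y$. The paper instead applies \eqref{E20} \emph{directly} with $(x,\mu)=(X^i_{s,u},\mu_{s,u})$ and $(y,\nu)=(X^{i,N}_{s,u},\hat\mu^{{\bf X}^{N,N}}_{s,u})$, which yields your differential inequality in one stroke (with coefficient $K_1+(1+\vv_*)K_2$ rather than $K_1+K_2+\vv'$; the small $\vv_*>0$ plays exactly the role of your slack $\vv'$, chosen so that $\int_0^\tau\big(K_1+(1+\vv_*)K_2\big)<0$).

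Everything else---the triangle inequality through the i.i.d.\ empirical measure $\bar\mu^{{\bf X}^N}_{s,u}$, Young's inequality with parameter $\vv_*$, exchangeability giving $\E\mathbb W_2(\bar\mu^{{\bf X}^N}_{s,u},\hat\mu^{{\bf X}^{N,N}}_{s,u})^2\le \E|\psi^i_{s,u}|^2$, the Fournier--Guillin rate combined with the uniform $(2+\vv_0)$-moment from Lemma~\ref{lem3}, and the final periodic integrating-factor argument---is exactly as in the paper. The paper builds the integrating factor $\e^{-\int_s^t K_*(u)\,\d u}$ into the It\^o formula rather than applying Gronwall at the end, and invokes exchangeability pointwise instead of summing-and-averaging, but these are equivalent formulations.
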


\begin{proof}
Owing to \eqref{E23}, there exists a constant $\vv_*>0$ such that
\begin{equation}\label{RR}
-\ll_*:=\int_0^\tau   K_*(u)\,\d u\in(-\8,0)\qquad \mbox{ with } \qquad    K_*(u):=K_1(u)+(1+\vv_*)K_2(u).
\end{equation}
In the sequel, we write $X_{s,t}^i$
and $X_{s,t}^{i,N}$ instead of $X_{s,t}^{i,\xi_i}$ and $X_{s,t}^{i,N,\xi_i}$, respectively, for the sake of notation convenience. 
Let $\bar{\mu}^{ {\bf  X}^N}_{s,t}$ be the empirical measure corresponding to  ${\bf X}_{s,t}^N:=\big(X_{s,t}^1,\ldots,X_{s,t}^N\big)$, i.e., 
\begin{equation*}
 \bar{\mu}_{s,t}^{{\bf X}^N}(\d z)=\bar{\mu}^{{\bf X}^N_{s,t}}(\d z)=\frac{1}{N}\sum_{j=1}^N\delta_{X_{s,t}^i}(\d z),
\end{equation*}
and set 
$\Gamma_{s,t}^{i,N}:=X_{s,t}^{i}-X_{s,t}^{i,N}$.  
Next, applying  It\^o's formula followed by making use of \eqref{E20}  yields 
\begin{align}\label{eqn:ito_propogation}
\begin{split}
  \d\Big(\e^{-\int_s^t  K_*(u)\,\d u} |\Gamma_{s,t}^{i,N}|^2\Big) &
    =\e^{-\int_s^t  K_*(u)\,\d u}\big\{-  K_*(t)|\Gamma_{s,t}^{i,N}|^2+2\big\langle \Gamma_{s,t}^{i,N},b_t\big(X_{s,t}^{i}, \mu_{s,t} \big)-b_t\big({X}_{s,t}^{i,N}, \hat{\mu}^{{\bf X}^{N,N}}_{s,t} \big)\big\rangle\\ &\quad+\|\sigma_t\big(X_{s,t}^{i}, \mu_{s,t} \big)-\sigma_t\big({X}_{s,t}^{i,N}, \hat{\mu}^{{\bf X}^{N,N}}_{s,t} \big)\|_{\rm HS}^2\big\}\,\d t+\d M_t\\
    &\le\e^{-\int_s^t K_*(u)\,\d u}\big\{-( 1+\vv_*)K_2(t)|\Gamma_{s,t}^{i,N}|^2 +K_2(t)\mathbb W_2\big(\mu_{s,t},\hat{\mu}^{{\bf X}^{N,N}}_{s,t}\big)^2\big\}\,\d t+\d M_t\\
    &\le  \e^{-\int_s^t K_*(u)\,\d u}\big\{-( 1+\vv_*)K_2(t)|\Gamma_{s,t}^{i,N}|^2\\ &\qquad\qquad\quad\quad+K_2(t)\big(\mathbb W_2\big(\mu_{s,t},\bar{\mu}_{s,t}^{{\bf X}^N}\big)+\mathbb W_2\big(\bar{\mu}_{s,t}^{{\bf X}^N},\hat{\mu}^{{\bf X}^{N,N}}_{s,t}\big)\big)^2\big\}\,\d t+\d M_t\\
    &\le   \e^{-\int_s^t K_*(u)\,\d u}\big\{-( 1+\vv_*)K_2(t)|\Gamma_{s,t}^{i,N}|^2 +(1+\vv_*)K_2(t)\mathbb W_2\big(\bar{\mu}_{s,t}^{{\bf X}^N},\hat{\mu}^{{\bf X}^{N,N}}_{s,t}\big)^2\\
    &\quad\qquad\qquad\qquad+(1+1/\vv_*)K_2(t)\mathbb W_2\big(\mu_{s,t},\bar{\mu}_{s,t}^{{\bf X}^N}\big)^2\big\}\,\d t+\d M_t ,
\end{split}
\end{align}
for some martingale $(M_t)_{t\ge s}$,
where in the second inequality we applied the triangle inequality and in the last inequality we exploited the basic inequality: $(a+b)^2\le (1+\vv)a^2+(1+1/\vv)b^2$ for all $a,b\in\R$ and $\vv>0.$ From Lemma \ref{lem3}, there exist constants $\ll,C_1>0$ and $\vv_0\in(0,1)$ such that for all $(s,t)\in\Delta$ and $\xi\in L^{2+\vv_0}(\OO\to\R^d,\F_s,\P)$, 
\begin{equation}\label{P11}
  \int_{\R^d}|x|^{2+\vv_0}\,\mu_{s,t}(\d x)\le  C_1\big(1+\e^{-\lambda(t-s)}\E|\xi|^{2+
\vv_0}\big) .
\end{equation}
Next, 
according to \cite[Theorem 1]{fournier2015rate}, we find for some constant  $C_2>0$, 
\begin{equation*}
\E\mathbb W_2\big(\mu_{s,t},\bar{\mu}_{s,t}^{{\bf X}^N}\big)^2\le C_2\varphi(N)\bigg(\int_{\R^d}|x|^{2+\vv_0}\,\mu_{s,t}(\d x)\bigg)^{1+\vv_0/2},
\end{equation*}
where $\varphi(N)$ was defined in \eqref{P7}. Whence, by taking \eqref{P11} into consideration, it follows that for some constant $C_3>0,$
\begin{equation}\label{P10}
\E\mathbb W_2\big(\mu_{s,t},\bar{\mu}_{s,t}^{{\bf X}^N}\big)^2\le C_3\varphi(N).
\end{equation}
Furthermore, in the light  of \eqref{P8}, we have 
\begin{equation}\label{P9}
\E\mathbb W_2(\bar{\mu}_{s,t}^{{\bfX}^N},\hat{\mu}^{{\bf X}^{N,N}}_{s,t})^2\le \frac{1}{N}\sum_{j=1}^N\E|\Gamma_{s,t}^{j,N}|^2=\E|\Gamma_{s,t}^{i,N}|^2, \quad i\in\mathbb S_N,
\end{equation}
where the identity holds true since, for all $i\neq j$, $\Gamma_{s,t}^{j,N}$ and $\Gamma_{s,t}^{i,N}$
share the same distribution. Now, combining \eqref{P10} with \eqref{P9} and taking $X_{s,s}^i=X_{s,s}^{i,N}=\xi_i$ into account, we deduce from \eqref{eqn:ito_propogation} that for some constant $C_4>0,$
\begin{equation*}
\begin{split}
\E|\Gamma_{s,t}^{i,N}|^2&\le (1+1/\vv_*)\int_s^t \e^{ \int_u^t  K_*(v)\,\d v} K_2(u)\E\mathbb W_2\big(\mu_{s,u},\bar{\mu}_{s,u}^{{\bf X}^N}\big)^2\,\d u\\
&\le C_4\varphi(N)\int_s^t \e^{ \int_u^t  K_*(v)\,\d v} K_2(u)\,\d u.
\end{split}
\end{equation*}
Consequently, the assertion \eqref{P12} follows from \eqref{RR} and the periodic property of $K_1,K_2.$
\end{proof}

Below, we intend to complete the proof of Theorem \ref{thm2}.

\begin{proof}[Proof of Theorem \ref{thm2}]
Owing to Lemma \ref{lem4}, the mapping $\phi^N$ defined in \eqref{1T} admits the stochastic semi-flow property:
$$
\phi^N(s,t,\xi^N,\omega)=\phi(r,t,\phi(s,r,\xi^N,\omega),\omega),
$$ for all $s\le r\le t$, $\xi^N\in L^2(\OO\to(\R^{d})^N,\F_s,\P)$ and $\omega\in\Omega$. Note that \eqref{P3}
is a classical time-inhomogeneous SDE with $\tau$-periodic coefficients ${\bf b}$ and $\bm{\si}$. Then, we conclude that 
$$
\phi^N(t+\tau,s+\tau,\xi^N,\omega)=\phi^N(t,s,\xi^N,\theta_\tau\omega),
$$
for all $(s,t)\in\Delta$ and $\xi^N\in L^2(\OO\to(\R^{d})^N,\F_s,\P)$, $\bP$-a.s. Next, in addition to 
\eqref{P6} and \eqref{P77}, via the general criteria on existence of random periodic solutions for stochastic dynamical systems (see, for instance, \cite[Proposition 3.6]{bao2022random}), the stochastic semi-flow $\phi^N$,  defined in  \eqref{1T}, has a unique pathwise random $\tau$-periodic solution $ {\bf X}^{*,N,N}_t \in L^2(\Omega\to(\R^{d})^N,\F_t,\P)$ satisfying \eqref{P14} and \eqref{P15}.
\end{proof}

\begin{proof}[Proof of Theorem \ref{thm3}]
Lemma \ref{lem5} states the PoC result of the IPS \eqref{P2} to the non-interacting particle system \eqref{P1} (which coincides with the McKean-Vlasov SDE).

It remains only to show the PoC for the IPS's pathwise random periodic solution to the corresponding one of the non-interacting particle system. 
Below, let $\xi_i\in\R^d,i\in\mathbb S_N,$
be deterministic. 
It is easy to see that for all $t\ge s  $  and $i\in\mathbb S_N$,
\begin{equation}\label{P18}
\E|X_t^{*,i}-{X}_t^{*,i,N}|^2\le 3\E|X_t^{*,i}-{ X}_{s,t}^{i,\xi_i}|^2+3\E|X_{s,t}^{i,\xi_i}-{ X}_{s,t}^{i,N,\xi_i}|^2+3\E| { X}_{s,t}^{i,N,\xi_i}-{ X}_t^{*,i,N}|^2.
\end{equation}
Due to \eqref{P17} with $X_{s,t}^\xi$ and $X_t^*$ therein replaced by $X_{s,t}^{i,\xi_i}$ and $X_t^{i,*}$, respectively, we conclude that the first term on the RHS of \eqref{P18} goes to zero as $s\to-\8$. 
Furthermore, by taking advantage of \eqref{P15}, we infer that the third term on the RHS of \eqref{P18} approaches zero as $s\to-\8$. Finally, the assertion \eqref{P19} follows by handling the second term on the RHS of \eqref{P18} via Lemma \ref{lem5} with $\xi_i\in\R^d$ being deterministic.
\end{proof}

 \section{Proofs of Theorem \ref{thm*} to Theorem \ref{thm6}}\label{sec:partial}

\begin{lemma}[Well-posedness and uniform moment bounds]\label{lem5.1}
Assume  Assumption ({\bf H}) with $K_1>2K_2$ and $\int_0^\tau\alpha_u\,\d u\in(0,\8)$. Then,  the McKean-Vlasov SDE \eqref{*X} has a unique strong solution $(X_{s,t})_{t\ge s}$ and there exists a universal constant $C>0$ such that for all  $X_{s,s}\in L^2(\OO\to\R^d,\F_s,\P)$,
\begin{equation}\label{**}
\sup_{t\ge s}\E|X_{s,t}|^2\le  C\big(1+\E|X_{s,s}|^2\big).
\end{equation}
\end{lemma}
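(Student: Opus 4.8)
The plan is to treat the two claims separately: well-posedness is obtained by reducing \eqref{*X} to a standard monotone McKean--Vlasov existence/uniqueness theorem (in the spirit of \cite[Theorem 3.3]{huang2021distribution}, which was already invoked for \eqref{E1}), while the uniform bound \eqref{**} is obtained by an It\^o/Gronwall argument that turns \eqref{X3}--\eqref{XX} into a linear differential inequality for $\E|X_{s,t}|^2$ with $\tau$-periodic coefficients and closes it via \cite[Lemma 3.1]{bao2022random}.

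For well-posedness, the first step is to rewrite \eqref{*X} as a single McKean--Vlasov SDE driven by the $2d$-dimensional Brownian motion $(B_t,W_t)$, with drift $b_t(x,\mu):=\hat b_t(x)+(\tilde b_t*\mu)(x)$ and block diffusion coefficient $\bar\sigma_t(x):=\big(\sqrt{\alpha_t}\,I_d,\ \hat\sigma_t(x)\big)\in\R^{d}\otimes\R^{2d}$. The key observation is that the additive block $\sqrt{\alpha_t}I_d$ does not depend on $(x,\mu)$, so it cancels in every difference: thus $\|\bar\sigma_t(x)-\bar\sigma_t(y)\|_{\rm HS}^2=\|\hat\sigma_t(x)-\hat\sigma_t(y)\|_{\rm HS}^2\le K_3\alpha_t|x-y|^2$ by \eqref{XX}, and \eqref{X3} controls the $\hat b$--$\hat\sigma$ part of the monotonicity estimate. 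For the convolution part I would couple $\mu,\nu$ and use \eqref{XX} to get $|(\tilde b_t*\mu)(x)-(\tilde b_t*\nu)(y)|\le K_2\alpha_t(|x-y|+\mathbb W_1(\mu,\nu))\le K_2\alpha_t(|x-y|+\mathbb W_2(\mu,\nu))$, and then Young's inequality yields
$$2\<x-y,b_t(x,\mu)-b_t(y,\nu)\>+\|\bar\sigma_t(x)-\bar\sigma_t(y)\|_{\rm HS}^2\le \bar K_1(t)\big(|x-y|^2+\mathbb W_2(\mu,\nu)^2\big)$$
with $\bar K_1(t)=C\alpha_t$ a $\tau$-periodic (hence locally integrable, using local boundedness of the periodic data) function. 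Together with the continuity and local boundedness of $\hat b,\tilde b,\hat\sigma$, this is precisely Assumption ({\bf A}) for the rewritten equation, so the cited theorem gives a unique strong solution on $[s,\infty)$. Note $K_1>2K_2$ is not needed for this part; it enters only in \eqref{**}.

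For \eqref{**}, after a localisation by $\tau_n:=\inf\{t\ge s:|X_{s,t}|\ge n\}$ to justify the computation (then $n\to\infty$ by Fatou), I would apply It\^o's formula to $|X_{s,t}|^2$ and take expectations. The diffusion bracket contributes $\big(d\,\alpha_t+\E\|\hat\sigma_t(X_{s,t})\|_{\rm HS}^2\big)\d t$. For the $\hat b$ part, put $y=\mathbf 0$ in \eqref{X3} and use the elementary pointwise bound $K_0|x|^2\I_{\{|x|\le\ell_0\}}-K_1|x|^2\I_{\{|x|>\ell_0\}}\le -K_1|x|^2+(K_0+K_1)\ell_0^2$, which gives $2\<x,\hat b_t(x)-\hat b_t(\mathbf 0)\>+\|\hat\sigma_t(x)-\hat\sigma_t(\mathbf 0)\|_{\rm HS}^2\le -2K_1\alpha_t|x|^2+2(K_0+K_1)\ell_0^2\alpha_t$; the leftover cross terms $2\<x,\hat b_t(\mathbf 0)\>$, $2\<\hat\sigma_t(x)-\hat\sigma_t(\mathbf 0),\hat\sigma_t(\mathbf 0)\>_{\rm HS}$, $\|\hat\sigma_t(\mathbf 0)\|_{\rm HS}^2$ are absorbed by Young's inequality and \eqref{XX}, costing at most $(\vv+\vv K_3\alpha_t)|x|^2$ plus a $\tau$-periodic remainder. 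For the interaction term, $|(\tilde b_t*\mathscr L_{X_{s,t}})(x)-\tilde b_t(\mathbf 0,\mathbf 0)|\le K_2\alpha_t(|x|+\E|X_{s,t}|)$ together with Young and $(\E|X_{s,t}|)^2\le\E|X_{s,t}|^2$ gives
$$2\,\E\<X_{s,t},(\tilde b_t*\mathscr L_{X_{s,t}})(X_{s,t})\>\le (4K_2\alpha_t+\vv)\E|X_{s,t}|^2+\text{($\tau$-periodic)}.$$
Collecting everything,
$$\tfrac{\d}{\d t}\E|X_{s,t}|^2\le\big(-2(K_1-2K_2)\alpha_t+\vv K_3\alpha_t+2\vv\big)\E|X_{s,t}|^2+G(t),$$
with $G$ $\tau$-periodic and locally integrable. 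Since $K_1>2K_2$ and $\int_0^\tau\alpha_u\,\d u\in(0,\infty)$, for $\vv$ small enough $\int_0^\tau\big(2(K_1-2K_2)\alpha_u-\vv K_3\alpha_u-2\vv\big)\d u>0$, so \cite[Lemma 3.1]{bao2022random} yields $\E|X_{s,t}|^2\le C\e^{-\kappa\lfloor(t-s)/\tau\rfloor}\E|X_{s,s}|^2+C$ for some $\kappa,C>0$ independent of $s$, whence \eqref{**}.

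The main obstacle is not a deep difficulty but a careful accounting. One must handle the two independent noises so that the additive $\sqrt{\alpha_t}\,\d B_t$ drops out of all difference estimates (and only contributes the constant-order term $d\,\alpha_t$ to the second-moment bracket), while the multiplicative $\hat\sigma$ is treated synchronously; and one must track constants precisely enough to see that the interaction term costs exactly $4K_2\alpha_t$ in the Lyapunov rate, so that $K_1>2K_2$ is the sharp threshold making the per-period drift $\int_0^\tau(2K_1\alpha_u-4K_2\alpha_u)\,\d u$ strictly positive. An over-generous estimate at this point would force a suboptimal (or simply insufficient) hypothesis.
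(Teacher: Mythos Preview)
Your proposal is correct and follows essentially the same route as the paper: recast \eqref{*X} as a McKean--Vlasov SDE with a monotone drift plus Lipschitz diffusion to invoke a standard well-posedness theorem, then run It\^o on $|X_{s,t}|^2$, convert \eqref{X3} at $y=\mathbf 0$ into $-2K_1\alpha_t|x|^2$ plus a bounded remainder, show the convolution term costs exactly $4K_2\alpha_t\,\E|X_{s,t}|^2$ after using $(\E|X|)^2\le\E|X|^2$, and close by a periodic Gronwall argument under $K_1>2K_2$. The only cosmetic differences are that the paper cites \cite[Theorem~2.1]{wang2018distribution} rather than \cite[Theorem~3.3]{huang2021distribution} for existence/uniqueness, and it packages all $\varepsilon$-losses as $\varepsilon\alpha_t$ so that the Gronwall weight is purely $\e^{c\int_s^t\alpha_u\,\d u}$, whereas you carry a separate $2\varepsilon$ term and invoke \cite[Lemma~3.1]{bao2022random}; your bookkeeping is in fact slightly more transparent on this point.
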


\begin{proof}
By  \eqref{XX}, we deduce  that 
for all $t\in\R$ and $x,y\in\R^d$,  $\mu,\nu\in\mathscr P_1(\R^d)$,  
 \begin{equation*} 
 \begin{split}
\<(\tilde{b}_t*\mu)(x)-(\tilde{b}_t*\nu)(y),x-y\>&=\int_{\R^d\times\R^d}\<\tilde b_t(x,z_1)-\tilde b_t(x,z_1),x-y\>\pi(\d z_1,\d z_2)\\
&\le K_2\alpha_t|x-y|\int_{\R^d\times\R^d}\big(|x-y|+|z_1-z_2|\big)\pi(\d z_1,\d z_2),
\end{split}
\end{equation*}
where $\pi\in\mathscr C(\mu,\nu)$. Thus, taking infimum w.r.t. $\pi$ on both sides yields that  for all $t\in\R$ and $x,y\in\R^d$,  $\mu,\nu\in\mathscr P_1(\R^d)$,  
\begin{equation}\label{PP}
 \begin{split}
\<(\tilde{b}_t*\mu)(x)-(\tilde{b}_t*\nu)(y),x-y\>\le K_2\alpha_t\big(|x-y|+\mathbb W_1(\mu,\nu)\big)|x-y|.
\end{split}
\end{equation}
Let $b_t(x,\mu)=\hat b_t(x)+(\tilde b_t*\mu)(x)$ for $t\in\R,x\in\R^d$ and $\mu\in\mathscr P(\R^d)$. Then, 
combining \eqref{X3}  with \eqref{PP}  enables us to derive  that for all $t\in\R$,  $x,y\in\R^d$ and $\mu,\nu\in\mathscr P_1(\R^d)$, 
 \begin{equation}\label{XXX}
 \begin{split}
&\<b_t(x,\mu)-b_t(y,\nu),x-y\>+\frac{1}{2}\|\hat \sigma_t(x)-\hat \sigma_t(y)\|_{\rm HS}^2\\
&\le \alpha_t\big((K_0+K_1)|x-y|^2\I_{\{|x-y|\le\ell_0\}}-(K_1-K_2) |x-y|^2\big) +K_2\alpha_t \mathbb W_1(\mu,\nu) |x-y|,
\end{split}
\end{equation} 
Whence,  according to   \cite[Theorem 2.1]{wang2018distribution}, the McKean-Vlasov SDE \eqref{*X} under investigation has a unique strong solution.

 By taking advantage of \eqref{XX} and \eqref{XXX}, in addition to the fact that  $\mathbb W_1\le \mathbb W_2$,  for any $\vv>0,t\in\R, x\in\R^d$ and $\mu\in\mathscr P_2(\R^d)$,
there exists a positive $\tau$-periodic function $\bb_\vv(\cdot)$ independent of $x$ such that
\begin{equation}
\begin{split}
    2\<b_t(x,\mu),x\>+\| \hat \sigma_t(x) \|_{\rm HS}^2&=2\<b_t(x,\mu)-b_t({\bf0},\delta_{\bf0}),x\>+\|\hat \sigma_t(x)-\hat \sigma_t({\bf0})\|_{\rm HS}^2\\
    &\quad+2\< b_t({\bf0},\delta_{\bf0}),x\>+2\<\hat \sigma_t(x)-\hat \sigma_t({\bf0}),\hat \sigma_t({\bf0})\>_{\rm HS}+\| \sigma_t({\bf0})\|_{\rm HS}^2\\
    &\le 2\alpha_t\big((K_0+K_1)|x|^2\I_{\{|x|\le\ell_0\}}-(K_1-K_2) |x|^2\big) +2K_2\alpha_t \mathbb W_2(\mu,\delta_{\bf0}) |x|\\
    &\quad+2\< b_t({\bf0},\delta_{\bf0}),x\>+2\<\hat \sigma_t(x)-\hat \sigma_t({\bf0}),\sigma_t({\bf0})\>_{\rm HS}+\| \hat \sigma_t({\bf0})\|_{\rm HS}^2\\
    &\le \beta_\vv(t)-2\alpha_t(K_1-3K_2/2-\vv)|x|^2+K_2\alpha_t\mathbb W_2(\mu,\delta_{\bf0})^2.
    \end{split}
\end{equation}
Consequently,
from It\^o's formula and the basic fact that  $\mathbb W_2(\mu,\delta_{\bf0})^2= \mu(|\cdot|^2)$ for $\mu\in\mathscr P_2(\R^d)$,  it follows that for all $t\ge s$ and $\vv>0,$
\begin{equation*}
\e^{2(K_1-2K_2-\vv)\int_s^t\alpha_u\,\d u}\E|X_{s,t}|^2\le \E|X_{s,s}|^2+\int_s^t\e^{2(K_1-2K_2-\vv)\int_s^u\alpha_v\,\d v}\bb_\vv(u)\,\d u. 
\end{equation*}
In particular,  choosing  $\vv=(K_1-2K_2)/2>0$ due to $K_1>2K_2$ and taking the $\tau$-periodic properties of $\alpha_\cdot$ and $\bb_\vv(\cdot)$, and $\int_0^\tau\alpha_u\,\d u\in(0,\8)$ into consideration results in the desired assertion \eqref{**}. 
\end{proof}

 The following Lemma demonstrates that the distribution flow starting from different initial distributions is weakly $\mathbb W_1$-contractive once the perturbation intensity is small (i.e., the corresponding Lipschitz constant of $\tilde b_t$ is small enough).

 \begin{lemma}[Exponential ergodicity]\label{lem5.2}
Assume   ({\bf H}) with $\int_0^\tau\alpha_u\,\d u>0$. Then,  there exist constants $C,\lambda,K_2^*>0$ such that for all $(s,t)\in\Delta$,
$K_2\in(0,K_2^*]$ and  $\mu,\nu\in\mathscr P_1(\R^d)$,
\begin{equation}\label{J*}
 \mathbb W_1\big(\mathscr L_{X_{s,t}^\mu},\mathscr L_{X_{s,t}^\nu}\big)\le C\e^{-\lambda(t-s)} \mathbb W_1( \mu, \nu),
 \end{equation}
 where $\mathscr L_{X_{s,t}^\mu}$ stands for the law of $X_{s,t}$ , the solution to \eqref{*X},  with $\mathscr L_{X_{s,s}}=\mu$.
 
 \end{lemma}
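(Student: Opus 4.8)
The plan is to run an Eberle-type coupling, exploiting the structural split of the noise in \eqref{*X}: an (asymptotic) reflection coupling on the additive part $\sqrt{\alpha_t}\,\d B_t$ and a synchronous coupling on the multiplicative part $\hat\sigma_t(\cdot)\,\d W_t$ (as announced in Remark \ref{remark on couplings}). Concretely, I would let $(X_t)_{t\ge s}$ and $(Y_t)_{t\ge s}$ solve \eqref{*X} with $\mathscr L_{X_s}=\mu$, $\mathscr L_{Y_s}=\nu$ and $(X_s,Y_s)$ an optimal $\mathbb W_1$-coupling of $(\mu,\nu)$; write $\mu_t:=\mathscr L_{X_t}$, $\nu_t:=\mathscr L_{Y_t}$, $Z_t:=X_t-Y_t$, $e_t:=Z_t/|Z_t|$, and $b_t(x,\rho)=\hat b_t(x)+(\tilde b_t*\rho)(x)$. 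Drive $Y$ by the same $W$ as $X$ and by a Lipschitz-regularised reflection of $B$ (reflection switched off on a scale $\delta$ near, and at, the origin), so that $\d Z_t=\bigl(b_t(X_t,\mu_t)-b_t(Y_t,\nu_t)\bigr)\,\d t+2\sqrt{\alpha_t}\,r_\delta(|Z_t|)\,e_te_t^{*}\,\d B_t+\bigl(\hat\sigma_t(X_t)-\hat\sigma_t(Y_t)\bigr)\,\d W_t$, where $r_\delta$ is the usual cut-off with $r_\delta\to\1$ as $\delta\downarrow 0$. The key structural input is that the radial quadratic variation of the reflection term equals $4\alpha_t r_\delta(|Z_t|)^2\,\d t$, hence is of order $\alpha_t$ away from the origin, and carries no Itô correction onto $|Z_t|$.

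Next I would introduce a bounded, strictly increasing, concave $C^2$ function $f:[0,\infty)\to[0,\infty)$ with $f(0)=0$ and $c_1 r\le f(r)\le c_2 r$ for positive constants $c_1,c_2$, built (along the lines of \cite{eberle2016reflection,wang2021exponential}, and using \cite{majka2020nonasymptotic,cattiaux2014semi} for the shape dictated by \eqref{X3}) so that $f$ is essentially linear up to scale $\ell_0$ and then has a sufficiently negative second derivative. Applying Itô's formula to $f(|Z_t|)$, controlling $\langle e_t,\hat b_t(X_t)-\hat b_t(Y_t)\rangle+\tfrac12\|\hat\sigma_t(X_t)-\hat\sigma_t(Y_t)\|_{\rm HS}^2$ by \eqref{X3}, the Itô corrections from the $\hat\sigma$-martingale by the second bound in \eqref{XX}, and the mean-field contribution by $\langle e_t,(\tilde b_t*\mu_t)(X_t)-(\tilde b_t*\nu_t)(Y_t)\rangle\le K_2\alpha_t(|Z_t|+\mathbb W_1(\mu_t,\nu_t))$ (cf.\ \eqref{PP}), I expect to arrive, after letting $\delta\downarrow 0$, at
\[
\d f(|Z_t|)\le -\lambda_0\,\alpha_t\,f(|Z_t|)\,\d t+C_0 K_2\,\alpha_t\,\mathbb W_1(\mu_t,\nu_t)\,\d t+\d M_t
\]
for some $\lambda_0>0$ and $C_0>0$ independent of $s$ (here the $K_2\alpha_t|Z_t|^2$ term from \eqref{PP} is absorbed into the dissipativity once $K_2$ is small), and a local martingale $(M_t)$.

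Taking expectations, setting $g(t):=\mathbb W_1(\mu_t,\nu_t)$ and $A(u,t):=\int_u^t\alpha_v\,\d v$, and using $\mathbb W_1(\mu_t,\nu_t)\le\E|Z_t|\le c_1^{-1}\E f(|Z_t|)$ together with $\E f(|Z_s|)\le c_2\mathbb W_1(\mu,\nu)$, I get the linear integral inequality
\[
g(t)\le C\,\e^{-\lambda_0 A(s,t)}g(s)+C K_2\int_s^t \e^{-\lambda_0 A(u,t)}\,\alpha_u\,g(u)\,\d u .
\]
Multiplying by $\e^{\lambda_0 A(s,t)}$ and applying Gronwall's lemma to $u\mapsto\e^{\lambda_0 A(s,u)}g(u)$ yields $g(t)\le C\,\e^{-(\lambda_0-C K_2)A(s,t)}g(s)$; choosing $K_2^{*}>0$ with $C K_2^{*}<\lambda_0$ keeps the exponent strictly negative for every $K_2\le K_2^{*}$. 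Finally, since $\alpha$ is $\tau$-periodic with $\int_0^\tau\alpha_u\,\d u>0$, there are $c,C'>0$ with $A(s,t)\ge c(t-s)-C'$, so $\mathbb W_1(\mathscr L_{X_{s,t}^\mu},\mathscr L_{X_{s,t}^\nu})\le g(t)\le C\e^{-\lambda(t-s)}\mathbb W_1(\mu,\nu)$ with $\lambda:=c(\lambda_0-C K_2^{*})>0$, which is \eqref{J*}, with all constants uniform over $\triangle$ thanks to periodicity.

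The main obstacle I anticipate is the simultaneous choice of $f$ (its linear scale and the size of its concavity) and of the regularisation $\delta$: one must ensure that the gain from the concavity correction against the $4\alpha_t\,\d t$ radial quadratic variation strictly beats both the non-dissipative contribution $K_0\alpha_t|Z_t|^2$ near the diagonal and the extra $\hat\sigma$-correction terms governed by $K_3$, while the genuine dissipativity $-K_1\alpha_t|Z_t|^2$ takes over for $|Z_t|>\ell_0$, and all of this with $s$-independent constants. A secondary, routine point is making the reflection coupling rigorous: well-posedness of the coupled system, the passage $\delta\downarrow0$, and localisation of $(M_t)$ before taking expectations, which follow the standard scheme of \cite{eberle2016reflection}.
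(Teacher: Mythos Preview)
Your proposal is correct and follows essentially the same route as the paper's proof: an asymptotic reflection coupling on the additive noise combined with a synchronous coupling on the multiplicative part, a concave comparison function $f$ (the paper takes the explicit choice $f(r)=c_1r+\int_0^r\e^{-c_2u}\,\d u$ with $c_1=\e^{-c_2\ell_0}$ and $c_2=2(K_0+K_2)\ell_0$), then Gronwall together with the periodicity of $\alpha$. One small slip to correct: $f$ cannot be bounded (your own constraint $c_1r\le f(r)$ already rules this out), and its concavity must act on the region $r\le\ell_0$ where the drift is non-dissipative, with $f$ becoming essentially linear for $r>\ell_0$ where the genuine dissipativity $-K_1$ takes over---the reverse of what you wrote.
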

 
 \begin{proof} 
In the sequel,    we consider the following  decoupled SDE associated with \eqref{*X}: for any  $ (s,t)\in\Delta$,
 \begin{equation} 
\d Y_{s,t}^{\nu,\mu}=\big(\,\hat b_t(Y_{s,t}^{\nu,\mu})+(\tilde b_t*\mu_{s,t})(Y_{s,t}^{\nu,\mu})\big)\,\d t+\sqrt{\alpha_t}\d B_t+\hat \si_t(Y_{s,t}^{\nu,\mu})\,\d W_t, \quad Y_{s,t}^{\nu,\mu}\sim\nu, 
\end{equation}
where $\mu_{s,t}:=\mathscr L_{X_{s,t}^\mu}$.  Due to the strong well-posedness (so weak well-posedness)  of \eqref{*X}, we have $\mu_{s,t}=\mathscr L_{Y_{s,t}^{\mu,\mu}}$. 

 Below, to avoid the singularity on the diagonal, we introduce the following cut-off function: 
 for  each $\vv>0$,  
\begin{equation}\label{*mm*}
\phi_\vv(r):=
\begin{cases}
0 &, r\in[0,\frac{5\vv}{8}], 
\\
1-384\vv^{-3}\big(\frac{1}{3}\big(r-\frac{7\vv}{8}\big)+\frac{\vv}{8}\big)\big(r-\frac{7\vv}{8}\big)^2 &, \frac{5\vv}{8}\le r\le \frac{7\vv}{8},
\\
1 &, r\ge\frac{7\vv}{8}.
\end{cases}
\end{equation}
Obviously,   $\phi_\vv\in C^1([0,\8);[0,1])$. 
With the cut-off function $\phi_\vv$ at hand,  we shall work with the following coupled SDE: 
 \begin{equation} \label{EWW*}
 \begin{cases}
\d \bar Y_{s,t}^{\mu}= b_t(\bar Y_{s,t}^{\mu},\mu_{s,t})\,\d t+\sqrt{\alpha_t}\phi_\vv(|Z_{s,t}|)^{\frac{1}{2}}\d \bar B_t\\
\qquad\qquad+\sqrt{\alpha_t}\big(1-\phi_\vv(|Z_{s,t}|)\big)^{\frac{1}{2}}\,\d\hat B_t+\hat \si_t(\bar Y_{s,t}^{\mu})\,\d W_t, \quad \bar Y_{s,s}^\mu\sim\mu\\
\d \hat Y_{s,t}^{\nu}=b_t(\hat Y_{s,t}^{\nu},\nu_{s,t})\,\d t+\sqrt{\alpha_t}\phi_\vv(|Z_{s,t}|)^{\frac{1}{2}}\Pi(Z_{s,t})\d \bar B_t\\
\qquad\qquad+\sqrt{\alpha_t}\big(1-\phi_\vv(|Z_{s,t}|)\big)^{\frac{1}{2}}\,\d \hat B_t+\hat \si_t(\hat Y_{s,t}^{\nu})\,\d W_t, \quad \hat Y_{s,s}^\nu\sim\nu,\\
\end{cases}
\end{equation}
 where $b_t(x,\mu):=\hat b_t(x)+(\tilde b_t*\mu)(x)$ for $t\in\R,x\in\R^d$ and $\mu\in\mathscr P(\R^d)$; $Z_{s,t}:=\bar Y_{s,t}^{\mu}-\hat Y_{s,t}^{\nu}$; 
 $(\bar B_t)_{t\ge s}$ and  $(\hat B_t)_{t\ge s}$,  independent of $(W_t)_{t\ge0}$,
 are mutually dependent $d$-dimensional Brownian motions;    for ${\bf n}(x)=\frac{x}{|x|}\I_{\{x\neq{\bf0}\}}$,
 \begin{align}\label{EEY}
 \Pi(x):=I_{d\times d}-2{\bf n}(x){\bf n}(x)^*\I_{\{x\neq {\bf0}\}}.
 \end{align} 
In terms of L\'{e}vy's characterization for Brownian motions, we conclude that $(\bar Y_{s,t}^\mu,\hat Y_{s,t}^\nu)_{t\ge s}$ is a coupling of $(Y_{s,t}^{\mu,\mu}, Y_{s,t}^{\nu,\nu})_{t\ge s}$. Owing to   existence of optimal couplings, in the analysis below, we shall choose the initial values $\bar Y_{s,s}^\mu$
and $\hat Y_{s,s}^\nu$ such that
\begin{align}\label{KL}
\E|\bar Y_{s,s}^\mu-\hat Y_{s,s}^\nu|= \mathbb W_1(\mu,\nu) .
\end{align}

For any $\delta>0$ and $x\in\R^d$, set $V_\delta(x):=(\delta+|x|^2)^{\frac{1}{2}}$. A direct calculation shows that for any $\vv>0,$
\begin{align}\label{EY}
\frac{x}{V_\delta(x)}\overset{\delta\to0}{\longrightarrow}{{\bf n}(x)}\quad\mbox{ and } \quad  \frac{\delta\phi_\vv(|x|)}{V_\delta(x)^3}\le \frac{\delta}{(\delta+\vv^2/4)^{\frac{3}{2}}}\overset{\delta\to0}{\longrightarrow}0.
\end{align}
Note trivially from \eqref{EWW*} that
\begin{align*}
    Z_{s,t}=&\big(  b_t(\bar Y_{s,t}^{\mu},\mu_{s,t})- b_t(\hat Y_{s,t}^{\nu},\nu_{s,t}) \big)\,\d t+2\ss{\alpha_t}\phi_\vv(|Z_{s,t}|)^{\frac{1}{2}}{\bf n}(Z_{s,t}){\bf n}(Z_{s,t})^*\,\d\bar B_t\\
    &\quad+\big(\hat \si_t(\bar Y_{s,t}^{\mu})-\hat \si_t(\hat Y_{s,t}^{\nu})\big)\,\d W_t.
\end{align*}
 Applying It\^o's formula and taking \eqref{XXX} into consideration enables us to derive that 
 \begin{align*}
    \d V_\delta(Z_{s,t}) \le&\bigg(\frac{1}{V_\delta(Z_{s,t})}\<Z_{s,t},b_t(\bar Y_{s,t}^{\mu},\mu_{s,t})- b_t(\hat Y_{s,t}^{\nu},\nu_{s,t})\>  +\frac{2\delta\alpha_t\phi_\vv(|Z_{s,t}|)}{V_\delta(Z_{s,t})^3} +\frac{1}{2V_\delta(Z_{s,t})}\|\hat \si_t(\bar Y_{s,t}^{\mu})-\hat \si_t(\hat Y_{s,t}^{\nu})\|_{\rm HS}^2\bigg)\,\d t\\
&\quad+\frac{2\ss{\alpha_t}\phi_\vv(|Z_{s,t}|)^{\frac{1}{2}}}{V_\delta(Z_{s,t}) }\<Z_{s,t}, \d\bar B_t\> +\frac{1}{V_\delta(Z_{s,t})}
    \<Z_{s,t}, (\hat \si_t(\bar Y_{s,t}^{\mu})-\hat \si_t(\hat Y_{s,t}^{\nu}) )\,\d W_t\>\\
    &\le \frac{\alpha_t}{V_\delta(Z_{s,t}) }\big((K_0+K_1)|Z_{s,t}|^2\I_{\{|Z_{s,t}|\le\ell_0\}}-(K_1-K_2)|Z_{s,t}|^2+K_2\mathbb W_1(\mu_{s,t},\nu_{s,t})|Z_{s,t}|\big)\,\d t\\
  &\quad+  \frac{2\delta\alpha_t\phi_\vv(|Z_{s,t}|)}{V_\delta(Z_{s,t})^3}\,\d t+\frac{2\ss{\alpha_t}\phi_\vv(|Z_{s,t}|)^{\frac{1}{2}}}{V_\delta(Z_{s,t}) }\<Z_{s,t}, \d\bar B_t\> \\
&\quad+\frac{1}{V_\delta(Z_{s,t})}
    \<Z_{s,t}, (\hat \si_t(\bar Y_{s,t}^{\mu})-\hat \si_t(\hat Y_{s,t}^{\nu}) )\,\d W_t\>.
 \end{align*}
 This, together with \eqref{EY}, leads to 
  \begin{equation}\label{EEE*}
  \begin{split}
    \d |Z_{s,t}| \le&\alpha_t\big(   \varphi(|Z_{s,t}|)+K_2\mathbb W_1(\mu_{s,t},\nu_{s,t}) \big)\,\d t\\
    &\quad+ 2\ss{\alpha_t}\phi_\vv(|Z_{s,t}|)^{\frac{1}{2}} \<{\bf n}(Z_{s,t}), \d\bar B_t\> + 
    \<{\bf n}(Z_{s,t}), (\hat \si_t(\bar Y_{s,t}^{\mu})-\hat \si_t(\hat Y_{s,t}^{\nu}) )\,\d W_t\>,
 \end{split}
 \end{equation}
where for any $r\ge0,$
\begin{align*}
    \varphi(r):=(K_0+K_1)r\I_{\{r\le\ell_0\}}-(K_1-K_2)r.
\end{align*}

 Define the following  function:  
 \begin{align}\label{YY}
     f(r)=c_1r+\int_0^r\e^{-c_2u}\,\d u,\quad r\ge0,
 \end{align}
 where
$  c_1:=\e^{-c_2\ell_0}$ and  $c_2:=2(K_0+K_2)\ell_0$.
 It is easy to see that  for any $r\ge0,$
\begin{align}\label{EY1}
    f'(r)=c_1+\e^{-c_2r}>0\quad \mbox{ and }\quad  f''(r)=-c_2\e^{-c_2r}<0.
\end{align}
 Once more,  for the parameter 
 $$c_*:=\frac{c_1((2(K_0+K_2))\wedge (K_1-K_2))}{1+c_1},
 $$
 applying It\^o's formula gives that 
 \begin{align*}
     \d\Big (\e^{c_*\int_s^t\alpha_r\,\d r}f(|Z_{s,t}|)\Big)\le \alpha_t\e^{c_*\int_s^t\alpha_r\,\d r}\big(&c_*f(|Z_{s,t}|)+\psi_\vv(|Z_{s,t}|)+K_2f'(|Z_{s,t}|) \mathbb W_1(\mu_{s,t},\nu_{s,t}) \big)\,\d t+\d M_{s,t},
 \end{align*}
 for some martingale $(M_{s,t})_{t\ge s}$, in which  for any $r\ge0$,
 \begin{align*}
     \psi_\vv(r):=f'(r)\varphi(r)+2f''(r)\phi_\vv(r).
 \end{align*}

It is ready to see  from \eqref{EY1}   that  for $K_1>K_2$ and some constant $c_0>0,$
\begin{align*}
    \psi_\vv(r)
    &\le\big((c_1+\e^{-c_2 r})(K_0+K_2)r-2c_2\e^{-c_2r}\big)\phi_\vv(r)\I_{\{r\le\ell_0\}}-c_1(K_1-K_2)r\I_{\{r>\ell_0\}}\\
    &\quad+(c_1+\e^{-c_2 r})(K_0+K_2)r(1-\phi_\vv(r))\I_{\{r\le\ell_0\}}\\
    &\le 2\e^{-c_2r}\big( (K_0+K_2)r- c_2 \big)\phi_\vv(r)\I_{\{r\le\ell_0\}}-c_1(K_1-K_2)r\I_{\{r>\ell_0\}}\\
    &\quad+2(K_0+K_2)r(1-\phi_\vv(r))\I_{\{r\le\ell_0\}}\\
    &\le -\frac{c_2c_1}{\ell_0}r\I_{\{r\le\ell_0\}}-c_1(K_1-K_2)r\I_{\{r>\ell_0\}}+c_0r(1-\phi_\vv(r))\I_{\{r\le\ell_0\}}\\
    &\le-c_*f(r)+c_0r(1-\phi_\vv(r))\I_{\{r\le\ell_0\}},
\end{align*}
where in the second inequality we used the fact that $c_1\le \e^{-c_2r}$ for $r\le\ell_0$,  and  the third inequality and the last inequality hold  true due to the alternative of $c_2$. Whereafter, by invoking the property of $\phi_\vv$, we have for any $r\ge0,$
\begin{align*}
    \limsup_{\vv\to0} \psi_\vv(r)\le -c_*f(r).
\end{align*}
This further implies that 
\begin{align}\label{EWW}
     \d\Big (\e^{c_*\int_s^t\alpha_r\,\d r}f(|Z_{s,t}|)\Big)\le K_2(1+c_1)\alpha_t\e^{c_*\int_s^t\alpha_r\,\d r} \mathbb W_1(\mu_{s,t},\nu_{s,t}) \,\d t+\d M_{s,t}.
 \end{align}
Recall that $(\bar Y_{s,t}^\mu,\hat Y_{s,t}^\nu)_{t\ge s}$ is a coupling of $(Y_{s,t}^{\mu,\mu}, Y_{s,t}^{\nu,\nu})_{t\ge s}$,  $\mu_{s,t}=\mathscr L_{Y_{s,t}^{\mu,\mu}}$, and $\nu_{s,t}=\mathscr L_{Y_{s,t}^{\nu,\nu}}$. Thus, in addition to $\|f'\|_\8\le 1+c_1$, 
 we deduce that 
\begin{align*}
\e^{c_*\int_s^t\alpha_r\,\d r}\mathbb W_1(\mu_{s,t},\nu_{s,t}) &\le\e^{c_*\int_s^t\alpha_r\,\d r}\E f(|Z_{s,t}|)\\
&\le \E f(|Z_{s,s}|)+K_2(1+c_1)\int_s^t\alpha_u\e^{c_*\int_s^u\alpha_r\,\d r} \mathbb W_1(\mu_{s,u},\nu_{s,u}) \,\d u.
\end{align*}
Subsequently,  the Gronwall inequality yields that 
\begin{align*}
\mathbb W_1(\mu_{s,t},\nu_{s,t}) 
&\le \E f(|Z_{s,s}|)\e^{-(c_*-K_2(1+c_1))\int_s^t\alpha_r\,\d r}.
\end{align*}
Obviously, there exists a constant $K_2^*>0$ such that  $K_1>K_2$ and $c_*>K_2(1+c_1)$
for any $K_2\in(0,K_2^*]$.  As a consequence, the desired assertion \eqref{J*} follows directly by taking \eqref{KL} into consideration. 
 \end{proof}

Below, we aim to address the issue on the uniform-in-time PoC   for the non-interacting particle systems and the IPSs associated with the McKean-Vlasov SDE \eqref{*X}. 
So far, there are considerable literature on the PoC in a  finite horizon; see the  monographs  \cite{carmona2018probabilistic} and \cite{sznitman1991topics}. In contrast, the research on the PoC in an  infinite horizon is scarce. 
Recently, there is a great progress on the uniform-in-time  PoC   for IPSs corresponding to granular media type SDEs with additive noises; see the excellent work \cite{durmus2020elementary}. In the following context, by following essentially the idea in \cite{durmus2020elementary}, we proceed to tackle the issue on the uniform-in-time PoC   for  stochastic IPSs concerned with time-inhomogenenous McKean-Vlasov SDE \eqref{*X} with 
multiplicative noise. Furthermore,  it is worthy to point out that, to handle the singularity at the zero point, the auxiliary function $\phi_\vv$ defined in \eqref{*mm*} below plays a crucial role in the  analysis to be implemented.

\begin{lemma}[Uniform-in-time PoC]\label{lem5.3}
Assume ({\bf H}) with $ \int_0^\tau\alpha_u\,\d u\in(0,\8)$, and suppose further $\E|X_{s,s}^{i,N}|^2<\8$ for each $i\in\mathbb S_N$.
Then,  there exist constants $C_1, C_2,\ll, K_2^*>0$ such that for all $K_2\in(0,K_2^*]$, $N\ge2$, and $(s,t)\in \Delta$,
\begin{align}\label{L4}
    \mathbb W_1 \Big( \mathscr L_{X_{s,t}^i}  , \mathscr L_{X_{s,t}^{i,N}}  \Big) \leq C_1\e^{-\ll (t-s)}\mathbb W_1 \Big( \mathscr L_{X_{s,s}^i}  , \mathscr L_{X_{s,s}^{i,N}} \Big)  +\frac{C_2}{\sqrt{N}},
\end{align}
where $C_1,C_2$ is independent of $(s,t)\in\Delta$.
\end{lemma}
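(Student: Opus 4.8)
The plan is to run, \emph{particle by particle}, the reflection/synchronous coupling of Lemma~\ref{lem5.2} between the $i$-th interacting particle $X^{i,N}_{s,t}$ and an i.i.d.\ copy $\bar X^i_{s,t}$ of the McKean--Vlasov solution to \eqref{*X}; the contraction coming from ({\bf H}) then propagates the initial discrepancy with an exponential factor, and the only new source of error is the replacement of the true law $\mu_{s,t}:=\mathscr L_{X_{s,t}}$ by the empirical measure $\bar\mu^N_{s,t}:=\tfrac1N\sum_{j}\delta_{X^{j,N}_{s,t}}$ inside the drift. The crucial point is that this error must be estimated at the level of a \emph{fixed} test function $\tilde b_t(\bar X^i_{s,t},\cdot)$ --- which fluctuates at rate $N^{-1/2}$ --- rather than at the level of $\mathbb W_1(\bar\mu^N_{s,t},\mu_{s,t})$, which in dimension $d\ge3$ only decays like $N^{-1/d}$; this is what yields the stated $N^{-1/2}$ rate.

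First I would build the coupling. Take $3N$ mutually independent two-sided $d$-dimensional Brownian motions $(\bar B^j,\hat B^j,W^j)_{j\in\mathbb S_N}$ and solve the coupled $(\R^d)^{2N}$-valued system whose $j$-th block $(\bar X^j,X^{j,N})$ imitates \eqref{EWW*}: $\bar X^j$ has drift $\hat b_t(\cdot)+(\tilde b_t*\mu_{s,t})(\cdot)$, $X^{j,N}$ has drift $\hat b_t(\cdot)+\tfrac1N\sum_k\tilde b_t(\cdot,X^{k,N})$, the additive parts are driven by $(\bar B^j,\hat B^j)$ through the cut-off $\phi_\vv$ of \eqref{*mm*} with the reflection matrix $\Pi$ of \eqref{EEY} attached to $X^{j,N}$ (writing $Z^j:=\bar X^j-X^{j,N}$), and the multiplicative parts $\hat\si_t(\cdot)\,\d W^j$ are synchronous. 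The cut-off renders $z\mapsto\phi_\vv(|z|)^{1/2}\Pi(z)$ Lipschitz, so this system has the one-sided Lipschitz drift / Lipschitz diffusion structure of ({\bf H}) and is strongly well posed; I would choose the initial data so that $(\bar X^j_{s,s})_j$ are i.i.d.\ with $\mathscr L_{\bar X^j_{s,s}}=\mathscr L_{X^j_{s,s}}$ and $\E|\bar X^i_{s,s}-X^{i,N}_{s,s}|=\mathbb W_1(\mathscr L_{X^i_{s,s}},\mathscr L_{X^{i,N}_{s,s}})$. The key check is that the Brownian motion actually driving each $\bar X^j$ has, jointly with $W^j$ and the other blocks, the quadratic covariation of a $2dN$-dimensional standard Brownian motion (all mixed brackets vanish since distinct blocks use independent noises), so by L\'evy's characterisation $(\bar X^j)_{j\in\mathbb S_N}$ are i.i.d.\ --- each being the strong solution of a \emph{classical} SDE with the deterministic flow $\mu_{s,t}$, hence a copy of \eqref{P1-} --- while $(X^{j,N})_j$ solves the IPS \eqref{P2-}; in particular $\mathscr L_{X^{i,N}_{s,t}}$ is the law of the $i$-th IPS particle and $(\bar X^i_{s,t},X^{i,N}_{s,t})$ couples $\mathscr L_{X^i_{s,t}}$ with $\mathscr L_{X^{i,N}_{s,t}}$.

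Next I would estimate $u(t):=\E|Z^i_{s,t}|$, which by the symmetry of the construction equals $\tfrac1N\sum_j\E|Z^j_{s,t}|$. Repeating the It\^o computation behind \eqref{EEE*}--\eqref{EWW} (reflection handles the additive noise, synchrony the multiplicative one; It\^o on $V_\delta$ then on $f$ of \eqref{YY}; $\limsup_{\vv\to0}\psi_\vv\le-c_*f$ and $\|f'\|_\8\le1+c_1$) produces the same inequality as in Lemma~\ref{lem5.2} except that $K_2\alpha_t\mathbb W_1(\mu_{s,t},\nu_{s,t})$ is replaced by the discrepancy $(\tilde b_t*\bar\mu^N_{s,t})(X^{i,N})-(\tilde b_t*\mu_{s,t})(\bar X^i)$, which I would split, using \eqref{XX}, into a term $\le K_2\alpha_t|Z^i_{s,t}|$ (absorbed into the dissipative rate, exactly as the $K_2$-term in \eqref{XXX}), a term $\le\tfrac{K_2\alpha_t}N\sum_j|Z^j_{s,t}|$ (closed by symmetry), and the centred fluctuation $\Xi^i_t:=\tfrac1N\sum_j\tilde b_t(\bar X^i_{s,t},\bar X^j_{s,t})-(\tilde b_t*\mu_{s,t})(\bar X^i_{s,t})$. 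Conditioning on $\bar X^i_{s,t}$, the $j\ne i$ summands are i.i.d.\ with mean $(\tilde b_t*\mu_{s,t})(\bar X^i_{s,t})$ and, by the linear growth of $\tilde b_t$ from \eqref{XX} and the uniform second-moment bound $\sup_{t\ge s}\mu_{s,t}(|\cdot|^2)<\8$ of Lemma~\ref{lem5.1} (valid once $K_2<K_1/2$), conditional variance $O(\alpha_t^2)$; hence $\E|\Xi^i_t|\le(\E|\Xi^i_t|^2)^{1/2}\le C\alpha_t N^{-1/2}$. Inserting this, taking expectations (the martingale drops), using $c_1u(t)\le\E f(|Z^i_{s,t}|)\le(1+c_1)u(t)$, and applying Gronwall in the clock $A(t):=\int_s^t\alpha_r\,\d r$ gives, for $K_2$ below a threshold $K_2^*$ ensuring $\gamma:=c_*-\tfrac{(1+c_1)K_2}{c_1}>0$ and for every $\vv\in(0,1)$,
\begin{equation*}
\mathbb W_1\big(\mathscr L_{X^i_{s,t}},\mathscr L_{X^{i,N}_{s,t}}\big)\le u(t)\le\tfrac{1+c_1}{c_1}\e^{-\gamma A(t)}\,\mathbb W_1\big(\mathscr L_{X^i_{s,s}},\mathscr L_{X^{i,N}_{s,s}}\big)+\tfrac{C}{\gamma\sqrt N}+O(\vv).
\end{equation*}
Letting $\vv\downarrow0$ kills the last term (the left side does not depend on $\vv$), and $\tau$-periodicity of $\alpha$ with $\int_0^\tau\alpha_u\,\d u>0$ gives $A(t)\ge\tfrac1\tau\big(\int_0^\tau\alpha_u\,\d u\big)(t-s)-\int_0^\tau\alpha_u\,\d u$, turning $\e^{-\gamma A(t)}$ into $C_1'\e^{-\ll(t-s)}$ with $\ll:=\tfrac\gamma\tau\int_0^\tau\alpha_u\,\d u>0$. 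This is \eqref{L4}.

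I expect the main obstacle to be two intertwined points. The first is making the coupling \emph{consistent}: the non-interacting copies $(\bar X^j)_j$ must stay genuinely i.i.d.\ (needed for the $N^{-1/2}$ fluctuation bound on $\Xi^i$) \emph{while} $(X^{j,N})_j$ remains an honest solution of the IPS, which is exactly what the L\'evy-characterisation argument secures and why the cut-off $\phi_\vv$ is indispensable near the diagonal. The second is pinning the $N^{-1/2}$ rate itself, i.e.\ resisting the temptation to bound the drift discrepancy by $\mathbb W_1(\bar\mu^N_{s,t},\mu_{s,t})$ (which would only give $N^{-1/d}$) and instead exploiting that, given $\bar X^i_{s,t}$, the interaction is an average of i.i.d.\ evaluations of a single function. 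The smallness of $K_2$ enters only at the final Gronwall step, to keep $\gamma>0$ and close the estimate uniformly in time.
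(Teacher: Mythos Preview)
Your proposal is correct and follows essentially the same route as the paper: the same particle-by-particle reflection/synchronous coupling with cut-off $\phi_\vv$, the same It\^o computation on $f(|Z^i|)$ leading to an analogue of \eqref{EWW}, the same three-way split of the drift discrepancy (the paper groups the two $K_2$-terms together as $\tfrac{2K_2\alpha_t}{c_1}\E f(|\bar Z^{i,N}_{s,t}|)$ via symmetry, while you absorb one into $\varphi$ first, but this is cosmetic), the same $N^{-1/2}$ bound on the centred fluctuation via conditional i.i.d.\ structure (the paper cites \cite[(28)]{durmus2020elementary} for this step), and the same Gronwall closure for $K_2$ small. The only imprecision is the phrase ``the left side does not depend on $\vv$'': $u(t)=\E|Z^i_{s,t}|$ \emph{does} depend on $\vv$ through the coupling, but $\mathbb W_1(\mathscr L_{X^i_{s,t}},\mathscr L_{X^{i,N}_{s,t}})\le u(t)$ does not, so passing $\vv\downarrow0$ on the right-hand side is still legitimate.
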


\begin{proof} 
To begin, we recall that $\phi_\vv$ and $\Pi$ were defined respectively in \eqref{*mm*} and \eqref{EEY}. 
Consider the following SDE  for all $i\in\mathbb S_N$, $(t,s)\in\Delta)$, and $\vv>0,$
\begin{equation}\label{P1--}
\begin{cases}
\d \bar X_{s,t}^{i,\vv}=\big(\,\hat{b}_t(\bar X_{s,t}^{i,\vv})+(\tilde{b}_t*\bar\mu_{s,t}^{i,\vv})(\bar X_{s,t}^{i,\vv})\big)\,\d t+\ss{\alpha_t}\phi_\vv(|\bar Z_{s,t}^{i,N,\vv}|)^{\frac{1}{2}}\,\d B_t^{*,i}\\
~~~~~~~~~~~~~~+\ss{\alpha_t}\big(1-\phi_\vv(|\bar Z_{s,t}^{i,N,\vv}|)\big)^{\frac{1}{2}}\,\d \hat B_t^{*,i}+\hat{\sigma}_t(\bar X_{s,t}^{i,\vv})\,\d W_t^i,\quad \quad \bar X_{s,s}^{i,\vv}\sim X_{s,s}^i,\\
\d \bar X_{s,t}^{i,N,\vv}
=
\big(\,\hat{b}_t(\bar X_{s,t}^{i,N,\vv})+\frac{1}{N}\sum_{j=1}^N\tilde{b}_t(\bar X_{s,t}^{i,N,\vv},\bar X_{s,t}^{j,N,\vv})
\big)\,\d t 
+\ss{\alpha_t}\phi_\delta(|\bar Z_{s,t}^{i,N,\delta}|)^{\frac{1}{2}}\Pi(\bar Z_{s,t}^{i,N,\delta})\,\d B_t^{*,i}\\
~~~~~~~~~~~~~~+\ss{\alpha_t}\big(1-\phi_\vv(|\bar Z_{s,t}^{i,N,\vv}|)\big)^{\frac{1}{2}}\,\d \hat B_t^{*,i}+\hat{\si}_t(\bar X_{s,t}^{i,N,\vv})\,\d W_t^i,\quad \quad \bar X_{s,s}^{i,N,\vv}\sim X_{s,s}^{i,N},
\end{cases}
\end{equation}
where $\bar \mu_{s,t}^{i,\vv}:=\mathscr L_{\bar X_{s,t}^{i,\vv}}$, $\bar Z_{s,t}^{i,N,\vv}:=\bar X_{s,t}^{i,\vv}-\bar X_{s,t}^{i,N,\vv},$
and $(B^{*,i}_t)_{t\ge s}$ and  $(\hat{B}^{*,i}_t)_{t\ge s}$  are  independent copies of $(B^{i}_t)_{t\ge s}$. We further assume that $(\bar X_{s,s}^{i,\vv}, \bar X_{s,s}^{i,N,\vv}), i\in\mathbb S_N,$ are i.i.d random variables with finite second moments.

In the sequel, for the sake of notation brevity, we shall write $\bar\mu_{s,t}^i$,
$\bar X_{s,t}^i$, $\bar X_{s,t}^{i,N}$ and $\bar Z_{s,t}^{i,N}$ instead of $\bar\mu_{s,t}^{i,\vv}$,
$\bar X_{s,t}^{i,\vv}$, $\bar X_{s,t}^{i,N,\vv}$ and $\bar Z_{s,t}^{i,N,\vv}$, respectively.
Set for any $(t,s)\in\Delta,$
\begin{align*}
\bar{ {\bf X}}_{s,t}^N =(\bar X_{s,t}^1,\cdots,\bar X_{s,t}^N)\quad \mbox{ and } \quad\bar{ {\bf X}}_{s,t}^{N,N}=(\bar X_{s,t}^{1,N},\cdots, \bar X_{s,t}^{N,N}).
\end{align*}
Via   L\'{e}vy's characterisation for Brownian motions, we infer that $(\bar{ {\bf X}}_{s,t}^N,\bar{ {\bf X}}_{s,t}^{N,N})_{t\ge s}$ is a coupling of $({\bf X}_{s,t}^N, {\bf X}_{s,t}^{N,N})_{t\ge s}$, determined by \eqref{P1-} and \eqref{P2-}.

By following the strategy to derive \eqref{EEE*}, we deduce that 
 \begin{equation} 
  \begin{split}
    \d |\bar Z_{s,t}^{i,N}| \le&\alpha_t\big(   \varphi(|\bar Z_{s,t}^{i,N}|)+ |\Theta_{s,t}^{i,N} |\big)\,\d t\\
    &\quad+ 2\ss{\alpha_t}\phi_\vv(|\bar Z_{s,t}^{i,N}|)^{\frac{1}{2}} \<{\bf n}(\bar Z_{s,t}^{i,N}, \d\bar B_t^{*,i}\> + 
    \<{\bf n}(\bar Z_{s,t}^{i,N}), (\hat \si_t(\bar Y_{s,t}^{\mu})-\hat \si_t(\hat Y_{s,t}^{\nu}) )\,\d W_t^i\>,
 \end{split}
 \end{equation}
where  $\varphi(r):=(K_0+K_1)r\I_{\{r\le\ell_0\}}-K_1r, r\ge0,$ and 
$$ \Theta_{s,t}^{i,N}:=(\tilde{b}_t*\bar\mu_{s,t}^{i})(\bar X_{s,t}^{i})-\frac{1}{N}\sum_{j=1}^N\tilde{b}_t (\bar X_{s,t}^{i,N},\bar X_{s,t}^{j,N}).$$

Let the function $f$ be defined as in \eqref{YY} with $c_1:=\e^{-c_2\ell_0}$ and $c_2=2(K_0+K_1)\ell_0$. Then, 
by  mimicking the procedure to obtain \eqref{EWW}, for $c_*:=\frac{K_1c_1}{1+c_1}$,
we find that 
\begin{align}\label{ER*} 
  \e^{c_*\int_s^t\alpha_r\,\d r}\E f(|\bar Z_{s,t}^{i,N}|) \le \E f(|\bar Z_{s,s}^{i,N}|) 
  +(1+c_1) \int_s^t\e^{c_*\int_s^u\alpha_r\,\d r} |\Theta_{s,u}^{i,N} |\,\d u.
 \end{align}

Set for any $i\in\mathbb S_N$ and $(s,t)\in\Delta$, 
$$\Upsilon_{s,t}^i:=\bigg(\E\Big|(\tilde{b}_t*\bar \mu_{s,t}^{i})(\bar X_{s,t}^{i})-\frac{1}{N}\sum_{j=1}^N\tilde{b}_t(\bar X_{s,t}^{i},\bar X_{s,t}^{j})\Big|^2\bigg)^{\frac{1}{2}}.$$
Recall that $(\bar X_{s,s}^{i,\vv}, \bar X_{s,s}^{i,N,\vv}), i\in\mathbb S_N,$ are i.i.d random variables. Thus, 
by H\"older's inequality, it follows from \eqref{XX} and $f(r) \ge c_1r$ for all $r\ge0 $ that 
\begin{equation}\label{L2}
\begin{split}
\E|\Theta_{s,t}^{i,N}|
&\le \Upsilon_{s,t}^i+ \frac{1}{N}\sum_{j=1}^N\E\big|\tilde{b}_t(\bar X_{s,t}^{i},\bar X_{s,t}^{j})-\tilde{b}_t(\bar X_{s,t}^{i,N},\bar X_{s,t}^{j,N})\big|\\
&\le \Upsilon_{s,t}^i+\frac{K_2\alpha_t}{N}\sum_{j=1}^N\big(\E|\bar Z_{s,t}^{i,N}|+\E|\bar Z_{s,t}^{j,N}|\big)\\
&\le  \Upsilon_{s,t}^i+\frac{2 K_2\alpha_t}{c_1}\E f(|\bar Z_{s,t}^{i,N}|),
\end{split}
\end{equation}
 where   the third inequality  holds true since $\bar Z_{s,t}^{i,N}$ and $\bar Z_{s,t}^{j,N}$ enjoy the same law.

Note that, for $i\neq j,$  $\bar X_{s,t}^i$ and $\bar X_{s,t}^j$ are i.i.d. stochastic processes so $\bar\mu_{s,t}^i=\bar\mu_{s,t}^j$. Then,  by following exactly the line to achieve \cite[(28)]{durmus2020elementary}, we infer from Lemma \ref{lem5.1} that for any $N\ge2$, and 
some constant $C^*>0,$
\begin{equation}\label{L1}
\Upsilon_{s,t}^i
\le \frac{C^*\alpha_t}{\sqrt{N}}  \big(1+\E|\bar X_{s,s}^i|^2\big).
\end{equation}
Now, plugging  \eqref{L1} back into \eqref{L2} and taking advantage of  \eqref{**} yields 
 \begin{equation*} 
  \E|\Theta_{s,t}^{i,N}|
  \le \frac{C^{*}}{\sqrt{N}}\big(1+\E|\bar X_{s,s}^i|^2\big)+\frac{2 K_2\alpha_t}{c_1}\E f(|\bar Z_{s,t}^{i,N}|),\qquad t\ge s.
 \end{equation*} 
 Thus, substituting the estimate above into \eqref{ER*} implies that 
 \begin{equation} 
\begin{split}
\e^{c_*\int_s^t\alpha_u\,\d u}\E f(|\bar Z_{s,t}^{i,N}|) 
 &\le \E f(|\bar Z_{s,s}^{i,N}|)+\frac{2K_2}{c_1}\int_s^t\alpha_u\e^{c_*\int_s^u\alpha_r\,\d r}\E f(|\bar Z_{s,u}^{i,N}|)  \,\d u\\
 &\quad+ \frac{ C_*(1+c_1)}{\sqrt{N}}(1+\E|\bar X_{s,s}^i|^2)\int_s^t\alpha_u\e^{c_*\int_s^u\alpha_r\,\d r}\,\d u.
\end{split}
\end{equation}
Apparently, there exists a constant $K_2^*>0$ such that $c_*>2K_2/c_1$ for any $K_2\in(0,K_2^*].$ As a result, with the help of $\int_0^\tau\alpha_u\,\d u\in(0,\8)$ and by choosing $(\bar X_{s,s}^{i}, \bar X_{s,s}^{i,N})$ such that $\E|\bar X_{s,s}^{i}-\bar X_{s,s}^{i,N}|=\mathbb W_1  ( \mathscr L_{X_{s,s}^i}  , \mathscr L_{X_{s,s}^{i,N}}  )$
, the Gronwall inequality yields 
the assertion \eqref{L4} for any $K_2\in(0,K_2^*].$ 
\end{proof}

\begin{remark}
\label{remark on couplings}
In terms of the construction of  \eqref{P1--}, we observe  that, concerning the additive noise, we adopt the reflection coupling when the distance between the marginal  processes is greater than $\vv,$ employ the synchronous coupling provided  that the distance between the marginal processes is smaller than $\vv/2$, and exploit, in between,  the mixture of the reflection coupling and the synchronous coupling.  With regarding to the multiplicative noise, we  utilise all along  the synchronous coupling. 

In Lemma \ref{lem5.3}, we achieve the optimal decay rate (i.e., $N^{-\frac{1}{2}}$), which is independent of the dimension $d,$ w.r.t. the particle number once one part of the drift part is written as a convolution type and the initial distribution of IPS enjoy finite second moments.  Indeed, for the issue on the uniform-in-time PoC, the McKean-Vlasov SDEs can be much more general by taking advantage of \cite[Theorem 1]{fournier2015rate}. For such setting, the convergence rate w.r.t. the particle number is dependent on the dimension $d$ and becomes more and more worse when the dimension increases. Based on this point of view, for the drift term we prefer the convolution type rather than the general counterpart. 
\end{remark}

With the previous lemmas at hand, we are in position to complete the proofs of Theorems \ref{thm*}, \ref{thm4} and \ref{thm6} . First of all, we complete the proof of Theorem \ref{thm*}.

\begin{proof}[Proof of Theorem \ref{thm*}]
To complete the proof of Theorem \ref{thm*} we follow the approach of \cite[Proposition 2.1]
{bao2022random} which involves establishing: (a) semi-flow property; (b) time-shift equals omega-shift; (c) $L^1$-Wasserstein contraction of $(X_t)_{t\ge s}$ solving \eqref{*X}; (d) uniform $L^2$-moment estimates in an infinite horizon, respectively. 
Obviously, 
(a) and (b) follow directly from Lemma \ref{lem3} and Lemma  \ref{lem1} respectively. 
(c) is available by invoking Lemma \ref{lem5.2}. 
Lastly, (d) is verified by appealing to Lemma \ref{lem5.1}.
    This concludes the proof.
\color{black}
\end{proof}

Next, we intend to finish the proof of Theorem \ref{thm4}.
\begin{proof}[Proof of Theorem \ref{thm4}]
For ${\bf x}^N:=\big(x_1,\cdots,x_N\big)\in\R^{dN}$ and $t\in\R$, 
let 
\begin{equation*}
  {\bf b}_t({\bf x}^N) =\big(\,\bar b_t(x_1,{\bf x}^N),\cdots,\bar b_t(x_N,{\bf x}^N)\big) \quad \mbox{ and } \quad  {\bm{ \sigma}}_t({\bf x}^N\big) =\mbox{diag}\big(\hat \sigma_t(x_1),\cdots,\hat \sigma_t(x_N)\big),
\end{equation*}
where
$$  b_t(x_i,{\bf x}^N):=\hat b_t(x_i)+\frac{1}{N}\sum_{j=1}^N\tilde b_t(x_i,x_j).$$
Then, by invoking \eqref{X3} and \eqref{XX}, we deduce that 
\begin{equation*} 
\begin{split}
\nonumber
\<{\bf x}^N-{\bf y}^N, {\bf b}_t({\bf x}^N)- {\bf b}_t({\bf y}^N)\>
 +\frac{1}{2}\|{\bm{\sigma}}_t({\bf x}^N)-{\bm{\sigma}}_t({\bf y}^N)\|_{\rm HS}^2
&\le \alpha_t (K_0+K_1) \sum_{i=1}^N |x_i-y_i|^2\I_{\{|x_i-y_i|\le\ell_0\}}\\
&\quad-(K_1-2K_2)\alpha_t|{\bf x}^N-{\bf y}^N|^2.
\end{split}
\end{equation*}
Then, by applying It\^o's formula, for all $\xi^N=(\xi_1,\cdots,\xi_N)\in L^2(\OO\to(\R^{d})^N,\F_s,\P)$, there exists a constant $C_0(\xi)>0$ such that 
\begin{equation}\label{p*}
\sup_{t\ge s}\mathbb W_1\big(\mathscr L_{X_{s,t}^{i,N}},\delta_{{\bf0}}\big)\le C_0(\xi)  , 
\end{equation}
once $K_1>2K_2$. 
Next, by repeating the argument of Lemma \ref{lem5.2}, we find that there exists a constant $C^*,\ll,K_2^*>0$ such that for all $K_2\in(0,K_2^*]$,
\begin{equation}\label{p**}
   \mathbb W_1\Big(\mathscr L_{{\bf X}^{\xi^N}_{s,t}},\mathscr L_{{\bf X}^{\eta^N}_{s,t}}\Big) \le C^*\e^{-\ll(t-s)}\mathbb W_1\big(\mathscr L_{ \xi^N },\mathscr L_{ \eta^N }\big).
\end{equation}
Thus, combining \eqref{p*} with \eqref{p**} and applying \cite[Proposition 2.1]{bao2022random}, we finish the proof of Theorem \ref{thm4}.
\end{proof}

At last, we address the proof of Theorem \ref{thm6}.
\begin{proof}[Proof of Theorem \ref{thm6}]
The proof of Theorem \ref{thm6}
is complete by invoking the triangle inequality and taking Theorems \ref{thm*} and \ref{thm4}
as well as Lemma \ref{lem5.3} into consideration. 
\end{proof}

\section*{Acknowledgements}

G. dos Reis acknowledges partial support from the FCT – Fundação para a Ciência e a Tecnologia, I.P., under the scope of the projects UIDB/00297/2020 (\url{https://doi.org/10.54499/UIDB/00297/2020}) and UIDP/00297/2020 (\url{https://doi.org/10.54499/UIDP/00297/2020}) (Center for Mathematics and Applications, NOVA Math).

The research of Jianhai Bao is supported by the National Key R\& D Program of China (2022YFA1006004) and 
the National Natural Science Foundation of China (No. 12071340). 

Yue Wu acknowledges support from the Lower Saxony – Scotland Tandem Fellowship Programme funded by the Ministry of Science and Culture of Lower Saxony (MWK).







\end{document}